\documentclass[12pt]{article}
\setlength{\oddsidemargin}{0in} \setlength{\evensidemargin}{0in}
\setlength{\textheight}{9.4in} \setlength{\textwidth}{6.5in}
\setlength{\topmargin}{-0.7in}
\usepackage{amsthm}
\usepackage{amsfonts}
\usepackage{mathrsfs}
\usepackage{amsmath}
\usepackage{galois}
\usepackage[numbers,sort&compress]{natbib}
\usepackage{dsfont}
\usepackage{color}
\usepackage[perpage,symbol*]{footmisc}
\addtolength{\bibsep}{-0.8em}
\setlength{\bibhang}{9em}
\newtheorem{theorem}{Theorem}[section]
\newtheorem{proposition}[theorem]{Proposition}
\newtheorem{lemma}[theorem]{Lemma}
\newtheorem{remark}[theorem]{Remark}
\newtheorem{corollary}[theorem]{Corollary}
\newtheorem{definition}[theorem]{Definition}

\numberwithin{equation}{section}
\allowdisplaybreaks[4]
\begin{document}
\date{}
\pagestyle{plain}
\title{Distribution Flows Associated with Positivity Preserving Coercive Forms }
\author{Xian Chen$^1$, \   Zhi-Ming Ma$^2$\thanks{The corresponding author.} ,\  Xue Peng$^3$\thanks{Chen's email:  chenxian@amss.ac.cn; Ma's email: mazm@amt.ac.cn; Peng's email: pengxuemath@scu.edu.cn}
  \ \\ $^1$Xiamen University, Xiamen, 361005, China\\
\   $^2$AMSS, University of Chinese Academy of Sciences, Beijing, 100190, China\\
$^3$Sichuan University, Chengdu, 610064, China}

\maketitle \underline{}

\begin{abstract}
For a given quasi-regular positivity preserving coercive form, we construct a family of ($\sigma$-finite) distribution flows associated with the semigroup of the form. The canonical cadlag process equipped with the distribution flows behaves like a strong Markov process. Moreover, employing distribution flows we can construct optional measures and establish Revuz correspondence between additive functionals and smooth measures. The results obtained in this paper will enable us to perform a kind of stochastic analysis related to positivity preserving coercive forms.
\end{abstract}

\begin{footnotetext}
{  $^2$ Supported by National Center for Mathematics and Interdisciplinary Sciences (NCMIS), and NSFC project (11526214).
}
\end{footnotetext}

\vskip 0.2 in \noindent{\bf Keywords.}  positivity preserving coercive form;  $h$-associated process;
distribution flow; strong Markov property; positive continuous additive functional; Revuz correspondence; optional measure.

\vskip 0.2 in \noindent {\bf Mathematics Subject Classification.} Primary 31C25, 47D03; 
 Secondary 60J40, 60J45, 31C15.
\section{Introduction}
A positivity preserving coercive form is a coercive closed form with which the associated semigroup $(T_{t})_{t\geq0}$ is positivity preserving, that is, $f\geq 0$ implies $T_tf\geq 0.$ Positivity preserving semigroups as well as positivity preserving coercive forms appear in various research of mathematics and physics, and have been intensively studied by several authors. As early as in the initial of 50s of the last century, W. Feller has studied positivity preserving semigroups (\cite{Fe52, Fe54}). In 70s of the last century, A. Klein and L.J. Landau studied standard positivity preserving
semigroups via path space techniques (\cite{KL75}), and B. Simon studied positivity preserving semigroups arising from mathematical physics (\cite{Si73,Si77,Si79}). The study of positivity preserving coercive forms may be traced back to A. Beurling and J. Deny (\cite{BD58}). Afterwards J. Bliedtner (\cite{Bl71}) and A. Ancona (\cite{An74, An76}) studied intensively the subject of positivity preserving coercive forms. For more recent literature concerning positivity preserving semigroups and coercive forms,
see e.g. \cite{Sc99},\cite{Hi00},\cite{Ar08},\cite{GMNO15},\cite{De15},\cite{De16}, \cite{BCR16}, and the references therein.

When a positivity preserving semigroup $(T_{t})_{t\geq0}$ is Markovian (which is referred to as ``standard" in \cite{KL75}), or more generally sub-Markovian, i.e. $T_t1\leq 1,$ one may study it via path space techniques. Under some additional conditions, e.g. Feller property, quasi-regularity, etc.,  one may associate a nice Markov process with the underlying sub-Markovian semigroup, and thus can study the semigroup by means of stochastic analysis. When $(T_{t})_{t\geq0}$ is merely positivity preserving but not sub-Markovian, one can not directly associate with it a Markov process. In \cite{MR95}, the authors extended and completed the previous work by $h$-associating a nice Markov process with a (not necessarily sub-Markovian) positivity preserving coercive form. They implemented $h$-transformation with a strictly $\alpha$-excessive function $h$ to transfer the underlying form into a semi-Dirichlet form, and proved that a positivity preserving coercive form is $h$-associated with a nice Markov process if and only if the $h$-transform of the form is quasi-regular in the sense of \cite{MR92, MOR95}. In \cite{HMS11} the authors developed further the work of \cite{MR95} by showing that it is possible to $h\hat{h}$-associating with a pair of nice Markov processes for a quasi-regular positivity preserving coercive form, by implementing simultaneously $h$-transform with an $\alpha$-excessive function $h$ and $\hat{h}$-transform with an $\alpha$-coexcessive function $\hat{h}$.

It is evident that $h$-associated process depends on $h$ and hence a positivity preserving coercive form may have many different $h$-associated Markov processes. Inspired by the work of pseudo Hunt processes introduced in \cite{Oshima13}, in this paper we shall construct a family of ($\sigma$-finite) distribution flows on path space, associated with a given positivity preserving coercive form. The family of distribution flows is independent of the choice of $h$, and the canonical cadlag process equipped with the distribution flows behaves like a strong Markov process. Therefore  we can perform a kind of stochastic analysis directly related to a positivity preserving coercive form. In details we obtain the following results in this paper.

Let $(\mathcal{E},D(\mathcal{E}))$ be a quasi-regular positivity preserving coercive form on $L^2(E;m),$ where $E$ is a metrizable Lusin space. Let $(\mathcal{F}^0_t)_{t\geq 0}$ be the natural filtration generated by the
$E$-valued cadlag coordinate process. In Section \ref{section-distributionFlow} we define a $\sigma$-finite measure $\overline{Q}_{x,t}$ on $\mathcal{F}^0_t$ for each $t\geq 0$ and $x\in E$ via inverse $h$-transformation. We call $\overline{Q}_{x,t}$ a ($\sigma$-finite) distribution up to time $t$ and call  $(\overline{Q}_{x,t})_{t\geq 0}$ a ($\sigma$-finite) distribution flow associated with $(\mathcal{E},D(\mathcal{E}))$ (see Definition \ref{def-Qtx}). Then we prove that $\overline{Q}_{x,t}$ is generated by the family of quasi-continuous kernels of $T_s, 0\leq s\leq t$ (see Lemma \ref{lem:distribution}), and  $\overline{Q}_{x,t}$ is independent of the choice of the $\alpha$-excessive function $h$ (see Theorem \ref{thm-independent}). More over, we show that  $\overline{Q}_{x,t}$ is in general different from $\overline{Q}_{x,s}$ if $s\neq t,$ and there is in general no single measure on $\mathcal{F}^0_{\infty}$ such that its restriction to $\mathcal{F}^0_t$ is $\overline{Q}_{x,t}$ (see Theorem \ref{thm-feature}). Nevertheless, the canonical process $(X_t)_{t\geq 0}$ equipped with $(\mathcal{F}^0_t)_{t\geq 0}$ and the family of distribution flows $(\overline{Q}_{x,t})_{t\geq 0}, ~x \in E,$ enjoys also Markov property (see Theorem \ref{thm-Pseudo-Markov}).

 For deriving a strong Markov property of $(X_t)_{t\geq 0}$ in the environment of distribution flows, in Section \ref{secaugmentfiltration} we augment $(\mathcal{F}^0_t)_{t\geq 0}$ to obtain a filtration $(\mathcal{M}_t)_{t\geq 0}$ which is right continuous and universally measurable, and hence suitable to accommodate stopping times. Because there is no single measure on $\mathcal{F}^0_{\infty}$ which could be used for completion, hence the procedure of the augmentation is new and nontrivial (see (\ref{ali-Mmut}) and the proof of Theorem \ref{thm-Mu right continuous}).

 In Section \ref{Sect:strongMarkov} we study stopping times and strong Markov property related to the filtration $\{\mathcal{M}_t\}$. Denote by $\mathcal{T}$ all the $\{\mathcal{M}_t\}$-stopping times. Expanding the definition of $\overline{Q}_{x, t},$ we define a ($\sigma$-finite) measure $\overline{Q}_{x,\sigma}$ on $\mathcal{M}_{\sigma}$ for each $\sigma \in \mathcal{T}$ and $x\in E.$   We call $(\overline{Q}_{x,\sigma})_{\sigma \in \mathcal{T}}$ an  expanded ($\sigma$-finite) distribution flow associated with $(\mathcal{E},D(\mathcal{E}))$ (see Definition \ref{def-exdistribution} and Proposition \ref{pro-exdistribution}). Then we are able to show that $(X_t)_{t\geq 0}$ equipped with the filtration $(\mathcal{M}_t)_{t\geq 0}$ and the family of expanded distribution flows
$(\overline{Q}_{x,\sigma})_{\sigma \in \mathcal{T}},~x\in E,$ enjoys strong Markov property (see Theorem \ref{thm-strong pseudo Markov}).

 In the area of Dirichlet forms, positive continuous additive functionals (PCAFs), together with the Revuz correspondence between PCAFs and smooth measures, constitute an active subject and play important roles in stochastic analysis. In Section \ref{Sec-PCAF} we derive an analogy in the context of positivity preserving coercive forms. To handle the problem that there is no single measure on the path space, in Subsection \ref{Sec-PCAF1} we introduce a notion of $\mathcal{O}$-measurable positive continuous additive functionals ($\mathcal{O}$-PCAFs), in which the defining set is an optional set rather than a set in $\mathcal{M}_{\infty}$ (see Definition \ref{def-PCAF2}). $\mathcal{O}$-PCAFs in the framework of positivity preserving coercive forms play a similar role as PCAFs do in the framework of Dirichlet forms. In particular, we establish a one to one correspondence between $\mathcal{O}$-PCAFs and smooth measures (Revuz correspondence) in the subsequent subsections. In Subsection \ref{Sec-PCAF2} we establish the Revuz  correspondence by means of $h$-associated processes (see Theorem \ref{thm-revuz cor}). Then in Subsection \ref{independence} we prove that the Revuz correspondence is in fact independent of the $\alpha$-excessive function $h$ (see Corollary \ref{Revuzindependence}). To this end we introduce optional measure $\mathbb{Q}_{x}^{A}(\cdot)$ generated by an $\mathcal{O}$-PCAF $A$. Employing the structure of optional $\sigma$-field and the structure of predictable $\sigma$-field, we proof that $\mathbb{Q}_{x}^{A}(\cdot)$ is independent of the the $\alpha$-excessive function $h$ (see Theorem \ref{Qindependenth} and its proof). We believe that the concept of optional measure $\mathbb{Q}_{x}^{A}(\cdot)$ will have interest by its own and will be useful in the further study of stochastic analysis related to positivity preserving coercive forms.

In Section \ref{section: Preliminaries} below we review some results concerning $h$-associated processes as preliminaries.

\section{$h$-associated Processes}
\label{section: Preliminaries}

As a preparation and also for the convenience of the readers, in this section we review some previous results concerning $h$-associated processes of positivity preserving coercive forms.
Let $E$ be a metrizable Lusin space with its Borel $\sigma$-algebra ${\cal B}(E),$ and $m$ a $\sigma$-finite positive measure on $(E,\mathcal{B}(E)).$  We denote by $(,)_{m}$ the inner product of the (real) Hilbert space $L^2(E;m).$  For a bilinear form  $\mathcal{E}$  with domain $D(\mathcal{E})$ in $L^2(E;m),$  we write $\mathcal{E}_{\alpha}(u,v):=\mathcal{E}(u,v)+\alpha(u,v)_{m}$ for $\alpha>0.$ Write  $\hat{\mathcal{E}}(u,v):=\mathcal{E}(v,u),$ $\tilde{\mathcal{E}}(u,v):=\frac{1}{2}(\mathcal{E}(u,v)+\mathcal{E}(v,u))$, and $\check{\mathcal{E}}(u,v):=\frac{1}{2}(\mathcal{E}(u,v)-\mathcal{E}(v,u))$  for  $u,v\in D(\mathcal{E})$. Recall that a bilinear form
$(\mathcal{E},D(\mathcal{E}))$  is called a coercive closed form if its domain $D(\mathcal{E})$ is dense in $L^2(E;m)$ and the following conditions (i) and (ii) are satisfied.

(i) $(\tilde{\mathcal{E}},D(\mathcal{E}))$ is non-negative definite and closed on $L^2(E;m)$.

(ii) (Sector condition). There exists a constant $K>0$ (called continuity constant) such that
$|\mathcal{E}_1(u,v)|\leq K\mathcal{E}_1(u,u)^{\frac{1}{2}}\mathcal{E}_1(v,v)^{\frac{1}{2}}$ for all $u,v\in D(\mathcal{E})$.

We adopt the following definition from \citep{MR95}.
\begin{definition}
(cf. \cite[Definition 1.1]{MR95})
A coercive closed form $(\mathcal{E},D(\mathcal{E}))$ on $L^2(E;m)$ is called a positivity preserving coercive form , if for all $u\in D(\mathcal{E})$, it holds that $u^+\in D(\mathcal{E})$ and $\mathcal{E}(u,u^+)\geq 0$.
\end{definition}
  A semi-Dirichlet form on $L^2(E;m)$ is always a positivity preserving coercive form (cf. \cite[Remark 1.4 (iii) ]{MR95}). Recall that a coercive closed form $(\mathcal{E},D(\mathcal{E}))$ is called a semi-Dirichlet form  if for all $u\in D(\mathcal{E})$, it holds that $u^+\wedge 1\in D(\mathcal{E})$ and $\mathcal{E}(u+u^+\wedge 1,u-u^+\wedge1)\geq 0$.

For a positivity preserving coercive form $(\mathcal{E},D(\mathcal{E})),$ we denote
by $(T_{t})_{t\geq0}$ and $(G_{\alpha})_{\alpha>0}$ (resp.
$(\hat{T}_{t})_{t\geq0}$ and $(\hat{G}_{\alpha})_{\alpha>0}$)
the semigroup and resolvent (resp. co-semigroup and co-resolvent)
associated with $({\cal E},D({\cal E}))$. It is known that
a coercive form $(\mathcal{E},D(\mathcal{E}))$ is positivity preserving if and only if $(T_{t})_{t\geq0}$ and $(G_{\alpha})_{\alpha>0}$
 are  positivity preserving, that is, $f\geq 0$ implies $T_tf\geq 0$ and $ \alpha G_{\alpha}f\geq 0$  for all $t> 0,\alpha>0$  (cf. \cite [Theorem 1.5, Theorem 1.7] {MR95}). $(\mathcal{E},D(\mathcal{E}))$ is a semi-Dirichlet form if and only if $(T_{t})_{t\geq0}$ and $(G_{\alpha})_{\alpha>0}$ are sub-Markovian, in the sense that  $0\leq f\leq 1$ implies $0\leq T_tf,~ \alpha G_{\alpha}f\leq 1$ for all $t,\alpha>0$ (cf. e.g. \cite[Section I.4]{MR92} or \cite[Section 1.1]{Oshima13}).

  Let $\alpha\ge 0.$
 Recall that a function $h\in L^2(E;m)$ is called $\alpha$-excessive (resp. $\alpha$-coexcessive)
    if $h\ge 0$ and $e^{-\alpha t}T_th\leq h$ (resp. $e^{-\alpha t}\hat{T}_th\leq h$) for all $t>0$.  Given a strictly positive  $\alpha$-excessive function $h$, the conventional $h$-transform of $(T_t)_{t\geq0}$ is defined as
 \begin{equation}\label{h^2}
 T_t^hf:=h^{-1}e^{-\alpha t}T_t(hf), ~~\forall ~ t\geq0, ~ f\in L^{2}(E;h^2\cdot m).
 \end{equation}
By the discussion of \cite{MR95}, we know that $(T^h_t)_{t\geq0}$ is the semigroup associated with the semi-Dirichlet form $(\mathcal{E}^h_{\alpha},D(\mathcal{E}^h))$ on $L^{2}(E;h^2\cdot m),$ where $(\mathcal{E}^h,D(\mathcal{E}^h))$ is the $h$-transform of $(\mathcal{E},D(\mathcal{E}))$ (\cite[Definition 3.1]{MR95}) and $\mathcal{E}^h_{\alpha}(u,v)=\mathcal{E}^h(u,v)+ \alpha (u,v)_{h^2\cdot m}$  for $u,v \in D(\mathcal{E}^h).$ The following remark can be checked directly.
\begin{remark}\label{lemma-coexcessive by h trans}
Let $\gamma\geq\alpha$, then $g\in D(\mathcal{E})$ is a $\gamma$-excessive function (resp. $\gamma$-coexcessive function) of $(\mathcal{E},D(\mathcal{E}))$ if and only if
$\frac{g}{h}\in D(\mathcal{E}^h)$ is a $(\gamma-\alpha)$-excessive function (resp. $(\gamma-\alpha)$-coexcessive function) of $(\mathcal{E}^h,D(\mathcal{E}^h))$.
\end{remark}

In the remainder of this section we assume that $(\mathcal{E},D(\mathcal{E}))$ is a quasi-regular positivity preserving coercive form in the sense of \cite[Definition 4.9]{MR95}. Then every element of $D(\mathcal{E})$ admits an $\mathcal{E}$-quasi-continuous $m$-version. Below for the involved terminologies we refer to e.g. \cite{MR92} and \cite{MR95}. Let
$\mathbf{M}:=(\Omega,\mathcal{F},(\mathcal{F}_t)_{t\geq0},(X_t)_{t\geq0},
(P_x)_{x\in E_\Delta})$ be a right process with state space $E$ and transition semigroup $(P_t)_{t\geq0}.$ Here and henceforth $E_{\Delta}:= E \cup \{\Delta\}$ where $\Delta$ is an extra point of $E$ serving as  the cemetery of the process. We shall always make the convention that $f(\Delta)=0$ for any function $f$ originally defined on $E$. Let $\alpha>0.$ By the quasi-regularity we can take a strictly positive $\mathcal{E}$-quasi-continuous $\alpha$-excessive function  $h \in D(\mathcal{E})$.  We write
\begin{equation}\label{Qt}
Q_tf(x):= h(x)e^{\alpha t}P_t(h^{-1}f)(x)=h(x)E_{x}[\frac{e^{\alpha t}f(X_t)}{h(X_t)}; t<\zeta]
\end{equation}
provided the right hand side makes sense. We shall sometimes use
$Q_t(x,\cdot)$ to denote the kernel determined by (\ref{Qt}). The concept of $h$-associated process was introduced in \cite{MR95} which we restate in the definition below.
\begin{definition}\label{h-associate}
(cf. \cite[Definition 5.1]{MR95})
 Let $Q_tf$ be defined by (\ref{Qt}).

(i) We say that $(\mathcal{E},D(\mathcal{E}))$ is $h$-associated with $\mathbf{M},$ or $\mathbf{M}$ is an $h$-associated process of $(\mathcal{E},D(\mathcal{E})),$ if $Q_tf$ is an $m$-version of $T_tf$ for any $f\in L^2(E;m).$

(ii) We say that $(\mathcal{E},D(\mathcal{E}))$ is properly $h$-associated
with $\mathbf{M},$  or $\mathbf{M}$ is a properly $h$-associated process of $(\mathcal{E},D(\mathcal{E})),$  if in addition $Q_tf$ is an $\mathcal{E}$-quasi-continuous version of $T_tf$ for any $f\in L^2(E;m).$
\end{definition}
The following result was proved in \cite{MR95}.
\begin{proposition}\label{h-process} \cite[Theorem 5.2]{MR95}  Let $h \in D(\mathcal{E})$ be a strictly positive $\alpha$-excessive function. Then $(\mathcal{E},D(\mathcal{E}))$ is properly $h$-associated with an $m$-special standard process ${\bf M}$ if and only if $(\mathcal{E},D(\mathcal{E}))$ is quasi-regular. In this case $h$ is always $\mathcal{E}$-quasi-continuous.
\end{proposition}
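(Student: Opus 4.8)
The plan is to reduce everything to the semi-Dirichlet case via the $h$-transform. Recall (as noted after (\ref{h^2})) that $(\mathcal{E}^h,D(\mathcal{E}^h))$ is a semi-Dirichlet form on $L^2(E;h^2\cdot m)$ with semigroup $(T^h_t)_{t\ge0}$, and that for semi-Dirichlet forms one has the characterisation (\cite{MR92}, \cite{MOR95}): such a form is quasi-regular if and only if it is properly associated with an $m$-special standard process. The first task is to set up a dictionary between the quasi-notions of $\mathcal{E}$ and those of $\mathcal{E}^h$. Assume for the moment that $h$ has an $\mathcal{E}$-quasi-continuous version, again written $h$, which is strictly positive $\mathcal{E}$-quasi-everywhere (so that $\{h=0\}$ is $\mathcal{E}$-exceptional); this will be justified in each of the two implications. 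Multiplication by $h$ and by $h^{-1}$ are then mutually inverse bijections $D(\mathcal{E})\leftrightarrow D(\mathcal{E}^h)$, bicontinuous for the $\mathcal{E}_1$- and $\mathcal{E}^h_1$-norms, and both $h,h^{-1}$ are quasi-continuous; from this one checks that a sequence of closed sets is an $\mathcal{E}$-nest iff it is an $\mathcal{E}^h$-nest, that $\mathcal{E}$-exceptional sets and $\mathcal{E}^h$-exceptional sets coincide, and that $u$ is $\mathcal{E}$-quasi-continuous iff $u/h$ is $\mathcal{E}^h$-quasi-continuous. Hence each of the three conditions of quasi-regularity in \cite[Definition 4.9]{MR95} holds for $\mathcal{E}$ iff it holds for $\mathcal{E}^h$; in particular $(\mathcal{E},D(\mathcal{E}))$ is quasi-regular iff $(\mathcal{E}^h,D(\mathcal{E}^h))$ is. Throughout one uses freely that $m$ and $h^2\cdot m$ are mutually absolutely continuous, so that ``$m$-special standard'' and ``$h^2\cdot m$-special standard'' coincide, and that $\mathcal{E}$-exceptional sets are exactly the polar sets of any properly associated process.

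Suppose first that $(\mathcal{E},D(\mathcal{E}))$ is quasi-regular. Then $h\in D(\mathcal{E})$ has an $\mathcal{E}$-quasi-continuous $m$-version, which we take as $h$; since $h$ is $\alpha$-excessive and $h>0$ $m$-a.e., the set $\{h=0\}$ is $\mathcal{E}$-exceptional, so $h>0$ quasi-everywhere and the dictionary applies. By it, $(\mathcal{E}^h,D(\mathcal{E}^h))$ is a quasi-regular semi-Dirichlet form, hence properly associated with an $m$-special standard process $\mathbf{M}=(\Omega,\mathcal{F},(\mathcal{F}_t),(X_t),(P_x))$ whose transition semigroup $(P_t)$ satisfies: $P_tf$ is an $\mathcal{E}^h$-quasi-continuous version of $T^h_tf$ for every $f\in L^2(E;h^2\cdot m)$. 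Define $Q_t$ from this $\mathbf{M}$ by (\ref{Qt}). For $f\in L^2(E;m)$ we have $h^{-1}f\in L^2(E;h^2\cdot m)$, so by (\ref{h^2}) the function $P_t(h^{-1}f)$ is an $\mathcal{E}^h$-quasi-continuous version of $T^h_t(h^{-1}f)=h^{-1}e^{-\alpha t}T_tf$; therefore $Q_tf=he^{\alpha t}P_t(h^{-1}f)$ is an $m$-version of $T_tf$, and since $Q_tf/h=e^{\alpha t}P_t(h^{-1}f)$ is $\mathcal{E}^h$-quasi-continuous, $Q_tf$ is $\mathcal{E}$-quasi-continuous by the dictionary. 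Thus $\mathbf{M}$ is a properly $h$-associated $m$-special standard process, and $h$ is $\mathcal{E}$-quasi-continuous.

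Conversely, suppose $(\mathcal{E},D(\mathcal{E}))$ is properly $h$-associated with an $m$-special standard process $\mathbf{M}$ with transition semigroup $(P_t)$, and let $Q_t$ be the kernels attached to $\mathbf{M}$ by (\ref{Qt}) (so $\{h=0\}$ is polar for $\mathbf{M}$, by the very meaning of the right side of (\ref{Qt})). I would first show that $h$ is $\mathcal{E}$-quasi-continuous, not yet using the dictionary. For $\beta>\alpha$ one has $0\le Q_th\le e^{\alpha t}h$, so $\int_0^\infty e^{-\beta t}Q_th\,dt$ converges, and a standard Riemann-sum approximation (using that a dominated integral of $\mathcal{E}$-quasi-continuous functions belonging to $D(\mathcal{E})$ is again $\mathcal{E}$-quasi-continuous) shows it is an $\mathcal{E}$-quasi-continuous version of $G_\beta h$. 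Since $h$ is $\alpha$-excessive, $(\beta-\alpha)G_\beta h\uparrow h$ in $\mathcal{E}_1$-norm as $\beta\to\infty$ and the corresponding quasi-continuous versions increase quasi-everywhere, whence $h$ has an $\mathcal{E}$-quasi-continuous version; as before $\{h=0\}$ is then $\mathcal{E}$-exceptional, and the dictionary applies. Finally, by (\ref{Qt}) and (\ref{h^2}), for $f\in L^2(E;h^2\cdot m)$ one has $P_tf=h^{-1}e^{-\alpha t}Q_t(hf)$, which is $\mathcal{E}$-quasi-continuous (since $hf\in L^2(E;m)$ makes $Q_t(hf)$ quasi-continuous, and $h^{-1}$ is quasi-continuous and finite quasi-everywhere), hence $\mathcal{E}^h$-quasi-continuous, and it agrees $m$-a.e. with $T^h_tf$. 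So $\mathbf{M}$ is properly associated, in the semi-Dirichlet sense, with $(\mathcal{E}^h,D(\mathcal{E}^h))$; by the converse half of the cited characterisation $(\mathcal{E}^h,D(\mathcal{E}^h))$ is quasi-regular, and by the dictionary so is $(\mathcal{E},D(\mathcal{E}))$.

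The routine part is the bookkeeping in the two implications; the real content is the dictionary together with the two facts it rests on — that the $\alpha$-excessive $h$ has a quasi-continuous, quasi-everywhere strictly positive version, and that this makes the quasi-notions of $\mathcal{E}$ and $\mathcal{E}^h$ literally the same. The main obstacle is the apparent circularity: the dictionary presupposes $h$ quasi-continuous, yet proving this seems to need the dictionary. The way out is that quasi-continuity of $h$ comes for free in the ``$\Leftarrow$'' direction (it lies in $D(\mathcal{E})$ of the quasi-regular form) and is produced directly from the process in the ``$\Rightarrow$'' direction through $(\beta-\alpha)G_\beta h\uparrow h$, in both cases before the dictionary is invoked. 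A secondary subtlety is that $h$ is a priori only an $m$-equivalence class, so ``$h$ quasi-continuous'' has to be read as ``the version of $h$ used in (\ref{Qt}) agrees quasi-everywhere with a quasi-continuous function''; and one must keep the reference measures $m$ and $h^2\cdot m$ aligned throughout. The fact that $\{h=0\}$ is $\mathcal{E}$-exceptional (equivalently polar for the associated process) is the one place where genuine use is made of $h$ being $\alpha$-excessive rather than merely an element of $D(\mathcal{E})$.
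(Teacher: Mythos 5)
The paper does not prove this proposition; it imports it verbatim from \cite[Theorem 5.2]{MR95} (the sentence immediately preceding the statement reads ``The following result was proved in [MR95].''), so there is no paper proof to compare against. Your reconstruction---reduce to the semi-Dirichlet form $(\mathcal{E}^h,D(\mathcal{E}^h))$ via the $h$-transform, build the dictionary between $\mathcal{E}$- and $\mathcal{E}^h$-quasi-notions, and invoke the semi-Dirichlet characterisation of quasi-regularity---is exactly the route of \cite{MR95} (the dictionary is \cite[Proposition 4.2]{MR95}, which this paper itself invokes in the proof of Lemma \ref{invariantSet}), and it is essentially correct. One place to be more careful: you assert, in both directions, that once a quasi-continuous $m$-version $\tilde h$ of $h$ exists, the set $\{\tilde h=0\}$ is $\mathcal{E}$-exceptional ``since $h$ is $\alpha$-excessive and $h>0$ $m$-a.e.''. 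This is true but not a one-liner---an $m$-null Borel set need not have zero capacity---and it is precisely here that $\alpha$-excessivity does real work, e.g.\ through $\beta G_{\alpha+\beta}\tilde h\uparrow\tilde h$ q.e.\ combined with strict positivity of the resolvent kernel off an exceptional set. In the same spirit, in the converse direction the passage from quasi-continuity of each $Q_t h$ (which is what ``properly $h$-associated'' buys you) to quasi-continuity of $\int_0^\infty e^{-\beta t}Q_t h\,dt$ deserves the explicit remark that the Riemann sums converge in $\mathcal{E}_1$-norm, so that a subsequence converges quasi-uniformly; your phrase ``a dominated integral of quasi-continuous functions is quasi-continuous'' is hiding exactly that step.
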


\begin{remark}
Let $h \in D(\mathcal{E})$ be a strictly positive $\alpha$-excessive function.
If $(T^h_t)_{t\geq0}$ is associated with a right process ${\bf M}$, then by a result of \cite{Fi01}, $(\mathcal{E}^h,D(\mathcal{E}^h))$ is quasi-regular and ${\bf M}$  is in fact an m-tight special standard process properly associated with $(\mathcal{E}^h,D(\mathcal{E}^h)).$ If in addition $h$ is  $\mathcal{E}^h$-quasi-continuous, then $(\mathcal{E},D(\mathcal{E}))$ is quasi-regular and is properly $h$-associated with ${\bf M}.$

\end{remark}

Note that the $h$-associated process mentioned above depends on the $\alpha$-excessive function $h$ and hence a positivity preserving coercive form may have many different $h$-associated Markov processes. Inspired by the work of pseudo Hunt processes introduced in \cite{Oshima13}, below we shall construct a family of ($\sigma$-finite) distribution flows on path space, which is independent of $h$ and can accommodate stochastic analysis for  positivity preserving coercive forms.

\section{Distribution flows associated with $(\mathcal{E},D(\mathcal{E}))$}
\label{section-distributionFlow}

Throughout this section we assume that $(\mathcal{E},D(\mathcal{E}))$ is a quasi-regular positivity preserving coercive form  and $(T_{t})_{t\geq0}$ is its associated semigroup. From now on we fix an $\alpha>0$ and denote by $\mathcal{H}$ the totality of strictly positive $\mathcal{E}$-quasi-continuous $\alpha$-excessive functions in $D(\mathcal{E})$. Note that by the quasi-regularity of $(\mathcal{E}, D(\mathcal{E}))$ (cf.\cite[Definition 4.9]{MR95}), $\mathcal{H}$ is non-empty. By Proposition \ref{h-process} we know  that  $(T_{t})_{t\geq0}$ admits a kernel $Q_t(x,\cdot)$ determined by (\ref{Qt}) with the help of some $h\in \mathcal{H}$. Denote by
$\Omega$ the Skorohod space over $E$ with cemetery $\Delta$.
We shall make use of the kernel $Q_t(x,\cdot)$ to construct a flow of $\sigma$-finite distributions $(\overline{Q}_{x,t})_{t \geq 0}$ on $\Omega$. To this end, in what follows we fix an $h\in \mathcal{H}.$  Let
$$
\mathbf{M}^h:=(\Omega,\mathcal{F}^h,(\mathcal{F}_t^h)_{t\geq0},(X_t)_{t\geq0},
(P_x^h)_{x\in E_\Delta})
$$
with transition semigroup $(P_t^h)_{t\geq0}$ and life time $\zeta$ be a special standard process properly $h$-associated with $(\mathcal{E},D(\mathcal{E}))$. Without loss of generality we may assume that $\Omega$ is the Skorohod space over $E$ with cemetery $\Delta$, and $(X_t)_{t\geq0}$ is the canonical process  on $\Omega$, i.e., $X_t(\omega)=\omega_t$ for $\omega\in \Omega$. Denote by $(\mathcal{F}^0_t)_{t\geq 0}$ the natural filtration of $(X_t)_{t\geq 0}$ without augmentation. That is,
\begin{equation}\label{eq-natural-filtration}
\mathcal{F}^0_t:=\sigma(X_s,s\in[0,t]),~~\mathcal{F}^0_\infty:=
\sigma(X_s,s\geq 0).
\end{equation}
In this paper for any measurable space $(\Omega,\mathcal{F}),$
we shall denote by $p\mathcal{F}$ all the nonnegative
$\mathcal{F}$-measurable functions on $\Omega,$  and by $b\mathcal{F}$   all the bounded $\mathcal{F}$-measurable functions on $\Omega.$

\begin{definition}\label{def-Qtx}
 For $t\geq0, x\in E,$  we define a measure $\overline{Q}_{x,t}$ on $\mathcal{F}^0_t$ by setting
\begin{equation}\label{eq-distribution}
\overline{Q}_{x,t}(\Lambda):= \overline{Q}_{x,t}(\Lambda; t<\zeta )=h(x)E^{h}_{x}(\frac{e^{\alpha t}I_{\Lambda}}{h(X_t)};t<\zeta), ~ \forall ~ \Lambda\in\mathcal{F}^0_t,
\end{equation}
where $I_{\Lambda}$ is the indicator function of $\Lambda$, $E^{h}_{x}$ is the expectation related to $P^h_x$.
We call $\overline{Q}_{x,t}$ a {\bf \it $\sigma$-finite distribution up to time t} (in short, distribution up to t), and call $(\overline{Q}_{x,t})_{t\geq 0}$ a {\bf \it $\sigma$-finite distribution flow (in short, distribution flow) associated with $(\mathcal{E}, D(\mathcal{E}))$}.
\end{definition}

The reader should be aware that unlike usual distribution, $\overline{Q}_{x,t}$ is in general not a probability measure, or even not a finite measure, but it is a $\sigma$-finite measure (cf. (\ref{eq: Q_tf}) and (\ref{eq: Q_ttf}) below ).  This is the reason that we address ``$\sigma$-finite" in the definition. We are grateful to Mu-Fa Chen who suggested us to distinct $\overline{Q}_{x,t}$ from the usual distribution clearly.

We shall denote by $\overline{Q}_{x,t}[\cdot],$ or $\overline{Q}_{x,t}[\cdot~; t<\zeta],$ the integral related to $\overline{Q}_{x,t}$. With the convention that $f(\Delta)=0$ for any function originally defined on $E,$ comparing (\ref{eq-distribution}) with (\ref{Qt}), we see that for any $f\in p\mathcal{B}(E)$, it holds that
\begin{equation}\label{eq: Q_tf}
\overline{Q}_{x,t}[f(X_t)]= Q_tf(x).
\end{equation}
In fact we have the following extension of (\ref{eq: Q_tf}).
\begin{lemma}\label{lem:distribution}
For all $0\leq t_1<t_2<\dots<t_n=t,f\in p\mathcal{B}(E^n),~ x\in E$,~it holds that
\begin{align}\label{eq: Q_ttf}
  \overline{Q}_{x,t}&[f(X_{t_1},\dots,X_{t_n})]\\
=&\int_{E}Q_{t_1}(x,dx_1)\dots\int_{E}Q_{t_n-t_{n-1}}(x_{n-1},dx_n)
f(x_1,\dots,x_n).\nonumber
\end{align}
\end{lemma}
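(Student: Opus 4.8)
The plan is to reduce the identity (\ref{eq: Q_ttf}) to the already-established formula (\ref{eq: Q_tf}) together with the Markov property of the $h$-associated process $\mathbf{M}^h$. First I would rewrite the left-hand side using the definition (\ref{eq-distribution}): for $f\in p\mathcal{B}(E^n)$ and $\Lambda=\{(X_{t_1},\dots,X_{t_n})\in \cdot\}$-type events (note $f(X_{t_1},\dots,X_{t_n})$ is indeed $\mathcal{F}^0_t$-measurable since $t_n=t$), we have
\begin{equation*}
\overline{Q}_{x,t}[f(X_{t_1},\dots,X_{t_n})]=h(x)E^h_x\Bigl[\frac{e^{\alpha t}}{h(X_t)}f(X_{t_1},\dots,X_{t_n}); t<\zeta\Bigr].
\end{equation*}
The factor $e^{\alpha t}/h(X_t)$ is measurable with respect to $X_t=X_{t_n}$, so I would absorb it into the integrand and write $g(x_1,\dots,x_n):=e^{\alpha t}h(x_n)^{-1}f(x_1,\dots,x_n)$, reducing the task to computing $h(x)E^h_x[g(X_{t_1},\dots,X_{t_n}); t<\zeta]$.

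Next I would invoke the ordinary finite-dimensional distribution formula for the Markov process $\mathbf{M}^h$ with transition semigroup $(P^h_t)_{t\ge 0}$: by the Markov property applied successively at times $t_1<t_2<\dots<t_n$,
\begin{equation*}
E^h_x[g(X_{t_1},\dots,X_{t_n})]=\int_{E_\Delta}P^h_{t_1}(x,dx_1)\int_{E_\Delta}P^h_{t_2-t_1}(x_1,dx_2)\cdots\int_{E_\Delta}P^h_{t_n-t_{n-1}}(x_{n-1},dx_n)\,g(x_1,\dots,x_n),
\end{equation*}
where one uses the convention $g=0$ whenever any argument equals $\Delta$ (which is consistent with $f(\Delta)=\cdots=0$ and automatically enforces the restriction to $\{t<\zeta\}$, since $X_t=\Delta$ on $\{t\ge\zeta\}$; here I would also use that $t_n=t$ so $\{X_{t_n}\in E\}=\{t<\zeta\}$). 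Then I would substitute back $g=e^{\alpha t}h(x_n)^{-1}f$ and redistribute the exponential as a telescoping product $e^{\alpha t}=e^{\alpha t_1}e^{\alpha(t_2-t_1)}\cdots e^{\alpha(t_n-t_{n-1})}$, and insert the telescoping factor $1=h(x)^{-1}h(x_1)\cdot h(x_1)^{-1}h(x_2)\cdots$ (each $h(x_i)$ strictly positive on $E$ by $h\in\mathcal{H}$). Multiplying through by the overall $h(x)$, each consecutive block $h(x_{i-1})e^{\alpha(t_i-t_{i-1})}P^h_{t_i-t_{i-1}}(x_{i-1},dx_i)h(x_i)^{-1}$ is precisely $Q_{t_i-t_{i-1}}(x_{i-1},dx_i)$ by the definition (\ref{Qt}) of the kernel $Q_t(x,\cdot)$, which yields the claimed right-hand side.

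The routine parts are the two bookkeeping manipulations — the telescoping of the exponential and of the $h$-factors — and a monotone-class argument to pass from product-form $f=f_1\otimes\cdots\otimes f_n$ (where everything is a clean product of kernels) to general $f\in p\mathcal{B}(E^n)$. The main point requiring care, which I would highlight as the true content, is the careful handling of the cemetery $\Delta$ and the lifetime $\zeta$: one must check that the convention $f(\Delta)=0$ makes the insertion of the $h(x_i)^{-1}$ factors legitimate (no division by $h(\Delta)$ ever occurs because the integrand already vanishes there) and that extending the $P^h$-integrals over $E_\Delta$ versus over $E$ does not change anything. Apart from that subtlety, the proof is a direct consequence of (\ref{eq: Q_tf}), the definition of $Q_t$, and the Markov property of $\mathbf{M}^h$.
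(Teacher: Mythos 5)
Your proposal is correct and follows essentially the same route as the paper: unwind the definition of $\overline{Q}_{x,t}$, apply the Markov property of $\mathbf{M}^h$ to get the finite-dimensional $P^h$-kernel formula, then telescope $e^{\alpha t_n}$ and insert $h(x_i)h(x_i)^{-1}$ factors to reassemble the $Q$-kernels. The extra remarks you flag (handling of $\Delta$ and $\zeta$ via the convention $f(\Delta)=0$, and the monotone-class passage from product functions to general $f\in p\mathcal{B}(E^n)$) are sound but are exactly the routine details the paper leaves implicit.
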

\begin{proof}
Let $0\leq t_1<t_2<\dots<t_n=t,f\in\mathcal{B}(E^n)^+,~q.e.~ x\in E$, we have
\begin{align*}
&\overline{Q}_{x,t}[f(X_{t_1},\dots,X_{t_n})]\\
&=h(x)E^{h}_x[\frac{e^{\alpha t_n}f(X_{t_1},\dots,X_{t_n})}{h(X_{t_n})};t_n<\zeta]\\
&=h(x)e^{\alpha t_n}\int_EP^{h}_{t_1}(x,dx_1)\dots\int_EP^{h}_{t_n-t_{n-1}}(x_{n-1},dx_n)
\frac{f(x_1,\dots,x_n)}{h(x_n)}\\
&=h(x)e^{\alpha t_1}\int_E\frac{1}{h(x_1)}P^{h}_{t_1}(x,dx_1)h(x_1)\dots \\
&\cdot h(x_{n-1})e^{\alpha(t_n-t_{n-1})}\int_EP^{h}_{t_n-t_{n-1}}(x_{n-1},dx_n)
\frac{f(x_1,\dots,x_n)}{h(x_n)}\\
&=\int_EQ_{t_1}(x,dx_1)\dots\int_EQ_{t_n-t_{n-1}}(x_{n-1},dx_{n})
f(x_1,\dots,x_n).
\end{align*}
\end{proof}
We remark that for a given  $h \in \mathcal{H},$ there are many different special standard processes $\mathbf{M}^h$ properly $h$-associated with $(\mathcal{E},D(\mathcal{E}))$. But these $h$-associated processes are equivalent to each other in the sense of \cite[IV.Definition 6.3]{MR92} (cf. also \cite[Theorem 4.2.8]{FOT11}).
Accordingly, we introduce the notion of equivalence for distribution flows as the definition below.
\begin{definition}\label{equivalent}
Let $\overline{Q}_{x,t}$ and $\overline{Q}_{x,t}'$ be two $\sigma$-finite distribution flows (may be constructed with different $h$ and/or different  $\mathbf{M}^h$). We say that $\overline{Q}_{x,t}$ and $\overline{Q}'_{x,t}$ are equivalent to each other, if there exists a Borel set $S\subset E$ such that $(\overline{Q}_{x,t})_{t\geq 0}$ and $(\overline{Q}_{x,t}')_{t\geq 0}$ are identical for all $x\in S$ and $E\setminus S$ is $\mathcal{E}$-exceptional.
\end{definition}

\begin{proposition}
Given $h\in \mathcal{H}.$ Let $\overline{Q}_{x,t}$ and $\overline{Q}_{x,t}'$ be distribution flows constructed by (\ref{eq-distribution}) with two different $h$-associated processes. Then $\overline{Q}_{x,t}$ and $\overline{Q}_{x,t}'$ are equivalent to each other in the sense of Definition \ref{equivalent}.
\end{proposition}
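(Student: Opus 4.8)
The plan is to reduce the statement about distribution flows to the known equivalence of the underlying $h$-associated processes. Fix $h\in\mathcal{H}$, and let $\mathbf{M}^h$ and $\mathbf{M}'^h$ be two special standard processes properly $h$-associated with $(\mathcal{E},D(\mathcal{E}))$, giving rise to $\overline{Q}_{x,t}$ and $\overline{Q}'_{x,t}$ via (\ref{eq-distribution}). By the remark preceding Definition \ref{equivalent}, $\mathbf{M}^h$ and $\mathbf{M}'^h$ are equivalent in the sense of \cite[IV.Definition 6.3]{MR92}: there is an $\mathcal{E}$-exceptional (equivalently, $\mathcal{E}^h$-exceptional, by Remark \ref{lemma-coexcessive by h trans}) set $N\subset E$ and a common copy of the processes off $N$, so that $P^h_x$ and $P'^h_x$ induce the same law on $(\Omega,\mathcal{F}^0_\infty)$ for every $x\in E\setminus N$. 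Set $S:=E\setminus N$; this is the Borel set required by Definition \ref{equivalent}, and $E\setminus S=N$ is $\mathcal{E}$-exceptional.

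The next step is to verify that on $S$ the two distribution flows coincide. Fix $x\in S$ and $t\geq 0$. For any $\Lambda\in\mathcal{F}^0_t$,
\begin{equation}\label{eq-prop-equivalence}
\overline{Q}_{x,t}(\Lambda)=h(x)\,E^{h}_{x}\!\left(\frac{e^{\alpha t}I_{\Lambda}}{h(X_t)};\,t<\zeta\right),
\end{equation}
and the analogous identity holds for $\overline{Q}'_{x,t}$ with $E'^h_x$. The integrand $\frac{e^{\alpha t}I_{\Lambda}}{h(X_t)}I_{\{t<\zeta\}}$ is a nonnegative $\mathcal{F}^0_\infty$-measurable (indeed $\mathcal{F}^0_t$-measurable, since $\{t<\zeta\}=\{X_t\neq\Delta\}\in\mathcal{F}^0_t$ and $h$ is Borel on $E$ with the convention $h(\Delta)=0$, so $\frac1{h(X_t)}$ is interpreted as $0$ on $\{X_t=\Delta\}$) function of the path. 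Because $P^h_x$ and $P'^h_x$ are the same measure on $\mathcal{F}^0_\infty$ for $x\in S$, the two expectations agree, hence $\overline{Q}_{x,t}(\Lambda)=\overline{Q}'_{x,t}(\Lambda)$. Since $\Lambda\in\mathcal{F}^0_t$, $t\geq 0$, and $x\in S$ were arbitrary, $(\overline{Q}_{x,t})_{t\geq 0}$ and $(\overline{Q}'_{x,t})_{t\geq 0}$ are identical on $S$, which is exactly equivalence in the sense of Definition \ref{equivalent}.

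The only genuine point to be careful about is the passage from ``equivalent processes'' to ``same law on $\mathcal{F}^0_\infty$''. Equivalence of right processes à la \cite[IV.Def.~6.3]{MR92} is usually phrased in terms of agreement of the transition functions outside an exceptional set together with a common restriction; what one actually needs here is that for $x$ outside the exceptional set the finite-dimensional distributions of $(X_t)_{t\geq 0}$ under $P^h_x$ and $P'^h_x$ coincide. This follows because both are Markov with the same transition semigroup $(P^h_t)_{t\geq 0}$ (or its common $m$-version refined to a proper $h$-associated transition kernel) off the exceptional set, and by a monotone-class argument the finite-dimensional distributions determine the measure on $\mathcal{F}^0_\infty=\sigma(X_s,s\geq 0)$. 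So the main obstacle, if any, is purely bookkeeping: tracking that the exceptional set can be taken Borel and that the quasi-continuous version of $h$ used in forming $\overline{Q}$ is the same for both constructions (which it is, since $h$ is fixed and $\mathcal{E}$-quasi-continuous by Proposition \ref{h-process}). With these observations the proposition follows.
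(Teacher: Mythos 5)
Your proof is correct and follows essentially the same route as the paper, which simply cites \cite[IV.Theorem 6.4]{MR92} (equivalently \cite[Theorem 4.2.8]{FOT11}); you usefully spell out how equivalence of $\mathbf{M}^h$ and $\mathbf{M}'^h$ forces $P^h_x=P'^h_x$ on $\mathcal{F}^0_\infty$ off an exceptional set and hence forces the $\overline Q$'s to agree pointwise there. One small citation slip: the fact that $\mathcal{E}$-exceptional and $\mathcal{E}^h$-exceptional sets coincide comes from \cite[Proposition 4.2]{MR95} (used in the paper's Lemma \ref{invariantSet}), not from Remark \ref{lemma-coexcessive by h trans}, which concerns excessive functions under $h$-transform.
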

\begin{proof}
The proposition can be proved by employing  \cite[IV.Theorem 6.4]{MR92} or \cite[Theorem 4.2.8]{FOT11}.
\end{proof}
The above proposition can be strengthened as the theorem below.
\begin{theorem}\label{thm-independent}
$\overline{Q}_{x,t}$ defined by (\ref{eq-distribution}) is independent of $h\in \mathcal{H}.$ More precisely, let $\overline{Q}_{x,t}$ and $\overline{Q}_{x,t}'$ be distribution flows constructed by (\ref{eq-distribution}) with two different $h\in \mathcal{H}$ and $h'\in \mathcal{H}$. Then $\overline{Q}_{x,t}$ and $\overline{Q}_{x,t}'$ are equivalent to each other in the sense of Definition \ref{equivalent}.
\end{theorem}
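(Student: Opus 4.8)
The plan is to bridge the two $h$-associated processes by Doob's $h$-path transform, so that the comparison of $\overline{Q}_{x,t}$ with $\overline{Q}'_{x,t}$ is reduced to a comparison of two distribution flows built from the \emph{same} auxiliary function, to which the preceding Proposition applies. Fix $h,h'\in\mathcal H$ and let $\mathbf{M}^{h}$, $\mathbf{M}^{h'}$ be the special standard processes used in the constructions of $\overline{Q}_{x,t}$ and $\overline{Q}'_{x,t}$. Put $k:=h'/h$. Since $h$ and $h'$ are strictly positive and $\mathcal E$-quasi-continuous, $k$ is $\mathcal E$-quasi-continuous, takes values in $(0,\infty)$ q.e., and is genuinely excessive for $\mathbf{M}^{h}$: indeed, recalling $P^h_tf=T^h_tf=h^{-1}e^{-\alpha t}T_t(hf)$ (cf.\ (\ref{h^2})) and $e^{-\alpha t}T_th'\le h'$, one has $P^h_tk=h^{-1}e^{-\alpha t}T_t(h')\le h^{-1}h'=k$. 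Redefining $k:=1$ on the $\mathcal E$-exceptional set where it is $0$ or $\infty$ is harmless, as such a set is $P^h_x$-polar for q.e.\ $x$. One further computes that the $k$-transform of the semigroup is $k^{-1}P^h_t(kf)=h'^{-1}e^{-\alpha t}T_t(h'f)=T^{h'}_tf$.

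First I would invoke the classical theory of Doob's $h$-path transform: as $k$ is a finite, strictly positive (q.e.), excessive function of the special standard process $\mathbf{M}^{h}$, the $k$-transformed process $\mathbf{M}^{h,k}$ can be realized on $\Omega$ with the same canonical coordinate process $(X_t)_{t\ge 0}$ and the same life-time functional $\zeta=\inf\{s:X_s=\Delta\}$; it is a right process, its transition semigroup is $k^{-1}P^h_t(k\,\cdot)=T^{h'}_t$, and its law satisfies
\begin{equation}\label{eq-doobtransform}
E^{h,k}_x[F;t<\zeta]=\frac{1}{k(x)}\,E^h_x\bigl[F\,k(X_t)\bigr],\qquad F\in p\mathcal{F}^0_t,\ t\ge 0,
\end{equation}
for every $x$ with $0<k(x)<\infty$ (on the right, $k(\Delta)=0$ makes the restriction $t<\zeta$ automatic). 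Since $T^{h'}_t$ is now associated with the right process $\mathbf{M}^{h,k}$, the result of \cite{Fi01} recalled in the Remark following Proposition \ref{h-process} shows that $\mathbf{M}^{h,k}$ is in fact an $m$-special standard process. Finally, the kernel attached to $(h',\mathbf{M}^{h,k})$ in the sense of (\ref{Qt}) is
\[
h'e^{\alpha t}P^{h,k}_t(h'^{-1}f)=h'e^{\alpha t}\,k^{-1}P^h_t\bigl(k\,h'^{-1}f\bigr)=h\,e^{\alpha t}P^h_t(h^{-1}f),\qquad f\in L^2(E;m),
\]
which is exactly the kernel $Q_t$ of $(h,\mathbf{M}^{h})$, hence an $\mathcal E$-quasi-continuous $m$-version of $T_tf$; thus $\mathbf{M}^{h,k}$ is a special standard process \emph{properly $h'$-associated} with $(\mathcal E,D(\mathcal E))$, i.e.\ an admissible choice in Definition \ref{def-Qtx} with $h'$ in place of $h$.

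Let $\overline{Q}^{\flat}_{x,t}$ denote the distribution flow defined by (\ref{eq-distribution}) from the pair $(h',\mathbf{M}^{h,k})$. Using (\ref{eq-doobtransform}) and $k=h'/h$, for every $t\ge 0$, every $x$ with $0<k(x)<\infty$, and every $\Lambda\in\mathcal{F}^0_t$ one obtains
\begin{align*}
\overline{Q}^{\flat}_{x,t}(\Lambda)
&=h'(x)E^{h,k}_x\Bigl[\tfrac{e^{\alpha t}I_\Lambda}{h'(X_t)};t<\zeta\Bigr]
=\frac{h'(x)}{k(x)}E^h_x\Bigl[k(X_t)\tfrac{e^{\alpha t}I_\Lambda}{h'(X_t)};t<\zeta\Bigr]\\
&=h(x)E^h_x\Bigl[\tfrac{e^{\alpha t}I_\Lambda}{h(X_t)};t<\zeta\Bigr]=\overline{Q}_{x,t}(\Lambda),
\end{align*}
so $\overline{Q}^{\flat}_{x,t}=\overline{Q}_{x,t}$ for all $t$ and all $x$ outside the $\mathcal E$-exceptional set on which $k$ was modified. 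On the other hand, $\overline{Q}^{\flat}_{x,t}$ and $\overline{Q}'_{x,t}$ are distribution flows constructed by (\ref{eq-distribution}) with the \emph{same} auxiliary function $h'$ but with (possibly) different properly $h'$-associated processes $\mathbf{M}^{h,k}$ and $\mathbf{M}^{h'}$, hence by the preceding Proposition they are equivalent in the sense of Definition \ref{equivalent}. Chaining the two comparisons gives $\overline{Q}_{x,t}\sim\overline{Q}'_{x,t}$, which is the assertion.

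The main obstacle is the process-theoretic input in the second paragraph: one must establish that Doob's $h$-path transform of an $m$-special standard process by a finite, strictly positive, excessive function $k$ produces a right process realizable on the Skorohod space $\Omega$ with the change-of-measure formula (\ref{eq-doobtransform}), while taking proper care of the cemetery $\Delta$, the life-time $\zeta$, and the $\mathcal E$-exceptional set where $k\in\{0,\infty\}$; this is classical, but the bookkeeping has to be carried out carefully. A more self-contained alternative would express, via Lemma \ref{lem:distribution}, both flows on cylinder sets as iterated integrals of kernels that are $\mathcal E$-quasi-continuous versions of $T_s$ and then reconcile the countably many $\mathcal E$-exceptional sets arising for rational times; but controlling a single exceptional set uniformly in $t$ is itself delicate there, which is why the Doob-transform route seems preferable.
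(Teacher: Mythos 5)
Your proposal is correct, and it takes a genuinely different route from the paper. The paper proves the theorem by working directly with the kernels: Lemma~\ref{lem:distribution} expresses $\overline{Q}_{x,t}$ as an iterated integral of $Q_s$-kernels, and the real work is done in Lemma~\ref{invariantSet}, which produces a single Borel set $S$ (with $E\setminus S$ $\mathcal{E}$-exceptional) on which $Q^{(h_1)}_t(x,\cdot)=Q^{(h_2)}_t(x,\cdot)$ for \emph{all} $t>0$ simultaneously. That lemma is where the delicate uniform-in-$t$ exceptional-set control lives: one picks a countable separating family $\mathcal{U}\subset C(\{F_k\})$, notes $Q^{(h_1)}_tu=Q^{(h_2)}_tu$ q.e.\ for each fixed $t$, passes to a common exceptional set over rational $t$, upgrades to all $t$ by right continuity, and then wraps the result into an $\mathbf{M}^{h_1}$- and $\mathbf{M}^{h_2}$-invariant set. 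Your approach instead exploits the multiplicative structure of $h$-transforms: $k=h'/h$ is a finite, strictly positive (q.e.), $0$-excessive function of $\mathbf{M}^h$, and the Doob $k$-transform of $\mathbf{M}^h$ is a properly $h'$-associated process whose distribution flow \emph{literally equals} $\overline{Q}_{x,t}$ off a fixed exceptional set, by the change-of-measure formula. This reduces the problem to comparing two distribution flows built from the \emph{same} $h'$, which is exactly Proposition~\ref{pro-exdistribution}'s predecessor (the Proposition just before Theorem~\ref{thm-independent}), itself a one-line reference to the standard equivalence theorem for processes associated with a quasi-regular (semi-)Dirichlet form (\cite[IV.Theorem~6.4]{MR92}, \cite[Theorem~4.2.8]{FOT11}). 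Both approaches eventually invoke an invariant-set construction, but yours outsources it to cited theory whereas the paper carries it out by hand in Lemma~\ref{invariantSet}; the trade-off is that your route needs the Doob-transform bookkeeping (realizability on $\Omega$, cemetery and lifetime, the polar set where $k\in\{0,\infty\}$), which you correctly flag as the main technical input. Both proofs are sound, and yours is a legitimate, arguably more conceptual, alternative.
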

The proof of Theorem \ref{thm-independent} needs some preparations.

Recall that for a right process $\mathbf{M}=(\Omega,\mathcal{F},(\mathcal{F}_t)_{t\geq0},(X_t)_{t\geq0},
(P_x)_{x\in E_\Delta})$ with life time  $\zeta$, a set $S\in\mathcal{B}(E)$ is called $\mathbf{M}$-invariant, if there exists $\Omega_{E\setminus S}\in\mathcal{F}$ such that
$$\Omega_{E\setminus S}\supset\{\overline{X^t_0}\cap(E\setminus S)\not=\emptyset ~\mbox{for some}~ 0\leq t<\zeta\}$$
and $P_z(\Omega_{E\setminus S})=0$ for all $z\in S$. Here, $\overline{X^t_0}(\omega)$ stands for the closure of $\{X_s(\omega)~|~s\in[0,t]\}$ in $E$ (cf. e.g. \cite[IV.Dfinition 6.1]{MR92}).
For any subset $S\subset E,$ we use $\mathcal{B}(S)^+$ to denote all the
 nonnegative Borel functions on $S$.

Let $h\in \mathcal{H}$ and $h'\in \mathcal{H}.$ Below we write $h_1=h$ and
$h_2=h'$ for simplicity. For $i=1,2,$ let
$\mathbf{M}^{h_i}:=(\Omega,\mathcal{F}^{h_i},(\mathcal{F}_t^{h_i})_{t\geq0},
(X_t)_{t\geq0}, (P_x^{h_i})_{x\in E_\Delta})$
with transition semigroup $(P_t^{h_i})_{t\geq0}$ and life time $\zeta$ be a special standard process properly $h_i$-associated with $(\mathcal{E},D(\mathcal{E}))$.
\begin{lemma}\label{invariantSet} Let $Q^{(h_i)}_t$ be defined by (\ref{Qt}) with respect to $P^{h_i}_t,~i=1,2.$ Then there exists an $\mathcal{E}$-nest $(F_k)_{k\geq 1}$ consisting of compact sets such that $h_i \in C(\{F_k\})$ for both $i=1,2$, and a Borel set $S \subset \bigcup_{k\geq 1} F_k$
 satisfying the following properties.\\
(i) $E\setminus S$ is $\mathcal{E}$-exceptional (hence $m(E\setminus S)=0$).\\
(ii) $S$ is both $\mathbf{M}^{h_1}$-invariant and $\mathbf{M}^{h_2}$-invariant.\\
(iii) $Q^{(h_1)}_t(x, E\setminus S)=Q^{(h_2)}_t(x, E\setminus S)=0$ and  $Q^{(h_1)}_tf(x)=Q^{(h_2)}_tf(x)$, for all $x\in S$, $t>0$, and $f\in\mathcal{B}(E)^+$.
\end{lemma}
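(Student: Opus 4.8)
The plan is to build the common nest and the common invariant set in two stages: first produce a single $\mathcal{E}$-nest that simultaneously serves both $h_1$ and $h_2$, then carve out inside it a Borel set on which the two $h$-transformed kernels literally agree, and finally upgrade this ``agreement $m$-a.e.'' to ``agreement everywhere on $S$'' by shrinking $S$ to an invariant set. For the first stage: since $(\mathcal{E},D(\mathcal{E}))$ is quasi-regular, both $h_1,h_2\in D(\mathcal{E})$ admit $\mathcal{E}$-quasi-continuous versions, so there are $\mathcal{E}$-nests $(F_k^{(1)})$ and $(F_k^{(2)})$ of compact sets with $h_i\in C(\{F_k^{(i)}\})$; intersecting termwise and using that a finite intersection of nests is a nest (and that each $\mathbf{M}^{h_i}$ is $m$-special standard, so one may refine to a nest of compact sets adapted to both processes via the standard regularization as in \cite{MR92} Ch.~IV--V) yields one nest $(F_k)$ with $h_1,h_2\in C(\{F_k\})$. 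Set $S_0:=\bigcup_k F_k$; then $E\setminus S_0$ is $\mathcal{E}$-exceptional.

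For the second stage, I would compare the two transformed kernels $m$-a.e. Both $\mathbf{M}^{h_i}$ are properly $h_i$-associated with $(\mathcal{E},D(\mathcal{E}))$, so by \eqref{eq: Q_tf} and Definition~\ref{h-associate}(ii), for each $t>0$ and $f\in L^2(E;m)$, $Q^{(h_1)}_tf$ and $Q^{(h_2)}_tf$ are both $\mathcal{E}$-quasi-continuous $m$-versions of the same element $T_tf$; two quasi-continuous functions that agree $m$-a.e. agree $\mathcal{E}$-q.e. Running this over a countable family of $f$'s separating points of $E$ (e.g. a countable $\mathcal{E}_1$-dense subset of $D(\mathcal{E})\cap C_b$, or bounded Borel functions forming a measure-determining class) and over a countable dense set of times $t$, then using right continuity in $t$ of $s\mapsto Q^{(h_i)}_sf$ along the cadlag paths, I obtain an $\mathcal{E}$-exceptional set $N$ off which the kernels $Q^{(h_1)}_t(x,\cdot)$ and $Q^{(h_2)}_t(x,\cdot)$ coincide as measures for every $t>0$. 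Put $S_1:=S_0\setminus N$; this is Borel, $E\setminus S_1$ is $\mathcal{E}$-exceptional, and (iii)'s identity $Q^{(h_1)}_tf(x)=Q^{(h_2)}_tf(x)$ holds for $x\in S_1$.

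For the third stage I must make $S$ invariant for both processes while keeping (iii). Since $E\setminus S_1$ is $\mathcal{E}$-exceptional and each $\mathbf{M}^{h_i}$ is properly associated with the quasi-regular semi-Dirichlet form $(\mathcal{E}^{h_i},D(\mathcal{E}^{h_i}))$, the set $E\setminus S_1$ is $\mathbf{M}^{h_i}$-polar/properly exceptional for $i=1,2$; by the standard result that one can remove a properly exceptional set to reach an invariant set (\cite{MR92} Ch.~IV, Thm.~5.28 / Def.~6.1, applied to each semi-Dirichlet form and noting that $\mathcal{E}$-exceptionality is equivalent to $\mathcal{E}^{h_i}$-exceptionality by Remark~\ref{lemma-coexcessive by h trans} together with the behaviour of $h_i$-transform on capacities), there is a Borel set $S\subset S_1$ with $E\setminus S$ still $\mathcal{E}$-exceptional that is simultaneously $\mathbf{M}^{h_1}$-invariant and $\mathbf{M}^{h_2}$-invariant; invariance forces $Q^{(h_i)}_t(x,E\setminus S)=0$ for $x\in S$ since the process started in $S$ never leaves $S$ before $\zeta$. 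Shrinking the nest $(F_k)$ if necessary so that $S\subset\bigcup_k F_k$ (replace $F_k$ by $F_k\cap\overline{S}$, or intersect with the nest witnessing exceptionality of $E\setminus S$) gives all of (i), (ii), (iii).

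The main obstacle I expect is the third stage: passing from ``$E\setminus S_1$ is $\mathcal{E}$-exceptional'' to a genuinely \emph{invariant} (not merely properly exceptional) Borel set that works for \emph{both} processes at once, with the kernels still defined and equal on it. The delicate point is that the two $h$-transforms live on different $L^2$-spaces ($L^2(E;h_1^2\cdot m)$ vs.\ $L^2(E;h_2^2\cdot m)$) and have different exceptional-set calculi a priori; one needs Remark~\ref{lemma-coexcessive by h trans} (or the invariance of the notion of $\mathcal{E}$-exceptional/quasi-continuous under $h$-transform, since $h_i$ is strictly positive and quasi-continuous) to identify the three capacity notions, and then apply the invariant-set construction separately and intersect. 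Verifying that this intersection is still invariant for each process — i.e. that removing the other process's exceptional set does not destroy invariance — is the technical crux, handled by the standard fact that a countable intersection of invariant sets is invariant once each complementary set is properly exceptional for the relevant process.
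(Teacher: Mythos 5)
Your proposal follows essentially the same three-stage route as the paper: (1) build a common nest $(F_k)$ of compacts serving both $h_1,h_2$; (2) exploit that $Q^{(h_1)}_t f$ and $Q^{(h_2)}_t f$ are both $\mathcal{E}$-quasi-continuous versions of $T_t f$, countably many $f$'s and rational $t$'s plus right-continuity in $t$ to pass to all $t>0$, obtaining kernel equality off an exceptional set $N$; (3) invoke the standard exceptional-set-removal / invariant-set machinery of \cite[IV.Corollary 6.5]{MR92} and \cite[Theorem 4.1.1]{FOT11} to carve out a Borel $S\subset\bigcup_k F_k$ that is simultaneously $\mathbf{M}^{h_1}$- and $\mathbf{M}^{h_2}$-invariant. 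This is exactly what the paper does, so the approach is correct.

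One place to tighten: the paper is more careful about \emph{which} countable family it uses in stage~2 — it fixes a countable $\mathcal{U}\subset C(\{F_k\})\cap\bigcup_k D(\mathcal{E})_{F_k}$ separating the points of $E$ off an exceptional set, precisely so that (a) the argument of \cite[IV.Proposition 5.30(ii)]{MR92} gives right-continuity of $t\mapsto Q^{(h_i)}_t u(z)$ q.e.\ for each $u\in\mathcal{U}$, and (b) agreement on $\mathcal{U}$ identifies the kernels as measures. Your suggestion of ``a countable $\mathcal{E}_1$-dense subset of $D(\mathcal{E})\cap C_b$, or bounded Borel functions forming a measure-determining class'' would not immediately provide both (a) and (b) simultaneously: a generic bounded Borel measure-determining class carries no right-continuity information, and $D(\mathcal{E})\cap C_b$ alone need not separate points of the whole space. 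The quasi-regularity axiom supplies precisely the countable family with both features, and the right-continuity argument genuinely requires membership in (local) $D(\mathcal{E})$ rather than mere boundedness, so this step should be spelled out. You also correctly flag that the final invariant set must be obtained by an iterated shrinking (or by applying the joint construction in \cite[IV.Corollary 6.5]{MR92}) rather than a naive intersection of two separately-invariant sets, which is indeed the technical crux; the paper handles it by appealing to that corollary.
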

\begin{proof}
Note first that by \cite[Proposition 4.2]{MR95}, $(F_k)_{k\geq 1}$ is an
$\mathcal{E}$-nest if and only if it is an $\mathcal{E}^{h_i}$-nest for $i=1,2,$  and a subset $N\subset E$ is an $\mathcal{E}$-exceptional set if and only if it is an $\mathcal{E}^{h_i}$-exceptional set for $i=1,2.$  By the property of quasi-regularity, we can take an $\mathcal{E}$-nest $(F_k)_{k\geq 1}$ consisting of compact sets and a countable family of bounded functions $\mathcal{U}\subset C(\{F_k\}),$ such that $h_i \in C(\{F_k\})$ for both $i=1,2,$ and such that $\mathcal{U}\subset \cup _{k\geq 1} D(\mathcal{E})_{F_k}$ and $\mathcal{U}$ separates the points of $E$ excepting an $\mathcal{E}$-exceptional set $N_0$.  Similar to the argument of  \cite[IV.Proposition 5.30 (ii)]{MR92}, we can show that for each $u \in \mathcal{U}$ there is an $\mathcal{E}$-exceptional set $N_u$ such that $Q^{(h_1)}_tu(z)$ and $Q^{(h_2)}_tu(z)$ are right continuous in $t$ for $z \in E\setminus N_u.$ On the other hand, by Definition \ref{h-associate} (ii) one can check that for each $t > 0$ and $u \in \mathcal{U},$ it holds that $Q^{(h_1)}_tu = Q^{(h_2)}_tu$~$\mathcal{E}$-q.e.. Therefore, by first considering rational $t,$ and then taking right limit along $t,$ we can find an $\mathcal{E}$-exceptional set $N$ such that $Q^{(h_1)}_t u(z) = Q^{(h_2)}_t u(z)$ for all $u \in \mathcal{U},t > 0$ and $z \in E\setminus N.$ This implies that $Q^{(h_1)}_t(z,\cdot)$ and $Q^{(h_2)}_t(z,\cdot)$ as kernels are identical for $t > 0$ and $z \in E_0$ where $E_0:= E \setminus (N\bigcup N_0).$ Hence, $Q^{(h_1)}_t f(z) = Q^{(h_2)}_t f(z)$ for all $z\in E_0,$ $t>0$, and $f\in p\mathcal{B}(E_0)$. Employing a standard argument (cf e.g. \cite[IV.Corollary 6.5]{MR92} and \cite[Theorem 4.1.1]{FOT11}), we can construct a Borel set $S \subset E_0$ such that $S \subset \bigcup_{k\geq 1} F_k$, $E \setminus S$ is $\mathcal{E}$-exceptional,  and $S$ is both $\mathbf{M}^{h_1}$-invariant and $\mathbf{M}^{h_2}$-invariant. By the property of invariant set and the fact that $S \subset E_0$ we can verify (iii) of the lemma.
\end{proof}
\begin{proof} (proof of Theorem \ref{thm-independent})

Let $\overline{Q}_{x,t}$ and $\overline{Q}_{x,t}'$ be distribution flows constructed by (\ref{eq-distribution}) with two different $h\in \mathcal{H}$ and $h'\in \mathcal{H}$. By Lemma \ref{lem:distribution} we see that $\overline{Q}_{x,t}$ is determined by the kernel $Q_t(x,\cdot)$ and $\overline{Q}_{x,t}'$ is determined by the kernel $Q_t'(x,\cdot)$ where $Q_t'(x,\cdot)$ is specified by (\ref{Qt}) with $h$ replaced by $h'$. Writing $h=h_1$ and $h'=h_2$, applying Lemma \ref{invariantSet} we see that there exists a Borel set $S \subset E$ such that $E\setminus S$ is $\mathcal{E}$-exceptional, and $(\overline{Q}_{x,t}')_{t\geq 0}$ and $(\overline{Q}_{x,t})_{t\geq 0}$ are identical for all $x\in S.$ Therefore   $\overline{Q}_{x,t}$ and $\overline{Q}_{x,t}'$ are equivalent to each other in the sense of Definition \ref{equivalent}.
\end{proof}

The theorem below explores more features of $\overline{Q}_{x,t}$.

\begin{theorem}\label{thm-feature}

(i) Let $s<t.$ The restriction of $\overline{Q}_{x,t}$ on $\mathcal{F}^0_s\cap \{s<\zeta\}$ is in general different from $\overline{Q}_{x,s}.$

(ii) There is in general no measure $\overline{Q}_{x}$ on $\mathcal{F}^0_\infty$ with the property that the restriction of $\overline{Q}_{x}$ on $\mathcal{F}^0_t\cap \{t<\zeta\}$ is equal to $\overline{Q}_{x,t}.$
\end{theorem}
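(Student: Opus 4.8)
The plan is to prove both assertions by exhibiting the failure phenomenon through the identity in Lemma~\ref{lem:distribution}, which reduces everything to the behaviour of the kernels $Q_t(x,\cdot)$. The key observation is that, unlike a sub-Markovian transition semigroup, the kernels $Q_t(x,\cdot)$ need not satisfy $Q_t(x,E)\le 1$; indeed from (\ref{eq: Q_tf}) we have $Q_t1(x)=\overline{Q}_{x,t}[1;t<\zeta]=h(x)e^{\alpha t}P_t h^{-1}(x)$, and since $(\mathcal{E},D(\mathcal{E}))$ is merely positivity preserving this quantity can be strictly larger than $1$ and, more importantly, can genuinely depend on $t$ in a non-monotone or increasing way. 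The strategy for (i) is: take $\Lambda=\Omega$ (or $\Lambda\in\mathcal{F}^0_s$), apply the Markov-type property to split $\overline{Q}_{x,t}[I_\Lambda;t<\zeta]$ over the interval $[s,t]$, and show that the extra factor is exactly $\int_E Q_s(x,dy)\,Q_{t-s}1(y)$ against $\overline{Q}_{x,s}$; this differs from $\overline{Q}_{x,s}(\Lambda)$ precisely because $Q_{t-s}1\not\equiv 1$ in general. Concretely I would reduce to a cylinder set, use (\ref{eq: Q_ttf}) with one time point at $s$ and one at $t$, and observe $\overline{Q}_{x,t}[I_\Lambda(X_s);s<\zeta]=\int Q_{t_1}(x,dx_1)\cdots$ collapses to $\int_E (\text{restriction to }\mathcal{F}^0_s)\,Q_{t-s}1(y)$.

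For (i), to make ``in general different'' precise and honest, I would supply a concrete example: the simplest is a one-point (or finite) state space where $\mathcal{E}$ is given by a single positive number that is not a rate matrix, e.g. $E=\{0\}$, $m=\delta_0$, and $\mathcal{E}(u,v)=c\,uv$ with $c$ such that $T_t = e^{-ct}$; here one can take $h\equiv 1$ if $\alpha\ge$ the relevant bound, and then $Q_t(0,\{0\})=e^{\alpha t}e^{-ct}=e^{(\alpha-c)t}$, which equals $1$ for all $t$ only if $\alpha=c$. Choosing $\alpha\ne c$ (which is legitimate since $\alpha$ is any fixed positive number with $h\in\mathcal H$ for that $\alpha$ — one picks $\alpha$ large, forcing $\alpha>c$) gives $Q_t(0,\{0\})=e^{(\alpha-c)t}\ne e^{(\alpha-c)s}=Q_s(0,\{0\})$, so $\overline{Q}_{0,t}(\Omega)\ne\overline{Q}_{0,s}(\Omega)$ while $\Omega\in\mathcal F^0_s\cap\mathcal F^0_t$. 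This settles (i). One should remark that the dependence-on-$t$ is intrinsic and not an artefact of $h$, by Theorem~\ref{thm-independent}.

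For (ii), the plan is a proof by contradiction combined with (i). Suppose such a $\overline{Q}_x$ on $\mathcal{F}^0_\infty$ existed. For any $\Lambda\in\mathcal{F}^0_s$ we would have $\Lambda\cap\{s<\zeta\}\in\mathcal{F}^0_s$, and the consistency hypothesis would force, for all $t\ge s$, $\overline{Q}_x(\Lambda\cap\{s<\zeta\}\cap\{t<\zeta\}) = \overline{Q}_{x,t}(\Lambda\cap\{s<\zeta\})$; but $\{s<\zeta\}\supset\{t<\zeta\}$ since $\zeta$ is the lifetime, so the left side is $\overline{Q}_x(\Lambda\cap\{t<\zeta\})=\overline{Q}_{x,t}(\Lambda)$ — wait, the cleaner route is: the hypothesis says the restriction of $\overline{Q}_x$ to $\mathcal F^0_t\cap\{t<\zeta\}$ is $\overline{Q}_{x,t}$; applying this with $t$ and with $s<t$ and using that a set in $\mathcal F^0_s\cap\{s<\zeta\}$ that also lies in $\{t<\zeta\}$ belongs to both $\mathcal F^0_s\cap\{s<\zeta\}$ and $\mathcal F^0_t\cap\{t<\zeta\}$, one obtains that $\overline{Q}_{x,t}$ restricted to $\mathcal F^0_s\cap\{s<\zeta\}$ must agree with $\overline{Q}_{x,s}$ restricted to $\{t<\zeta\}$. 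In the finite-state example $\zeta=\infty$ a.s.\ (the process never dies), so $\{s<\zeta\}=\{t<\zeta\}=\Omega$ and this forces $\overline{Q}_{x,t}\!\restriction_{\mathcal F^0_s}=\overline{Q}_{x,s}$, contradicting (i).

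The main obstacle I anticipate is not the logical skeleton but making the phrases ``in general different'' / ``in general no measure'' rigorous in a way consistent with the paper's standing hypotheses: one must check that the toy example (or whatever example is chosen) genuinely is a quasi-regular positivity preserving coercive form in the paper's sense and that the chosen $\alpha$ admits an $h\in\mathcal H$, so that $\overline{Q}_{x,t}$ is even defined; and one must be careful that the lifetime-related sets $\{s<\zeta\}$ do not secretly rescue consistency (they don't, precisely because in the example $\zeta\equiv\infty$, but this needs a sentence). A secondary care point: since the definition only pins down $\overline{Q}_{x,t}$ for $q.e.$ $x$ (equivalence in Definition~\ref{equivalent}), statements (i)--(ii) should be read as holding for $x$ in the invariant Borel set $S$ of Lemma~\ref{invariantSet}, or simply for the specific $x$ in the example; I would phrase the conclusion accordingly. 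No heavy computation is needed beyond the elementary semigroup identity $Q_t1(x)=h(x)e^{\alpha t}P_th^{-1}(x)$ and its evaluation in the example.
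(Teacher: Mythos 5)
Your logical skeleton for (i)--(ii) tracks the paper's: use Lemma~\ref{lem:distribution} to reduce to the behaviour of $Q_{t-s}\mathbf{1}$, and then, for (ii), derive a contradiction between the consistency requirement and the observed $t$-dependence. But there are two genuine errors that would kill the argument as written, and they are not merely cosmetic.

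First, your computation in the toy example is wrong in a way that matters. With $E=\{0\}$, $T_t=e^{-ct}$, $h\equiv 1$, the transition semigroup of the $h$-associated process $\mathbf{M}^h$ is $P^h_t=T^h_t=e^{-\alpha t}T_t=e^{-(\alpha+c)t}$, and hence $Q_t(0,\{0\})=h(0)e^{\alpha t}P^h_t(h^{-1}I_{\{0\}})(0)=e^{\alpha t}e^{-(\alpha+c)t}=e^{-ct}$, \emph{not} $e^{(\alpha-c)t}$. The whole point of $h$-association is that the $\alpha$ and $h$ factors cancel and $Q_t$ is simply a quasi-continuous version of $T_t$; your purported $\alpha$-dependence is spurious. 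In the same vein, $\zeta$ is certainly not $\equiv\infty$ here: $P^h_0(\zeta>t)=e^{-(\alpha+c)t}<1$, so the $h$-process dies, and the final step of your (ii), which rests entirely on ``$\{s<\zeta\}=\{t<\zeta\}=\Omega$'', collapses.

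Second, and more fundamentally, your toy example cannot realize the phenomenon you need. For $\mathcal{E}(u,v)=cuv$ to be positivity preserving you must have $c\ge 0$ (take $u>0$: $\mathcal{E}(u,u^+)=cu^2$), but then $T_t=e^{-ct}$ is already sub-Markovian. In that regime $Q_{t-s}(x,E)\le 1$, and in fact one can exhibit a single $\overline{Q}_x$ on $\mathcal{F}^0_\infty$ satisfying the consistency in (ii): e.g.\ $\overline{Q}_x=\frac{c}{\alpha+c}\,e^{\alpha\zeta}\cdot P^h_x$ works, as one checks directly. So the example proves (i) (the numbers $e^{-cs}\ne e^{-ct}$ do differ) but actively disproves (ii) for itself. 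The paper's proof deliberately assumes $(Q_t)$ is \emph{not} sub-Markovian and constructs $\Gamma$ inside $A_{ts}=\{Q_{t-s}(\cdot,E)>1\}$; this yields the strict inequality $\overline{Q}_{x,s}(\Gamma)<\overline{Q}_{x,t}(\Gamma;s<\zeta)$, which is what contradicts the monotonicity $\overline{Q}_x(\Gamma;s<\zeta)\ge\overline{Q}_x(\Gamma;t<\zeta)$ coming from $\{s<\zeta\}\supset\{t<\zeta\}$. Any argument for (ii) must produce that strict ``$<$'' in the right direction; a sub-Markovian example cannot, and the $\zeta\equiv\infty$ shortcut is not available. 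You would need either to drop the toy example and argue abstractly under the hypothesis that $(T_t)$ is not sub-Markovian (as the paper does), or to replace the one-point model by a genuinely non-sub-Markovian positivity preserving example (which requires at least a $2\times 2$ matrix with negative off-diagonal entries of the right sign pattern, and a verification of quasi-regularity).
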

\begin{proof}
(i) We construct an example to show this.  For $t>s,$ let $A_{ts}=\{x\in E: \hat{Q}_{s}\hat{h}(x)> 0 ~\mbox{and }~Q_{t-s}(x, E)> 1\},$ where $\hat{h}$ is some strictly positive  quasi-continuous $\alpha$-coexcessive function and $\hat{Q}_{t}\hat{h}(x)$ is defined by (\ref{Qt}) with the help of $\hat{h}$ and an $\hat{h}$-associated process of $(\hat{\mathcal{E}},D(\mathcal{E})).$  Assume that $(Q_t)_{t\geq 0}$ being a version of $(T_t)_{t\geq 0}$ is not sub-Markovian, then we can find $t>s$ such that  $m(A_{ts})>0.$  Take a compact set $F_k$ with $m(A_{ts}\cap F_k)>0$ such that $h$ is strictly positive and bounded on $F_k.$  Set $\Gamma=X^{-1}_s(A_{ts}\cap F_k)\in \mathcal{F}^0_s$, by (\ref{eq: Q_tf}) we get
\begin{align*}
\overline{Q}_{x,s}(\Gamma)=\overline{Q}_{x,s}[I_{A_{ts}\cap F_k}(X_s)]
=\int_E Q_s(x,dx_1)I_{A_{ts}\cap F_k}(x_1).
\end{align*}
One can check that $\overline{Q}_{x,s}(\Gamma)<\infty$ for all $x\in E.$ More over, if we set $B=\{x\in E:\overline{Q}_{x,s}(\Gamma)>0\},$ then we will  have $m(B)>0.$ Then for $x\in B$ by Lemma \ref{lem:distribution} we get
\begin{align}\label{Qt>Qs}
\overline{Q}_{x,t}&(\Gamma; s<\zeta)=\overline{Q}_{x,t}[I_{A_{ts}\cap F_k}(X_s)]=\overline{Q}_{x,t}[I_{A_{ts}\cap F_k}(X_s)I_E(X_t)]
\\
&=\int_E Q_s(x,dx_1)I_{A\cap F_k}(x_1) \int_EQ_{t-s}(x_1,dx_2)> \overline{Q}_{x,s}(\Gamma).\nonumber
\end{align}
Therefore, the restriction of $\overline{Q}_{x,t}$ on $\mathcal{F}^0_s\cap \{s<\zeta\}$ is in general different from $\overline{Q}_{x,s}.$

(ii) Suppose that there was a measure $\overline{Q}_{x}$ on $\mathcal{F}^0_\infty$ with the property specified as in (ii). Let $s<t$ and $\Gamma$ be as in the proof of (i).  Since $\{s<\zeta \}\supset \{t<\zeta \},$ we would have
$$ \overline{Q}_{x}(\Gamma;s<\zeta)\geq \overline{Q}_{x}(\Gamma;t<\zeta).
$$
But by (\ref{Qt>Qs}) we would have
$$\overline{Q}_{x}(\Gamma;s<\zeta)=\overline{Q}_{x,s}(\Gamma)<~ \overline{Q}_{x,t}(\Gamma;s<\zeta)= \overline{Q}_{x,t}(\Gamma;t<\zeta)= \overline{Q}_{x}(\Gamma;t<\zeta).
$$
This contradiction verifies the assertion (ii).
\end{proof}

\begin{remark}\label{remark-different from Oshima}
  The measure defined by (\ref{eq-distribution}) was first introduced in
 \cite[(3.3.11)]{Oshima13} for the dual of a semi-Dirichlet form, and was denoted by $\widehat{\mathbb{P}}_x$ with no subscript $t$.  The author did not notice that his definition of (3.3.11) is in fact dependent on $t$, and there is in general no single measure $\widehat{\mathbb{P}}_x$ satisfying his definition (3.3.11) simultaneously for different $t$.

\end{remark}

Although there is in general no single measure satisfying (\ref{eq-distribution}) simultaneously for different $t$, but  the canonical process $(X_t)_{t\geq 0}$ equipped with the whole family of distribution flows $(\overline{Q}_{x,t})_{t\geq 0}, ~x \in E,$ enjoys also Markov property. See Theorem \ref{thm-Pseudo-Markov} below. We use $(\theta_s)_{s\geq 0}$ to denote the usual time shift operators on $\Omega.$

\begin{theorem}\label{thm-Pseudo-Markov}
 Let $s\geq 0,~u\geq 0$ and $t=s+u$. Then for $\Gamma \in p\mathcal{F}^0_s$ and $Y \in p\mathcal{F}^0_{u}$,  we have
\begin{equation}\label{eq: Pseudo-Markov1}
\overline{Q}_{x,t}[\Gamma~(Y\comp\theta_s);t<\zeta]=
\overline{Q}_{x,s}[\Gamma~ \overline{Q}_{X_s,u}[Y;
u<\zeta]; s<\zeta].
\end{equation}
In particular, for $\Gamma \in p\mathcal{F}^0_s$ and $f\in p\mathcal{B}(E)$,
we have
\begin{equation}\label{eq: Pseudo-Markov2}
\overline{Q}_{x,s+u}[\Gamma~ f(X_{s+u});s+u<\zeta]=\overline{Q}_{x,s}[\Gamma ~ (Q_{u}f)(X_s);s<\zeta].
\end{equation}
\end{theorem}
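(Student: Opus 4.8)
The plan is to reduce the whole statement to the simple Markov property of the special standard process $\mathbf{M}^h$ that underlies Definition \ref{def-Qtx}, together with the elementary bookkeeping of the $h$-transform factors $e^{\alpha\cdot}$ and $h(X_\cdot)^{-1}$. First note that since $\mathcal{F}^0_s\subseteq\mathcal{F}^h_s$, any $\Gamma\in p\mathcal{F}^0_s$ lies in $p\mathcal{F}^h_s$, any $Y\in p\mathcal{F}^0_u$ lies in $p\mathcal{F}^h$, and $\Gamma\,(Y\circ\theta_s)\in p\mathcal{F}^0_t$, so the left-hand side of (\ref{eq: Pseudo-Markov1}) is well defined. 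It suffices to prove (\ref{eq: Pseudo-Markov1}), because (\ref{eq: Pseudo-Markov2}) then follows by taking $Y=f(X_u)$: indeed $Y\circ\theta_s=f(X_{s+u})$, and (\ref{eq: Q_tf}) identifies $\overline{Q}_{X_s,u}[f(X_u);u<\zeta]=Q_uf(X_s)$.

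Next I would record the three pointwise identities on $\Omega$ that drive the computation. With $t=s+u$: (a) $e^{\alpha t}=e^{\alpha s}e^{\alpha u}$; (b) $X_{s+u}=X_u\circ\theta_s$, hence $h(X_t)=h(X_u)\circ\theta_s$; and (c) on $\{s<\zeta\}$ one has $\zeta=s+\zeta\circ\theta_s$, so $\{t<\zeta\}=\{s<\zeta\}\cap\theta_s^{-1}\{u<\zeta\}$. Substituting these into the defining formula (\ref{eq-distribution}) and using the convention $g(\Delta)=0$, one obtains
\begin{equation*}
\overline{Q}_{x,t}[\Gamma\,(Y\circ\theta_s);t<\zeta]
= h(x)\,E^h_x\Big[\Gamma\, e^{\alpha s}\,I_{\{s<\zeta\}}\,\Big(\tfrac{e^{\alpha u}Y}{h(X_u)}\,I_{\{u<\zeta\}}\Big)\circ\theta_s\Big].
\end{equation*}
Now set $Z:=\tfrac{e^{\alpha u}Y}{h(X_u)}\,I_{\{u<\zeta\}}\in p\mathcal{F}^h$ and observe $\Gamma\, e^{\alpha s}\,I_{\{s<\zeta\}}\in p\mathcal{F}^h_s$. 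The simple Markov property $E^h_x[\,Z\circ\theta_s\mid\mathcal{F}^h_s\,]=E^h_{X_s}[Z]$, which holds for $Z\in b\mathcal{F}^h$ and extends to $p\mathcal{F}^h$ by monotone convergence, gives
\begin{equation*}
\overline{Q}_{x,t}[\Gamma\,(Y\circ\theta_s);t<\zeta]
= h(x)\,E^h_x\Big[\Gamma\, e^{\alpha s}\,I_{\{s<\zeta\}}\,E^h_{X_s}[Z]\Big].
\end{equation*}
On $\{s<\zeta\}$ we have $X_s\in E$ and $h(X_s)>0$, and applying (\ref{eq-distribution}) at the point $X_s$ yields $E^h_{X_s}[Z]=h(X_s)^{-1}\,\overline{Q}_{X_s,u}[Y;u<\zeta]$. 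Plugging this in and recognizing the right-hand side as the integral defining $\overline{Q}_{x,s}$ produces exactly (\ref{eq: Pseudo-Markov1}).

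The step needing real care is not the Markov application but the fact that $\overline{Q}_{x,t}$ is only $\sigma$-finite, so the quantities above may equal $+\infty$ and dominated convergence is unavailable; since every function in sight is nonnegative, however, all identities are legitimate in $[0,\infty]$, the extension of the Markov property is by monotone convergence, and the interchange with the outer $E^h_x$-integration is Tonelli. Two minor points to verify along the way: that $x\mapsto\overline{Q}_{x,u}[Y;u<\zeta]=h(x)\,E^h_x[\,\tfrac{e^{\alpha u}Y}{h(X_u)};u<\zeta]$ is measurable (immediate since $\mathbf{M}^h$ is a right, indeed special standard, process, so $x\mapsto E^h_x[\cdot]$ is appropriately measurable), and that the convention $g(\Delta)=0$ is kept throughout so the cemetery $\Delta$ causes no difficulty in the factors $h(X_u)^{-1}I_{\{u<\zeta\}}$ and in $Y$, $f$.
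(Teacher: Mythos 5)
Your proof is correct and follows essentially the same route as the paper's: unwind Definition \ref{def-Qtx}, split the $e^{\alpha t}/h(X_t)$ factor across the shift $\theta_s$, invoke the simple Markov property of the $h$-associated process $\mathbf{M}^h$ to replace $E^h_x[\,\cdot\circ\theta_s\mid\mathcal{F}\,]$ by $E^h_{X_s}[\,\cdot\,]$, and reassemble the result as $\overline{Q}_{x,s}[\Gamma\,\overline{Q}_{X_s,u}[Y;u<\zeta];s<\zeta]$. The only cosmetic difference is that you condition on $\mathcal{F}^h_s$ and split $e^{\alpha t}=e^{\alpha s}e^{\alpha u}$ explicitly, whereas the paper conditions on $\mathcal{F}^0_{s^+}$ (remarking that $\mathbf{M}^h$ is Markov for $(\mathcal{F}^0_{t^+})$) and keeps $e^{\alpha t}$ outside; both are valid since $\Gamma\in p\mathcal{F}^0_s\subset\mathcal{F}^0_{s^+}\subset\mathcal{F}^h_s$.
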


\begin{proof}
Denote by $\mathcal{F}^0_{t^+}= \bigcap_{s>t}\mathcal{F}^0_s,$ then $\mathbf{M}^h$ is an $(\mathcal{F}^0_{t^+})_{t\geq0}$ Markov process.  Therefore, we have
\begin{align*}
\overline{Q}_{x,t}[\Gamma~(Y\comp\theta_s);t<\zeta]
&=h(x)E^h_x[\frac{e^{\alpha t}\Gamma~(Y\comp\theta_s)}{h(X_t)};t<\zeta]\\
&=h(x)E^h_x[e^{\alpha t}\Gamma I_{\{s<\zeta\}}E^h_x(\frac{Y~I_{(u<\zeta)}}
{h(X_u)}\comp\theta_s|\mathcal{F}^0_{s^+})]\\
&=h(x)E^h_x[e^{\alpha t}\Gamma I_{\{s<\zeta\}}E^h_{X_s}(\frac{Y}{h(X_u)};u<\zeta)]\\
&=\overline{Q}_{x,s}[\Gamma~\overline{Q}_{X_s,u}[Y;u<\zeta];s<\zeta],
\end{align*}
proving (\ref{eq: Pseudo-Markov1}). Letting $Y=f(X_u)$ we get  (\ref{eq: Pseudo-Markov2}).
\end{proof}

The corollary below explores general relations between $\overline{Q}_{x,s}$ and $\overline{Q}_{x,t}$. In (\ref{eq:generalFk}) we take an $\mathcal{E}$-nest $(F_k)_{k\geq1}$ consisting of compact sets such that $h\in C(\{F_k\})$, which ensures that the right hand side of (\ref{eq:generalFk}) is finite.  Note that when $s<t,$ by (\ref{eq:general relation 1}) it may happen that $\overline{Q}_{x,s}[\Gamma;s<\zeta]<\infty$ but $\overline{Q}_{x,t}[\Gamma;t<\zeta]=\infty$, because it may happen that $Q_{u}(x, E)=\infty$ for q.e. $x\in E$.
\begin{corollary}\label{thm-genaral-relation}
Let $s\geq 0,~u\geq 0$ and $t=s+u$. Then for $\Gamma \in p\mathcal{F}^{0}_s$ we have
\begin{equation}\label{eq:generalFk}
\overline{Q}_{x,t}[\Gamma I_{F_k}(X_t);t<\zeta]=\overline{Q}_{x,s}
[\Gamma~ Q_{u}(X_s, F_k);s<\zeta],~\forall~ k\geq 1,
\end{equation}
and
\begin{equation}\label{eq:general relation 1}
\overline{Q}_{x,t}[\Gamma;t<\zeta]=\overline{Q}_{x,s}
[\Gamma~ Q_{u}(X_s, E);s<\zeta].
\end{equation}

\end{corollary}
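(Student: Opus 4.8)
The plan is to obtain both identities from the Markov property already established in Theorem~\ref{thm-Pseudo-Markov}, more precisely from formula (\ref{eq: Pseudo-Markov2}), followed by a monotone passage to the limit along the nest $(F_k)_{k\geq1}$.

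For (\ref{eq:generalFk}) one simply applies (\ref{eq: Pseudo-Markov2}) with $f=I_{F_k}\in p\mathcal{B}(E)$: the left-hand side becomes $\overline{Q}_{x,t}[\Gamma I_{F_k}(X_t);t<\zeta]$ and the right-hand side becomes $\overline{Q}_{x,s}[\Gamma\,(Q_uI_{F_k})(X_s);s<\zeta]=\overline{Q}_{x,s}[\Gamma\,Q_u(X_s,F_k);s<\zeta]$, which is exactly the assertion. The finiteness recorded before the statement is automatic here: since $h\in C(\{F_k\})$ is strictly positive and continuous on the compact set $F_k$, the constant $c_k:=\inf_{F_k}h$ is strictly positive, so $Q_u(y,F_k)=h(y)e^{\alpha u}E^{h}_y[I_{F_k}(X_u)h(X_u)^{-1};u<\zeta]\leq c_k^{-1}e^{\alpha u}h(y)<\infty$ for every $y$.

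For (\ref{eq:general relation 1}) one lets $k\to\infty$ in (\ref{eq:generalFk}). On the right, $Q_u(y,F_k)\uparrow Q_u(y,\bigcup_kF_k)=Q_u(y,E)$ for the relevant $y$, because $Q_u(y,\cdot)$ is a measure and $E\setminus\bigcup_kF_k$ is $\mathcal{E}$-exceptional (hence $m$-null, and, by \cite[Proposition 4.2]{MR95}, $\mathcal{E}^h$-exceptional, so not charged by $\mathbf{M}^h$); monotone convergence for $\overline{Q}_{x,s}$ then shows that the right-hand side of (\ref{eq:generalFk}) increases to $\overline{Q}_{x,s}[\Gamma\,Q_u(X_s,E);s<\zeta]$. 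On the left, since $(F_k)$ is an $\mathcal{E}$-nest and $\mathbf{M}^h$ is properly $h$-associated, the process does not leave $\bigcup_kF_k$ before its lifetime, so $I_{F_k}(X_t)\uparrow 1$ on $\{t<\zeta\}$ outside a $P^h_x$-null, hence $\overline{Q}_{x,t}$-null, set; monotone convergence then shows that the left-hand side of (\ref{eq:generalFk}) increases to $\overline{Q}_{x,t}[\Gamma;t<\zeta]$. Equating the two limits yields (\ref{eq:general relation 1}).

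The step needing the most care is the assertion that $X_t\in\bigcup_kF_k$ on $\{t<\zeta\}$ almost surely (equivalently, that $X_s$ avoids the $m$-null set where $Q_u(\cdot,\bigcup_kF_k)<Q_u(\cdot,E)$), together with the precise class of admissible starting points $x$: as in Lemma~\ref{invariantSet}, it may be necessary to pass to an $\mathbf{M}^h$-invariant Borel set $S$ with $E\setminus S$ $\mathcal{E}$-exceptional and read the identities for $x\in S$. Everything else is a routine combination of (\ref{eq: Pseudo-Markov2}) with the monotone convergence theorem, relying throughout on the nonnegativity of $\Gamma$, of the indicators $I_{F_k}$ and of the kernels.
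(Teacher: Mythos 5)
Your derivation of (\ref{eq:generalFk}) is exactly the paper's: apply (\ref{eq: Pseudo-Markov2}) with $f=I_{F_k}$.

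For (\ref{eq:general relation 1}) you take a genuinely different route. The paper simply substitutes $f=I_E$ into (\ref{eq: Pseudo-Markov2}) (which is stated for arbitrary $f\in p\mathcal{B}(E)$, with no finiteness requirement, and with the standing convention $f(\Delta)=0$ so that $I_E(X_t)=1$ on $\{t<\zeta\}$); this gives the identity immediately, including the possibly-infinite case flagged in the remark preceding the corollary. You instead let $k\to\infty$ in (\ref{eq:generalFk}) and appeal to monotone convergence on both sides. Your argument is correct, but it requires two facts that the direct substitution avoids: that $Q_u(y,\cdot)$ does not charge $E\setminus\bigcup_kF_k$ for relevant $y$ (since this set is $\mathcal{E}$-exceptional), and that the process stays in $\bigcup_kF_k$ before $\zeta$ outside a $P^h_x$-null set, so that $I_{F_k}(X_t)\uparrow 1$ on $\{t<\zeta\}$ $\overline{Q}_{x,t}$-a.e. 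Both are true and the finiteness remark you add about $c_k=\inf_{F_k}h>0$ is a useful observation that explains the role of the nest in (\ref{eq:generalFk}), but as a proof of (\ref{eq:general relation 1}) the limiting argument is a detour; the one-line substitution buys you the same conclusion without invoking the invariance/nest properties of $\mathbf{M}^h$.
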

\begin{proof}
In (\ref{eq: Pseudo-Markov2}) taking $f=I_{F_k}$ we obtain (\ref{eq:generalFk}),
letting $f=I_E$ we obtain (\ref{eq:general relation 1}).
\end{proof}

We shall show that the process $(X_t)_{t\geq 0}$ equipped with distribution flows $(\overline{Q}_{x,t})_{t\geq 0},~x \in E$, enjoys also strong Markov property. Before showing that we should first augment the natural filtration to make it right continuous and universally measurable, and hence suitable to accommodate stopping times. Note that the distribution $P_x^h$ of the $h$-associated processes depends on $h.$  For different $h$,  the corresponding $P_x^{h}$'s might be not all equivalent ( at least it is not clear for us). Hence we can not directly make use of the augmented filtration $(\mathcal{F}_t^h)_{t\geq0}$. Thus we have to augment the natural filtration $(\mathcal{F}_t^0)_{t\geq0}$ with the $\sigma$-finite distribution flow $(\overline{Q}_{x,t})_{t\geq 0}$. This is a new and nontrivial task.

\section{Augmentation of the filtration}\label{secaugmentfiltration}
As in the previous section, in this section we fix an $h\in \mathcal{H}.$ Let
$
\mathbf{M}^{h}:=(\Omega,\mathcal{F}^h,(\mathcal{F}_t^h)_{t\geq0},(X_t)_{t\geq0},
(P_x^h)_{x\in E_\Delta})$
with transition semigroup $(P_t^h)_{t\geq0}$ and life time $\zeta$ be a special standard process properly $h$-associated with $(\mathcal{E},D(\mathcal{E}))$.   We denote by $\mathcal{B}(E_{\Delta})$ the Borel sets of $E_{\Delta}$ and by $\mathcal{P}(E_{\Delta})$ the collection of all probability measures on $\mathcal{B}(E_{\Delta})$.
For the convenience of our further discussion, from now on  we fix an  $\mathcal{E}$-nest  $(F_k)_{k\geq 1}$  consisting of compact sets such that $h \in C(\{F_k\})$ and an $\mathbf{M}^h$- invariant set $S$  constructed in Lemma \ref{invariantSet} (taking $h_1=h_2=h$).

For $\mu \in \mathcal{P}(E_{\Delta}),$ we write,
 \begin{align}
P^h_{\mu}(\Lambda):= \int_{E_{\Delta}}
P^h_{x}(\Lambda)\mu(dx), ~\forall~ \Lambda \in \mathcal{F}^h.
\end{align}

Let $(\mathcal{F}^0_t)_{t\geq0}$ be the natural filtration as defined by (\ref{eq-natural-filtration}). For $\mu \in \mathcal{P}(E_{\Delta}),$ we define for  $t\geq 0$,
\begin{align}
\overline{Q}_{\mu,t}(\Lambda):=\overline{Q}_{\mu,t}(\Lambda;t<\zeta)=
\int_{E_{\Delta}}
\overline{Q}_{x,t}(\Lambda;t<\zeta)\mu(dx), ~\forall~ \Lambda \in \mathcal{F}^0_t,
\end{align}
 where $\overline{Q}_{x,t}$ is defined by (\ref{eq-distribution}) (with the convention that $\overline{Q}_{\Delta,t}(\Lambda;t<\zeta)=0$). Note that if $\Lambda = \{X_t\in F_k, X_0\in F_k\}$, then $\overline{Q}_{\mu,t}(\Lambda)<\infty$. Hence, $\overline{Q}_{\mu,t}$ is a $\sigma$-finite measure on $\mathcal{F}^0_t.$

Let $\mathcal{F}^0_{t^+}= \bigcap_{s>t}\mathcal{F}^0_s.$  For $\mu\in\mathcal{P}(E_{\Delta})$ we define
\begin{align}\label{ali-Mmut}
\mathcal{M}^{\mu}_t:=\{\Lambda\subset\Omega~|~&\exists ~ \Lambda'\in\mathcal{F}^0_t, \,\,\Gamma\in\mathcal{F}^0_{t^+}, \,\,\mbox{s.t.} \\
&\Lambda\triangle\Lambda'\subset\Gamma,\,\,\mbox{and}~~\overline{Q}_{\mu,T}
(\Gamma;T<\zeta)=0, ~~\forall ~T>t \}.\nonumber
\end{align}
We extend $\overline{Q}_{\mu,t}$ to $\mathcal{M}^{\mu}_t$, denoted again by $\overline{Q}_{\mu,t}$,  by setting $\overline{Q}_{\mu,t}(\Lambda)=\overline{Q}_{\mu,t}(\Lambda')$ if $\Lambda$ and $\Lambda'$ are related as in (\ref{ali-Mmut}).

For our purpose of comparison, we define also
\begin{align}\label{ali-Gmu1}
\mathcal{G}^{\mu,1}_t:=\{\Lambda\subset\Omega~|~&\exists ~ \Lambda'\in\mathcal{F}^0_t, \,\,\Gamma\in\mathcal{F}^0_{t^+}, \,\,\mbox{s.t.}\\
&\Lambda\triangle\Lambda'\subset\Gamma~ \mbox{and} ~ P^h_{\mu}(\Gamma;t<\zeta)=0\}, \nonumber
\end{align}
and extend $P^h_{\mu}(\cdot; t<\zeta)$ to $\mathcal{G}^{\mu,1}_t$, denoted again by $P^h_{\mu}(\cdot; t<\zeta)$, by setting $P^h_{\mu}(\Lambda; t<\zeta):=P^h_{\mu}(\Lambda'; t<\zeta)$ if $\Lambda$ and $\Lambda'$ are related as in (\ref{ali-Gmu1}).

\begin{lemma}\label{lem-Mmut}
Let $\mathcal{M}^{\mu}_t$ and $\mathcal{G}^{\mu,1}_t$ be defined as above. Then the following assertions hold true.

(i)
$\mathcal{M}^{\mu}_t= \mathcal{G}^{\mu,1}_t$.

(ii) $\overline{Q}_{\mu,t}$ is a well defined $\sigma$-finite measure on $\mathcal{M}^{\mu}_t$, $P^h_{\mu}(\cdot; t<\zeta)$ is a well defined finite measure on
$\mathcal{M}^{\mu}_t$.

(iii) $\overline{Q}_{\mu,t}$ and $P^h_{\mu}(\cdot; t<\zeta)$ are absolutely continuous to each other.

(iv) Both $\overline{Q}_{\mu,t}$ and
 $P^h_{\mu}(\cdot; t<\zeta)$ are complete on $\mathcal{M}^{\mu}_t\bigcap \{t<\zeta\}$.
\end{lemma}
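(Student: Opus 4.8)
**The plan is to prove each of the four assertions in turn, using the $h$-transform to convert statements about $\overline{Q}_{\mu,t}$ into statements about the honest probability measure $P^h_\mu$.**

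The key observation that drives everything is the following reformulation of the defining relation (\ref{eq-distribution}): on $\mathcal{F}^0_t \cap \{t<\zeta\}$, the measure $\overline{Q}_{\mu,t}$ has density $h(X_0)\,e^{\alpha t}/h(X_t)$ with respect to $P^h_\mu(\cdot\,;t<\zeta)$, and conversely $P^h_\mu(\cdot\,;t<\zeta)$ has density $h(X_t)\,e^{-\alpha t}/h(X_0)$ with respect to $\overline{Q}_{\mu,t}$. Since $h$ is strictly positive, both densities are strictly positive on $\{t<\zeta\}$ (where $X_t \in E$, so $h(X_t)>0$); they need not be bounded, but on the sets $\{X_0 \in F_k, X_t\in F_j\}$ where $h$ is bounded away from $0$ and $\infty$ they are, and these sets exhaust $\{t<\zeta\}$. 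This immediately gives (iii): a set $\Gamma \in \mathcal{F}^0_{t^+}$ (or in $\mathcal{F}^0_t$) satisfies $\overline{Q}_{\mu,t}(\Gamma;t<\zeta)=0$ if and only if $P^h_\mu(\Gamma;t<\zeta)=0$, because the Radon--Nikodym density is strictly positive. This equivalence of null sets is then exactly what is needed for (i): the defining conditions of $\mathcal{M}^\mu_t$ and of $\mathcal{G}^{\mu,1}_t$ differ only in replacing ``$\overline{Q}_{\mu,T}(\Gamma;T<\zeta)=0$ for all $T>t$'' by ``$P^h_\mu(\Gamma;t<\zeta)=0$''; by the null-set equivalence the first is equivalent to ``$P^h_\mu(\Gamma;T<\zeta)=0$ for all $T>t$'', and since $\Gamma \in \mathcal{F}^0_{t^+} \subset \mathcal{F}^0_T$ and $\{T<\zeta\}\uparrow\{t\le\zeta\}\supset\{t<\zeta\}$ as $T\downarrow t$, monotone convergence makes this equivalent to $P^h_\mu(\Gamma;t<\zeta)=0$. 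Hence $\mathcal{M}^\mu_t=\mathcal{G}^{\mu,1}_t$.

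For (ii) I would first check well-definedness: if $\Lambda \triangle \Lambda_1' \subset \Gamma_1$ and $\Lambda\triangle\Lambda_2'\subset\Gamma_2$ are two representations as in (\ref{ali-Mmut}), then $\Lambda_1'\triangle\Lambda_2'\subset\Gamma_1\cup\Gamma_2$ is $\overline{Q}_{\mu,T}(\cdot;T<\zeta)$-null for all $T>t$, hence (taking $T\downarrow t$, or directly since $\Lambda_1'\triangle\Lambda_2'\in\mathcal{F}^0_t$) $\overline{Q}_{\mu,t}$-null and $P^h_\mu(\cdot;t<\zeta)$-null, so $\overline{Q}_{\mu,t}(\Lambda_1')=\overline{Q}_{\mu,t}(\Lambda_2')$ and likewise for $P^h_\mu$. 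That $\mathcal{M}^\mu_t$ is a $\sigma$-algebra and the extensions are measures is then the standard completion argument (closure under complements is immediate since $\Lambda\triangle\Lambda'=\Lambda^c\triangle\Lambda'^c$; closure under countable unions uses a countable union of the exceptional $\Gamma$'s, which is still null for every $T>t$). $\sigma$-finiteness of $\overline{Q}_{\mu,t}$ on $\mathcal{M}^\mu_t$ follows from the already-noted fact that $\{X_0\in F_k,X_t\in F_k\}$ has finite $\overline{Q}_{\mu,t}$-measure and these sets increase to $\{t<\zeta\}$ while the complement $\{t\ge\zeta\}$ carries no mass by construction. Finally (iv): restricted to $\{t<\zeta\}$, $\mathcal{M}^\mu_t\cap\{t<\zeta\}$ contains every subset of every $\overline{Q}_{\mu,t}$-null set by the very definition (\ref{ali-Mmut}) — take $\Lambda'=\emptyset$ and $\Gamma$ the enclosing null set — so both measures are complete there, and by (iii) their null sets coincide so completeness transfers between them.

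\textbf{The main obstacle} is the unboundedness of the Radon--Nikodym density $h(X_0)e^{\alpha t}/h(X_t)$, which forces one to be careful that ``$\overline{Q}_{\mu,t}$-null'' and ``$P^h_\mu(\cdot;t<\zeta)$-null'' genuinely coincide as claimed in (iii) even though neither measure dominates the other with a bounded density; the remedy, used throughout, is to localize on the sets $\{X_0\in F_k,X_t\in F_j\}\cap\{t<\zeta\}$ where $h(X_0)$ and $h(X_t)$ are both bounded above and below, observe the equivalence there, and then let $k,j\to\infty$. A secondary subtlety is the passage in (i) from ``for all $T>t$'' (in the definition of $\mathcal{M}^\mu_t$) to a single condition at time $t$; this is handled by the monotone-limit argument on $\{T<\zeta\}\downarrow\{t<\zeta\}$ together with the observation that the relevant $\Gamma$ lies in $\mathcal{F}^0_{t^+}\subset\bigcap_{T>t}\mathcal{F}^0_T$, so it is simultaneously $\mathcal{F}^0_T$-measurable for every $T>t$.
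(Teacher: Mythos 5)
Your proof is correct and takes essentially the same approach as the paper: establish the mutual absolute continuity of $\overline{Q}_{\mu,t}$ and $P^h_{\mu}(\cdot;t<\zeta)$ via the strictly positive Radon--Nikodym density $h(X_0)e^{\alpha t}/h(X_t)$ on $\{t<\zeta\}$, then transfer null-set conditions between the two measures and pass from ``for all $T>t$'' to the single condition at $t$ by monotone convergence (using $\{T<\zeta\}\uparrow\{t<\zeta\}$ as $T\downarrow t$, not $\{t\le\zeta\}$ as you wrote). The ``main obstacle'' you flag is actually a non-issue for (i) and (iii): strict positivity of the density suffices for equivalence of null sets without any boundedness or localization, and the $F_k$-localization is only genuinely needed for the $\sigma$-finiteness part of (ii).
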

\begin{proof}
 Assertion (i) follows from the facts that for $\Gamma\in\mathcal{F}^0_{t^+}$, $P^h_{\mu}(\Gamma;t<\zeta)=0$ if and only if $P^h_{\mu}(\Gamma;T<\zeta)=0$ for all $T>t$, and $P^h_{\mu}(\Gamma;T<\zeta)=0$ if and only if $\overline{Q}_{\mu,t}(\Gamma;T<\zeta)=0$. Assertion (ii) is easy to check and we leave it to the reader. Assertion (iii) follows from the fact that $\overline{Q}_{\mu,t}(\Lambda';t<\zeta)=0$  if and only $P^h_{\mu}(\Lambda';t<\zeta)=0$ for $\Lambda'\in\mathcal{F}^0_t$. To check Assertion (iv), assume that $\Lambda \in \mathcal{G}^{\mu,1}_t$ and $P^h_{\mu}(\Lambda;t<\zeta)=0$. Then we can find $\Lambda'\in\mathcal{F}^0_t$ and $\Gamma\in\mathcal{F}^0_{t^+}$ satisfying $\Lambda\triangle\Lambda'\subset\Gamma$, $P^{h}_{\mu}(\Lambda';t<\zeta)=0$ and $P^h_{\mu}(\Gamma;t<\zeta)=0$. For any $\Lambda_0 \subset \Lambda$, we have $\Lambda_0\triangle\Lambda'\subset(\Lambda'\cup\Gamma)\in\mathcal{F}^0_{t^+}$
 and $P^h_{\mu}(\Lambda'\cup\Gamma;t<\zeta)=0$. Therefore, $\Lambda_0 \in \mathcal{G}^{\mu,1}_t$ and $P^h_{\mu}(\Lambda_0;t<\zeta)=P^{h}_{\mu}(\Lambda';t<\zeta)=0$, proving that $P^h_{\mu}(\cdot; t<\zeta)$ is complete. Employing Assertion (iii) we see that $\overline{Q}_{\mu,t}$ is also complete.
 \end{proof}
Recall that $\mathcal{H}$ is the collection of all strictly positive quasi-continuous $\alpha$-excessive functions in $(\mathcal{E}, D(\mathcal{E}))$. For any $h\in \mathcal{H},$
we denote by $(\mathcal{F}^{h,\mu}_{t})_{t\geq 0}$ the $P^h_{\mu}$-augmentation  of the filtration $(\mathcal{F}^{0}_{t})_{t\geq 0}$ in $\mathcal{F}^{h}_{\infty}$, that is,
\begin{align}\label{ali-Fhtmu}
\mathcal{F}^{h,\mu}_t:=\{\Lambda\subset\Omega~|~&\exists ~ \Lambda'\in\mathcal{F}^0_t, \,\,\Gamma\in\mathcal{F}^h_{\infty}, \,\,\mbox{s.t.}\\
&\Lambda\triangle\Lambda'\subset\Gamma~ \mbox{and} ~ P^h_{\mu}(\Gamma)=0~\}.
\nonumber
\end{align}
We use the same notation $P^h_{\mu}$ to denote its extension to $(\mathcal{F}^{h,\mu}_{t})_{t\geq 0}$.

\begin{lemma}\label{thm-relation M and F}
(i) If $\Lambda\in\mathcal{M}^{\mu}_t$, then $\Lambda\cap\{t<\zeta\}\in\mathcal{F}^{h,\mu}_t$ for any $h\in\mathcal{H}$.

(ii) For any $h\in \mathcal{H}$ we have
\begin{align}\label{eq: relation Mmu and F}
\overline{Q}_{\mu,t}&(\Lambda;t<\zeta)=E^{h}_{h\cdot \mu}[\frac{e^{\alpha t}I_{\Lambda}}{h(X_t)};t<\zeta]\\
&:=\int_{E_\Delta} h(x)E^{h}_x[\frac{e^{\alpha t}I_{\Lambda}}{h(X_t)};t<\zeta]\mu(dx), ~\forall ~ \Lambda\in\mathcal{M}^{\mu}_t. \nonumber
\end{align}
In particular, if $\mu=\delta_x$ for some  $x\in E$, then for any $h\in \mathcal{H}$ we have
\begin{equation}\label{eq: relation M and F}
\overline{Q}_{x,t}(\Lambda;t<\zeta)=h(x)E^{h}_x[\frac{e^{\alpha t}I_{\Lambda}}{h(X_t)};t<\zeta], ~\forall ~ \Lambda\in\mathcal{M}^{\mu}_t.
\end{equation}

(iii) If $s<t$, then $\mathcal{M}^{\mu}_s\subset\mathcal{M}^{\mu}_t$.
\end{lemma}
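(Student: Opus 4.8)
The plan is to prove part (ii) first; parts (i) and (iii) then follow quickly. For the $h$ fixed in this section, (ii) restricted to $\Lambda\in\mathcal{F}^0_t$ is nothing but the definition (\ref{eq-distribution}) of $\overline{Q}_{x,t}$ integrated in $x$ against $\mu$, combined with the notation $E^{h}_{h\cdot\mu}$ introduced in the statement. For a general $h'\in\mathcal{H}$, with some special standard process $\mathbf{M}^{h'}$ properly $h'$-associated with $(\mathcal{E},D(\mathcal{E}))$, this is no longer automatic, and the key input is Theorem \ref{thm-independent}: by Lemma \ref{lem:distribution}, $\overline{Q}_{x,t}$ restricted to $\mathcal{F}^0_t$ is determined by the kernels $Q_s(x,\cdot)$, $0\le s\le t$, and by Lemma \ref{invariantSet} these coincide with the corresponding kernels $Q^{(h')}_s(x,\cdot)$ on the common invariant Borel set $S$; re-running the telescoping computation in the proof of Lemma \ref{lem:distribution} with $h'$ and $\mathbf{M}^{h'}$ in place of $h$ and $\mathbf{M}^{h}$ then gives $\overline{Q}_{x,t}(\Lambda;t<\zeta)=h'(x)E^{h'}_x[\tfrac{e^{\alpha t}I_\Lambda}{h'(X_t)};t<\zeta]$ for $\Lambda\in\mathcal{F}^0_t$, and integrating against $\mu$ yields (\ref{eq: relation Mmu and F}) on $\mathcal{F}^0_t$.

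To pass from $\mathcal{F}^0_t$ to $\mathcal{M}^{\mu}_t$ I would invoke Lemma \ref{lem-Mmut}(i), $\mathcal{M}^{\mu}_t=\mathcal{G}^{\mu,1}_t$: given $\Lambda\in\mathcal{M}^{\mu}_t$ choose $\Lambda'\in\mathcal{F}^0_t$ and $\Gamma\in\mathcal{F}^0_{t^+}$ with $\Lambda\triangle\Lambda'\subset\Gamma$ and $P^h_{\mu}(\Gamma;t<\zeta)=0$. Then $\overline{Q}_{\mu,t}(\Lambda;t<\zeta)=\overline{Q}_{\mu,t}(\Lambda';t<\zeta)$ by the definition of the extension of $\overline{Q}_{\mu,t}$ to $\mathcal{M}^{\mu}_t$, this equals $E^{h}_{h\cdot\mu}[\tfrac{e^{\alpha t}I_{\Lambda'}}{h(X_t)};t<\zeta]$ by the $\mathcal{F}^0_t$ case, and since $I_{\Lambda'}=I_{\Lambda}$ off the $P^h_{\mu}$-null set $\Gamma$ the last quantity is unchanged on replacing $\Lambda'$ by $\Lambda$. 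For a general $h'$ the same works, once one observes---using the $\mathcal{F}^0_t$ identity already proved together with the strict positivity of $h$ and $h'$---that $P^{h'}_{\mu}$ and $P^h_{\mu}$ have the same null sets below level $t$ restricted to $\{t<\zeta\}$, so that the witness $\Gamma$ is also $P^{h'}_{\mu}$-null there.

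Part (i) is then easy: with $\Lambda',\Gamma$ as in (\ref{ali-Mmut}) one has $(\Lambda\cap\{t<\zeta\})\triangle(\Lambda'\cap\{t<\zeta\})\subset\Gamma\cap\{t<\zeta\}$, where $\Lambda'\cap\{t<\zeta\}\in\mathcal{F}^0_t$ (as $\{t<\zeta\}=\{X_t\ne\Delta\}\in\mathcal{F}^0_t$) and $\Gamma\cap\{t<\zeta\}\in\mathcal{F}^0_{t^+}\subset\mathcal{F}^h_{\infty}$; so it suffices to see $P^h_{\mu}(\Gamma\cap\{t<\zeta\})=0$. For each $T>t$ one has $\Gamma\in\mathcal{F}^0_T$, so by (ii) applied on $\mathcal{F}^0_T$ and $h>0$, the equality $\overline{Q}_{\mu,T}(\Gamma;T<\zeta)=0$ forces $P^h_{\mu}(\Gamma;T<\zeta)=0$; letting $T\downarrow t$ and using $\{T<\zeta\}\uparrow\{t<\zeta\}$ gives $P^h_{\mu}(\Gamma\cap\{t<\zeta\})=0$, whence $\Lambda\cap\{t<\zeta\}\in\mathcal{F}^{h,\mu}_t$ for every $h\in\mathcal{H}$. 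Part (iii) is immediate: if $s<t$ then $\mathcal{F}^0_s\subset\mathcal{F}^0_t$ and $\mathcal{F}^0_{s^+}\subset\mathcal{F}^0_{t^+}$, and every $T>t$ also satisfies $T>s$, so the witnesses $\Lambda'\in\mathcal{F}^0_s$ and $\Gamma\in\mathcal{F}^0_{s^+}$ exhibiting $\Lambda\in\mathcal{M}^{\mu}_s$ exhibit equally well $\Lambda\in\mathcal{M}^{\mu}_t$.

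I expect the one genuinely delicate point to be the general-$h$ case of (ii) on $\mathcal{F}^0_t$: the identification of the two distribution flows supplied by Theorem \ref{thm-independent} holds a priori only for $x$ outside an $\mathcal{E}$-exceptional set, namely on the Borel set $S$ of Lemma \ref{invariantSet}, which is simultaneously $\mathbf{M}^{h}$-invariant and $\mathbf{M}^{h'}$-invariant. One has to check that this is enough to run the telescoping comparison of kernels for the relevant starting points, and it is precisely the invariance of $S$ under both processes---so that once started in $S$ the canonical process never leaves it and only the coinciding kernels $Q^{(h)}_s(x,\cdot)=Q^{(h')}_s(x,\cdot)$, $x\in S$, are ever used---that makes the argument go through.
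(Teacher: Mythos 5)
Your route is genuinely different from the paper's. The paper proves (i) directly from the definitions (\ref{ali-Mmut}) and (\ref{ali-Fhtmu}): it takes the witnesses $\Lambda',\Gamma$ for $\Lambda\in\mathcal{M}^{\mu}_t$, intersects with $\{t<\zeta\}$, and invokes Lemma \ref{lem-Mmut}(iii) to conclude $P^{h}_{\mu}(\Gamma\cap\{t<\zeta\})=0$ for every $h\in\mathcal{H}$; it then declares (ii) a direct consequence of (i) and omits (iii). You go the other way: you establish (ii) on $\mathcal{F}^0_t$ first (via Theorem \ref{thm-independent} and Lemma \ref{invariantSet}), extend it to $\mathcal{M}^{\mu}_t$ through Lemma \ref{lem-Mmut}(i), and only then deduce (i); you also supply the easy verification of (iii) that the paper leaves out. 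That reorganization is a legitimate alternative, and your argument for (iii), using $\mathcal{F}^0_s\subset\mathcal{F}^0_t$, $\mathcal{F}^0_{s^+}\subset\mathcal{F}^0_{t^+}$, and the fact that any $T>t$ is also $>s$, is correct.

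The step that remains soft is exactly the one you flag at the end: the $\mathcal{F}^0_t$ (and $\mathcal{F}^0_T$, $T>t$) identity for an arbitrary $h'\in\mathcal{H}$ when $\mu$ is an arbitrary probability on $E_{\Delta}$. Theorem \ref{thm-independent} and Lemma \ref{invariantSet} only give $Q^{(h)}_s(x,\cdot)=Q^{(h')}_s(x,\cdot)$, and hence the telescoping comparison, for $x$ in the Borel set $S$ whose complement is $\mathcal{E}$-exceptional. The invariance of $S$ that you invoke ensures the process started in $S$ never leaves it, but it says nothing about $P^{h}_x$ versus $P^{h'}_x$ for $x\in E\setminus S$, and $\mu$ may well charge $E\setminus S$. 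So the assertion that $P^{h'}_{\mu}$ and $P^{h}_{\mu}$ share null sets on $\mathcal{F}^0_T\cap\{T<\zeta\}$, which is what you (and I) need to pass to general $h'$, is not actually established for such $\mu$. In fairness, the paper's two-line proof has the same lacuna: Lemma \ref{lem-Mmut}(iii) is stated and proved only for the fixed $h$ used to build $\overline{Q}_{\mu,t}$, and the leap to "for any $h\in\mathcal{H}$" presupposes precisely the uniform-in-$x$ $h$-independence of $\overline{Q}_{\mu,T}$ that neither argument proves. Your proposal is therefore a fair reconstruction of the intended argument, arranged as (ii) $\Rightarrow$ (i) rather than (i) $\Rightarrow$ (ii), and it has the virtue of making the latent assumption visible rather than suppressing it behind a citation.
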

\begin{proof}
If $\Lambda\in\mathcal{M}^{\mu}_t$,  then there exist $\Lambda'\in\mathcal{F}^0_t$ and  $\Gamma\in\mathcal{F}^0_{t^+}$ such that   $\Lambda\triangle\Lambda'\subset\Gamma$ and $P^h_{\mu}(\Gamma;t<\zeta)=0$.
Then we have $[\Lambda\cap\{t<\zeta\}]\triangle[\Lambda'\cap\{t<\zeta\}]
\subset\Gamma\cap\{t<\zeta\}$  and $\Lambda'\cap\{t<\zeta\}\in\mathcal{F}^0_{t^+}$. Applying Lemma \ref{lem-Mmut} (iii) we have $P^h_{\mu}(\Gamma \cap \{t<\zeta\})=0$ for any $h\in \mathcal{H}$, consequently
$\Lambda\cap\{t<\zeta\}\in \mathcal{F}^{h,\mu}_t$ for any  $h\in \mathcal{H},$ verifying Assertion (i). Assertions (ii) is a direct consequence of (i). Assertion (iii)
can be verified directly and we omit its proof.
\end{proof}

\begin{lemma}\label{lemma-before Mt continuous}
Denote by $\mathcal{M}^{\mu}_{t^+}:=\bigcap\limits_{s>t}\mathcal{M}^{\mu}_s,$
then we have $\mathcal{M}^{\mu}_{t^+}=\mathcal{G}^{\mu,2}_t,$ where $\mathcal{G}^{\mu,2}_t$ is defined by (\ref{ali-Gmu2}) below.
\begin{align}\label{ali-Gmu2}
\mathcal{G}^{\mu,2}_t:=\{\Lambda\subset\Omega~|~&\exists ~ \Lambda'\in\mathcal{F}^0_{t^+}, \,\,\Gamma\in\mathcal{F}^0_{t^+}, \,\,\mbox{s.t.}\\
 &\Lambda\triangle\Lambda'\subset\Gamma~ \mbox{and} ~ P^h_{\mu}(\Gamma;t<\zeta)=0\}. \nonumber
\end{align}
\end{lemma}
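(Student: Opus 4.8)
The plan is to prove the two inclusions $\mathcal{M}^{\mu}_{t^+}\subset\mathcal{G}^{\mu,2}_t$ and $\mathcal{G}^{\mu,2}_t\subset\mathcal{M}^{\mu}_{t^+}$ separately, exploiting that all the $\sigma$-fields $\mathcal{F}^0_{s^+}$ with $s>t$ collapse to $\mathcal{F}^0_{t^+}$ in the limit, together with the monotonicity $\mathcal{M}^{\mu}_s\subset\mathcal{M}^{\mu}_u$ for $s<u$ (Lemma \ref{thm-relation M and F}(iii)) and the characterization $\mathcal{M}^{\mu}_s=\mathcal{G}^{\mu,1}_s$ (Lemma \ref{lem-Mmut}(i)). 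For the easy inclusion $\mathcal{G}^{\mu,2}_t\subset\mathcal{M}^{\mu}_{t^+}$, I would take $\Lambda\in\mathcal{G}^{\mu,2}_t$ with witnesses $\Lambda'\in\mathcal{F}^0_{t^+}$ and $\Gamma\in\mathcal{F}^0_{t^+}$ satisfying $\Lambda\triangle\Lambda'\subset\Gamma$ and $P^h_{\mu}(\Gamma;t<\zeta)=0$; then for every $s>t$ we have $\Lambda',\Gamma\in\mathcal{F}^0_{t^+}\subset\mathcal{F}^0_s$ and $\Gamma\in\mathcal{F}^0_{s^+}$, while $P^h_{\mu}(\Gamma;s<\zeta)\le P^h_{\mu}(\Gamma;t<\zeta)=0$ (using $\{s<\zeta\}\subset\{t<\zeta\}$), and equivalently $\overline{Q}_{\mu,T}(\Gamma;T<\zeta)=0$ for all $T>s$ by Lemma \ref{lem-Mmut}(i); hence $\Lambda\in\mathcal{M}^{\mu}_s$ for every $s>t$, so $\Lambda\in\mathcal{M}^{\mu}_{t^+}$.

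For the reverse inclusion $\mathcal{M}^{\mu}_{t^+}\subset\mathcal{G}^{\mu,2}_t$, I would fix $\Lambda\in\mathcal{M}^{\mu}_{t^+}$, so that for every $s>t$ there are $\Lambda'_s\in\mathcal{F}^0_s$ and $\Gamma_s\in\mathcal{F}^0_{s^+}$ with $\Lambda\triangle\Lambda'_s\subset\Gamma_s$ and $P^h_{\mu}(\Gamma_s;s<\zeta)=0$. I would apply this along a sequence $s_n\downarrow t$, obtaining $\Lambda'_n:=\Lambda'_{s_n}\in\mathcal{F}^0_{s_n}$ and $\Gamma_n:=\Gamma_{s_n}\in\mathcal{F}^0_{s_n^+}\subset\mathcal{F}^0_{s_{n-1}}$. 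The standard device is to set $\Gamma:=\limsup_n\Gamma_n=\bigcap_{m}\bigcup_{n\ge m}\Gamma_n$ and $\Lambda':=\liminf_n\Lambda'_n$; since for each $m$ the set $\bigcup_{n\ge m}\Gamma_n$ lies in $\mathcal{F}^0_{s_{m-1}}$ and these are decreasing in $m$ with $s_{m-1}\downarrow t$, we get $\Gamma\in\bigcap_m\mathcal{F}^0_{s_m}=\mathcal{F}^0_{t^+}$, and similarly $\Lambda'\in\mathcal{F}^0_{t^+}$. One checks $\Lambda\triangle\Lambda'\subset\Gamma$ pointwise: if $\omega\in\Lambda\setminus\Lambda'$ then $\omega\notin\Lambda'_n$ for infinitely many $n$, so $\omega\in\Lambda\triangle\Lambda'_n\subset\Gamma_n$ infinitely often, i.e. $\omega\in\Gamma$ (and symmetrically for $\Lambda'\setminus\Lambda$). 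Finally $P^h_{\mu}(\Gamma;t<\zeta)\le\sum_{n\ge m}P^h_{\mu}(\Gamma_n;t<\zeta)$ for each $m$ — but here one must be careful that $P^h_{\mu}(\Gamma_n;s_n<\zeta)=0$ is the hypothesis, not $P^h_{\mu}(\Gamma_n;t<\zeta)=0$, so one should instead intersect with $\{s_m<\zeta\}$ and argue $P^h_{\mu}(\Gamma\cap\{s_m<\zeta\};t<\zeta)\le\sum_{n\ge m}P^h_{\mu}(\Gamma_n;s_n<\zeta)=0$, then let $m\to\infty$ and use right continuity of paths so that $\{s_m<\zeta\}\uparrow\{t<\zeta\}$, giving $P^h_{\mu}(\Gamma;t<\zeta)=0$. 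Hence $\Lambda\in\mathcal{G}^{\mu,2}_t$.

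The main obstacle I expect is the bookkeeping in the last step: namely matching the ``time of evaluation'' $s_n$ appearing in the null condition $P^h_{\mu}(\Gamma_n;s_n<\zeta)=0$ against the fixed time $t$ that must appear in the conclusion $P^h_{\mu}(\Gamma;t<\zeta)=0$, since $\{s_n<\zeta\}$ is strictly smaller than $\{t<\zeta\}$. This is handled by the observation (Lemma \ref{lem-Mmut}(i), transcribed to the $h$-picture via Lemma \ref{thm-relation M and F}(ii)) that for a set in $\mathcal{F}^0_{t^+}$, being $P^h_{\mu}(\cdot;s<\zeta)$-null for one $s>t$ is equivalent to being null for all such $s$ and also for the whole of $\{t<\zeta\}$ once we take the increasing union $\{s_m<\zeta\}\uparrow\{t<\zeta\}$ as $s_m\downarrow t$; countable subadditivity of $P^h_{\mu}$ then closes the argument. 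Apart from this, everything is routine manipulation of symmetric differences and monotone sequences of $\sigma$-fields.
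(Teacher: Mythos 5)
Your proposal is correct and follows essentially the same route as the paper: the easy inclusion is handled by comparing (\ref{ali-Gmu2}) with (\ref{ali-Gmu1}) for $s>t$, and the hard inclusion is handled by taking $s_n\downarrow t$, applying (\ref{ali-Gmu1}) along the sequence, and passing to $\limsup\Gamma_n$ (the paper uses $\limsup\Lambda_n$ where you use $\liminf\Lambda'_n$, but both give $\Lambda\triangle\Lambda'\subset\Gamma$ and membership in $\mathcal{F}^0_{t^+}$), then closing via $P^h_\mu(\Gamma;s_m<\zeta)=0$ and monotone convergence as $s_m\downarrow t$. The only small remark is that $\{s_m<\zeta\}\uparrow\{t<\zeta\}$ is a purely set-theoretic fact about $\inf_m s_m=t$ and does not require right continuity of paths, so that parenthetical is unnecessary; otherwise the argument, including the careful matching of the evaluation time in the null conditions, is exactly the paper's.
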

\begin{proof}
Comparing (\ref{ali-Gmu2}) with (\ref{ali-Gmu1}), it is clear that $\mathcal{G}^{\mu,2}_t \subset \mathcal{G}^{\mu,1}_s = \mathcal{M}^{\mu}_s$
for $s>t,$ therefore $\mathcal{G}^{\mu,2}_t \subset \mathcal{M}^{\mu}_{t^+}.$
Conversely, for any $\Lambda\in\mathcal{M}^{\mu}_{t^+}$, let $s_n\downarrow t$, then $\Lambda\in\mathcal{M}^{\mu}_{s_n}$ for all $s_n$. By (\ref{ali-Gmu1}) there exist $\Lambda_n\in\mathcal{F}^0_{s_n}$ and $\Gamma_n\in\mathcal{F}^0_{s^+_n}$ such that $\Lambda\triangle\Lambda_n\subset\Gamma_n$ and $P^{h}_{\mu}(\Gamma_n;s_n<\zeta)=0$.
Let
$$\Lambda'=\bigcap\limits_{m=1}^{\infty}\bigcup\limits_{n=m}^{\infty}\Lambda_n,
~~\Gamma'=\bigcap\limits_{m=1}^{\infty}\bigcup\limits_{n=m}^{\infty}\Gamma_n,$$
then $\Lambda', \Gamma'\in\mathcal{F}^0_{t^+}$ and $\Lambda\triangle\Lambda'\subset\Gamma'$. Moreover, since $P^{h}_{\mu}(\Gamma';s_m<\zeta)\leq P^{h}_{\mu}(\bigcup^{\infty}\limits_{n=m}\Gamma_n;s_m<\zeta)=0$,
hence we have
$P^{h}_{\mu}(\Gamma';t<\zeta)=\lim\limits_{m\to+\infty}
P^{h}_{\mu}(\Gamma';s_m<\zeta)=0.$  Therefore, $\Lambda\in\mathcal{G}^{\mu,2}_t$, proving $\mathcal{M}^{\mu}_{t^+} \subset \mathcal{G}^{\mu,2}_t$.
\end{proof}
We define
\begin{align}\label{ali-Mnut all}
\mathcal{M}_t:=\bigcap\limits_{\mu\in\mathcal{P}(E_{\Delta})}
\mathcal{M}^{\mu}_t.
\end{align}
Below is a main result of this section.
\begin{theorem}\label{thm-Mu right continuous}
$(\mathcal{M}^{\mu}_t)_{t\geq0}$ is a right continuous filtration for any $\mu\in\mathcal{P}(E_{\Delta})$, and hence
$\{\mathcal{M}_t\}:= (\mathcal{M}_t)_{t\geq0}$ is a right continuous filtration.
\end{theorem}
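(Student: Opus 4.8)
The plan is to establish the right continuity of $(\mathcal{M}^{\mu}_t)_{t\geq0}$ for each fixed $\mu\in\mathcal{P}(E_{\Delta})$; the right continuity of $\{\mathcal{M}_t\}$ then follows at once, since by (\ref{ali-Mnut all}) the two intersections commute, $\bigcap_{s>t}\mathcal{M}_s=\bigcap_{s>t}\bigcap_{\mu}\mathcal{M}^{\mu}_s=\bigcap_{\mu}\bigcap_{s>t}\mathcal{M}^{\mu}_s=\bigcap_{\mu}\mathcal{M}^{\mu}_{t^+}$. By Lemma~\ref{thm-relation M and F}(iii) the family $(\mathcal{M}^{\mu}_t)_t$ is increasing, hence $\mathcal{M}^{\mu}_t\subset\mathcal{M}^{\mu}_{t^+}$ automatically and only the reverse inclusion $\mathcal{M}^{\mu}_{t^+}\subset\mathcal{M}^{\mu}_t$ requires work. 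Using Lemma~\ref{lemma-before Mt continuous} to identify $\mathcal{M}^{\mu}_{t^+}$ with $\mathcal{G}^{\mu,2}_t$ and Lemma~\ref{lem-Mmut}(i) to identify $\mathcal{M}^{\mu}_t$ with $\mathcal{G}^{\mu,1}_t$, the goal becomes the inclusion $\mathcal{G}^{\mu,2}_t\subset\mathcal{G}^{\mu,1}_t$. Comparing the defining conditions (\ref{ali-Gmu2}) and (\ref{ali-Gmu1}), these two classes differ only in that the approximating set $\Lambda'$ is allowed to lie in $\mathcal{F}^0_{t^+}$ for $\mathcal{G}^{\mu,2}_t$, whereas $\mathcal{G}^{\mu,1}_t$ requires $\Lambda'\in\mathcal{F}^0_t$; so the whole matter reduces to a single replacement statement.

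The statement I would prove is: \emph{for every $\Lambda'\in\mathcal{F}^0_{t^+}$ there exists $\Lambda''\in\mathcal{F}^0_t$ with $P^h_{\mu}(\Lambda'\triangle\Lambda'')=0$.} Granting this, let $\Lambda\in\mathcal{G}^{\mu,2}_t$, say $\Lambda\triangle\Lambda'\subset\Gamma$ with $\Lambda',\Gamma\in\mathcal{F}^0_{t^+}$ and $P^h_{\mu}(\Gamma;t<\zeta)=0$; pick $\Lambda''\in\mathcal{F}^0_t$ as in the statement and put $\Gamma':=\Gamma\cup(\Lambda'\triangle\Lambda'')$. Because $\Lambda''\in\mathcal{F}^0_t\subset\mathcal{F}^0_{t^+}$ we get $\Gamma'\in\mathcal{F}^0_{t^+}$, and moreover $\Lambda\triangle\Lambda''\subset(\Lambda\triangle\Lambda')\cup(\Lambda'\triangle\Lambda'')\subset\Gamma'$ together with $P^h_{\mu}(\Gamma';t<\zeta)\leq P^h_{\mu}(\Gamma;t<\zeta)+P^h_{\mu}(\Lambda'\triangle\Lambda'')=0$, so $\Lambda\in\mathcal{G}^{\mu,1}_t$. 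The point that makes this work, and the reason it is not a routine completion, is that the set $\Lambda'\triangle\Lambda''$ produced by the replacement statement lies in $\mathcal{F}^0_{t^+}$ (being a symmetric difference of two members of $\mathcal{F}^0_{t^+}$), so it is legitimate to use it as part of the enveloping $\mathcal{F}^0_{t^+}$-set demanded by $\mathcal{G}^{\mu,1}_t$ — even though there is no single measure on $\mathcal{F}^0_{\infty}$ with respect to which one could complete.

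It remains to prove the replacement statement, and this is where the Markov structure of the $h$-associated process enters; it is essentially the Blumenthal $0$--$1$ law applied at time $t$. Recall that $\mathbf{M}^h$ is an $m$-special standard process, hence a right process obeying Blumenthal's $0$--$1$ law, and (as already used in the proof of Theorem~\ref{thm-Pseudo-Markov}) it is Markovian with respect to $(\mathcal{F}^0_{t^+})_{t\geq0}$; consequently its natural $P^h_{\mu}$-augmented filtration $(\mathcal{F}^{h,\mu}_t)_{t\geq0}$ defined in (\ref{ali-Fhtmu}) is right continuous (see e.g. \cite{MR92} or \cite{FOT11}). Therefore, for $\Lambda'\in\mathcal{F}^0_{t^+}\subset\mathcal{F}^{h,\mu}_{t^+}=\mathcal{F}^{h,\mu}_t$ there are $\Lambda''\in\mathcal{F}^0_t$ and $N\in\mathcal{F}^h_{\infty}$ with $P^h_{\mu}(N)=0$ and $\Lambda'\triangle\Lambda''\subset N$, so $P^h_{\mu}(\Lambda'\triangle\Lambda'')=0$, as required. (Alternatively one can argue directly: by right continuity of the paths $\mathcal{F}^0_{t^+}\subset\mathcal{F}^0_t\vee\theta_t^{-1}\mathcal{F}^0_{0^+}$, and the Markov property of $\mathbf{M}^h$ at time $t$ combined with Blumenthal's $0$--$1$ law for $\mathbf{M}^h$ restarted at $X_t$ forces $E^h_{\mu}[I_{\Lambda'}\,|\,\mathcal{F}^0_t]$ to be $\{0,1\}$-valued $P^h_{\mu}$-a.s.; one then takes for $\Lambda''$ the set on which an $\mathcal{F}^0_t$-measurable version of this conditional expectation equals $1$.) I expect the Markov input above to be entirely classical, so the genuine difficulty lies in the bookkeeping of the middle paragraph: one must confine every enveloping null set to $\mathcal{F}^0_{t^+}$ and work only with the $\sigma$-finite measures $\overline{Q}_{\mu,t}$, equivalently the finite measures $P^h_{\mu}(\cdot\,;t<\zeta)$, and this is exactly what the preparatory Lemmas~\ref{lem-Mmut} and~\ref{lemma-before Mt continuous} are set up to enable.
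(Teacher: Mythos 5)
Your proof is correct and follows essentially the same route as the paper's: both reduce the issue to showing $\mathcal{G}^{\mu,2}_t\subset\mathcal{G}^{\mu,1}_t$ via Lemmas~\ref{lem-Mmut} and~\ref{lemma-before Mt continuous}, and both hinge on the Markov property of $\mathbf{M}^h$ with respect to $(\mathcal{F}^0_{t^+})_{t\geq0}$ to produce, for each $\Lambda'\in\mathcal{F}^0_{t^+}$, an $\mathcal{F}^0_t$-set differing from it by a $P^h_\mu$-null set lying in $\mathcal{F}^0_{t^+}$. The paper states this step directly as $I_{\Lambda'}=E^h_\mu(I_{\Lambda'}\mid\mathcal{F}^0_t)$ $P^h_\mu$-a.s.\ and reads off $\Lambda''$ from that conditional expectation, which is precisely your parenthetical alternative, so the two arguments coincide in substance.
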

\begin{proof}
For the first assertion, by virtue of Lemma \ref{thm-relation M and F} (iii), we need only to check that $\mathcal{M}^{\mu}_{t^+}\subset \mathcal{M}^{\mu}_t$.
Note that 
$(\mathcal{F}^{h}_t)_{t\geq0}$ is right continuous, therefore  $(X_t)_{t\geq0}$  equipped with  $(P^{h}_x)_{x\in E_{\Delta}}$ is a Markov process with respect to the filtration $(\mathcal{F}^0_{t^+})_{t\geq0}$. Hence, for any $Y\in p\mathcal{F}^0_{\infty}$, we have
$$E^{h}_{\mu}(Y|\mathcal{F}^0_{t^+})=E^{h}_{\mu}(Y|\mathcal{F}^0_t),
\,\,P^{h}_{\mu}-a.s..$$
Thus for any $\Lambda\in\mathcal{F}^0_{t^+}$, $I_{\Lambda}=E^{h}_{\mu}(I_{\Lambda}|\mathcal{F}^0_t), P^{h}_{\mu}$-$a.s.$,
which means that there exists $\Gamma\in\mathcal{F}^0_{t^+}$ such that $P^{h}_{\mu}(\Gamma)=0$ and $\{I_{\Lambda}\not=E^{h}_{\mu}(I_{\Lambda}|\mathcal{F}^0_t)\}\subset\Gamma$.
In particular we have $P^{h}_{\mu}(\Gamma;t<\zeta)=0$. Set $\Lambda' =E^{h}_{\mu}(I_{\Lambda}|\mathcal{F}^0_t)\in\mathcal{F}^0_t,$  by (\ref{ali-Gmu1}) we  get $\Lambda\in\mathcal{G}^{\mu,1}_t=\mathcal{M}^{\mu}_t$ which means $\mathcal{F}^0_{t^+}\subset\mathcal{M}^{\mu}_t$. Therefore, $\mathcal{G}^{\mu,2}_t\subset\mathcal{M}^{\mu}_t$ because by (\ref{ali-Gmu2})
$\mathcal{G}^{\mu,2}_t$ is a completion of $\mathcal{F}^0_{t^+}$ with respect to the measure $P^h_{\mu}(\cdot; t<\zeta)$. Consequently by Lemma \ref{lemma-before Mt continuous} $\mathcal{M}^{\mu}_{t^+}\subset \mathcal{M}^{\mu}_t$, proving the first assertion. The last assertion follows from the derivation below.
\begin{align*}
\mathcal{M}_t=&\bigcap\limits_{\mu\in\mathcal{P}(E_{\Delta})}
\mathcal{M}^{\mu}_t
=\bigcap\limits_{\mu\in\mathcal{P}(E_{\Delta})}\mathcal{M}^{\mu}_{t^+}
=\bigcap\limits_{\mu\in\mathcal{P}(E_{\Delta})}\bigcap\limits_{s>t}
\mathcal{M}^{\mu}_s\\
=&\bigcap\limits_{s>t}\bigcap\limits_{\mu\in\mathcal{P}(E_{\Delta})}
\mathcal{M}^{\mu}_s=\bigcap\limits_{s>t}\mathcal{M}_s=\mathcal{M}_{t^+}.
\end{align*}
\end{proof}

We shall use the notation $\mathcal{B}(E_{\Delta})^*$ to denote $\bigcap\limits_{\mu\in\mathcal{P}(E_{\Delta})}\mathcal{B}(E_{\Delta})^{\mu}$.
\begin{proposition}\label{pro-X_t measureble}
$X_t\in\mathcal{M}_t/\mathcal{B}(E_{\Delta})^*$.
\end{proposition}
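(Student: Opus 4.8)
The plan is to reduce the statement to a measure–completion fact for each fixed initial law, and then to propagate null sets by the Markov property of Theorem~\ref{thm-Pseudo-Markov}. Since $X_t$ is the coordinate map it is $\mathcal{F}^0_t/\mathcal{B}(E_\Delta)$-measurable, and taking $\Lambda'=\Lambda$, $\Gamma=\emptyset$ in (\ref{ali-Mmut}) shows $\mathcal{F}^0_t\subset\mathcal{M}^\mu_t$ for every $\mu\in\mathcal{P}(E_\Delta)$, hence $\mathcal{F}^0_t\subset\mathcal{M}_t$ by (\ref{ali-Mnut all}). Thus $X_t$ is already $\mathcal{M}_t/\mathcal{B}(E_\Delta)$-measurable, and since $\mathcal{B}(E_\Delta)\subset\mathcal{B}(E_\Delta)^*$ it remains to check that $\{X_t\in B\}\in\mathcal{M}_t$ for every $B\in\mathcal{B}(E_\Delta)^*$; by (\ref{ali-Mnut all}) it is enough to fix $\mu\in\mathcal{P}(E_\Delta)$ and show $\{X_t\in B\}\in\mathcal{M}^\mu_t$.

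Fix such $\mu$ and $B$. By (\ref{eq: Q_tf}) and the definition of $\overline{Q}_{\mu,t}$, the image of $\overline{Q}_{\mu,t}(\cdot\,;t<\zeta)$ under $X_t$ is the measure $\nu_\mu$ on $\mathcal{B}(E_\Delta)$ given by $\nu_\mu(C)=\int_{E_\Delta}Q_t(x,C)\,\mu(dx)$, and by (\ref{Qt}) one has $Q_t(x,C)=h(x)e^{\alpha t}\rho_x(C)$ with $\rho_x(C):=E^h_x[h(X_t)^{-1}I_C(X_t);t<\zeta]$ (convention $\infty\cdot0=0$). The key preliminary step is that $\nu_\mu$ — although not necessarily $\sigma$-finite — has the same Borel null sets as some $\sigma$-finite measure, so that $B$, being universally measurable, lies between Borel sets modulo a $\nu_\mu$-null Borel set. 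Indeed, since every $h\in\mathcal{H}$ is strictly positive on $E$, each $\rho_x$ is $\sigma$-finite ($\rho_x(\{h\ge\varepsilon\})\le\varepsilon^{-1}$ and $\{h\ge\varepsilon\}\uparrow E$); decomposing $\mu$ along the level sets $\{n-1\le h<n\}$, $n\ge1$, and $\{h=\infty\}$, the part of $\nu_\mu$ coming from $\{h<\infty\}$ is $\sigma$-finite, while the part coming from $\{h=\infty\}$ agrees with a $\sigma$-finite measure up to the overall factor $\infty$ and hence has the same null sets. Consequently there are Borel sets $B_1\subset B$ and $N\supset B\setminus B_1$ with $\nu_\mu(N)=0$, i.e.\ $\overline{Q}_{\mu,t}(X_t\in N;t<\zeta)=0$.

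Put $\Gamma:=\{X_t\in N\}\in\mathcal{F}^0_t\subset\mathcal{F}^0_{t^+}$ and $\Lambda':=\{X_t\in B_1\}\in\mathcal{F}^0_t$. Applying (\ref{eq:general relation 1}) of Corollary~\ref{thm-genaral-relation} to the $\mathcal{F}^0_t$-measurable functional $I_N(X_t)$ (equivalently (\ref{eq: Pseudo-Markov1}) with $s=t$, $u=T-t$, $Y\equiv1$) and integrating in $\mu$, one gets, for every $T>t$,
\[
\overline{Q}_{\mu,T}(\Gamma;T<\zeta)=\overline{Q}_{\mu,t}\bigl[I_N(X_t)\,Q_{T-t}(X_t,E);t<\zeta\bigr]=0,
\]
since $\{X_t\in N\}\cap\{t<\zeta\}$ is $\overline{Q}_{\mu,t}$-null (and $0\cdot\infty=0$). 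As $\{X_t\in B\}\triangle\Lambda'=\{X_t\in B\setminus B_1\}\subset\Gamma$, the pair $(\Lambda',\Gamma)$ witnesses $\{X_t\in B\}\in\mathcal{M}^\mu_t$ through (\ref{ali-Mmut}). Since $\mu$ was arbitrary, $\{X_t\in B\}\in\mathcal{M}_t$, as required.

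The one genuinely delicate point is the preliminary step of the second paragraph: $\overline{Q}_{\mu,t}$ is only $\sigma$-finite on $\mathcal{F}^0_t$ (not finite), and when $\mu$ charges the $\mathcal{E}$-exceptional set $\{h=\infty\}$ the image measure $\nu_\mu$ can fail to be $\sigma$-finite, so one cannot invoke universal measurability of $B$ for $\nu_\mu$ directly; the level-set decomposition of $h$, together with the strict positivity of $h\in\mathcal{H}$ (and, where convenient, the invariant set of Lemma~\ref{invariantSet}), is exactly what replaces $\nu_\mu$ by a $\sigma$-finite measure with the same null sets, and that is all the argument needs. The reduction in the first paragraph and the propagation of the null set in the third are then routine consequences of the definitions and of the Markov property established in Theorem~\ref{thm-Pseudo-Markov}.
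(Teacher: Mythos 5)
Your argument is correct, but it takes a noticeably harder road than the paper's. The paper's proof exploits Lemma~\ref{lem-Mmut}(i), which identifies $\mathcal{M}^\mu_t$ with $\mathcal{G}^{\mu,1}_t$; this lets one replace the $\sigma$-finite flow $\overline{Q}_{\mu,t}$ by the \emph{finite} measure $P^h_\mu(\cdot\,;t<\zeta)$ when testing null sets. The pushforward $\nu(C):=P^h_\mu(X_t\in C;t<\zeta)$ is then automatically a finite Borel measure, universal measurability of $B$ applies directly, and the witnessing pair $(\Lambda',\Gamma)=(\{X_t\in A'\},\{X_t\in N\})$ drops out in two lines with no need to propagate anything forward in $T$. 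You instead work straight from the raw definition~(\ref{ali-Mmut}): you push $\overline{Q}_{\mu,t}$ forward under $X_t$, notice the resulting $\nu_\mu$ can fail to be $\sigma$-finite, repair that with a level-set decomposition of $h$ (separating $\{n-1\le h<n\}$ and the exceptional set $\{h=\infty\}$), and then use Corollary~\ref{thm-genaral-relation} to verify $\overline{Q}_{\mu,T}(\Gamma;T<\zeta)=0$ for every $T>t$. Every step of this is sound, and it has the modest virtue of not invoking Lemma~\ref{lem-Mmut} at all, but the price is the $\sigma$-finiteness detour and the extra appeal to the Markov-type identity; since Lemma~\ref{lem-Mmut}(i)--(iv) is already available in the text, the paper's choice to route the null-set argument through $P^h_\mu(\cdot\,;t<\zeta)$ is exactly the intended shortcut, and it makes the proposition essentially immediate.
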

\begin{proof}
For any $\mu\in\mathcal{P}(E_{\Delta})$, let $\nu(C):=P^{h}_{\mu}(X_t\in C;t<\zeta)$ for  $C\in\mathcal{B}(E_{\Delta})$,
then $\nu$ is a finite measure on $\mathcal{B}(E_{\Delta})$.
For any $A\in\mathcal{B}(E_{\Delta})^*$, there exist $A', B\in\mathcal{B}(E_{\Delta})$ such that $A\triangle A'\subset B$ and $\nu(B)=0$. Thus
$$\{X_t\in A\}\triangle\{X_t\in A'\}=X^{-1}_t(A\triangle A')\subset X^{-1}_t(B)$$
and
$$P^{h}_{\mu}(X^{-1}_t(B);t<\zeta)=P^{h}_{\mu}(X_t\in B;t<\zeta)=\nu(B)=0.$$
Since $\{X_t\in A'\}\in\mathcal{F}^0_{t}$ and $X^{-1}_t(B)\in\mathcal{F}^0_t\subset\mathcal{F}^0_{t^+}$,
we get $\{X_t\in A\}\in\mathcal{M}^{\mu}_t$. Hence, $X_t\in\mathcal{M}_t/\mathcal{B}(E_{\Delta})^*$.
\end{proof}

\section{Stopping times and strong Markov property}\label{Sect:strongMarkov}
In this section we follow the conventions and notations of the previous section. Let $(\mathcal{M}^{\mu}_{t})_{t\geq 0}$ and $(\mathcal{M}_{t})_{t\geq 0}$ be as in Theorem \ref{thm-Mu right continuous}. We set
\begin{center}
$\mathcal{M}^{\mu}_{\infty}:=\sigma(\bigcup\limits_{t\geq0}\mathcal{M}^{\mu}_t)$
~and~ $\mathcal{M}_{\infty}:=\sigma(\bigcup\limits_{t\geq0}\mathcal{M}_t)$.
\end{center}

For $B\subset E_{\Delta}$, we define the entrance time $D_B$ and the hitting time $\sigma_B$ by:
$$D_B(\omega):=\inf\{t\geq0~|~X_t(\omega)\in B\},~~ \sigma_B(\omega):=\inf\{t>0~|~X_t(\omega)\in B\}.$$

\begin{theorem}\label{thm-entrance time stopping time}
Assume that $B\in\mathcal{B}(E_{\Delta})$, then the entrance time $D_B$ and the hitting time $\sigma_B$ are $\{\mathcal{M}_t\}$-stopping times.
\end{theorem}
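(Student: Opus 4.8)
The plan is to reduce everything, using the right continuity of $\{\mathcal{M}_t\}$ (Theorem~\ref{thm-Mu right continuous}) and the identity $\mathcal{M}_t=\bigcap_{\mu\in\mathcal{P}(E_\Delta)}\mathcal{M}^{\mu}_t$, to the single statement that for every $\mu\in\mathcal{P}(E_\Delta)$ and every $t\ge 0$ one has $\{D_B<t\}\in\mathcal{M}^{\mu}_t$ and $\{\sigma_B<t\}\in\mathcal{M}^{\mu}_t$. Granting this, $\{D_B<t\}\in\bigcap_\mu\mathcal{M}^{\mu}_t=\mathcal{M}_t$ for all $t$, and then $\{D_B\le t\}=\bigcap_{n}\{D_B<t+\frac1n\}\in\bigcap_n\mathcal{M}_{t+\frac1n}=\mathcal{M}_{t^+}=\mathcal{M}_t$; likewise for $\sigma_B$. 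Hence $D_B$ and $\sigma_B$ are $\{\mathcal{M}_t\}$-stopping times.

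I would then fix $\mu$ and $t$; the first, measure-free, step is that $\{D_B<t\}$ is an analytic set over $\mathcal{F}^0_t$. Indeed, the canonical cadlag process $(X_s)_{s\ge 0}$ is right continuous and adapted to $(\mathcal{F}^0_s)_{s\ge 0}$, hence progressively measurable, so the restriction of $(s,\omega)\mapsto X_s(\omega)$ to $[0,t)\times\Omega$ is $\mathcal{B}([0,t))\otimes\mathcal{F}^0_t$-measurable; therefore $R:=\{(s,\omega)\in[0,t)\times\Omega:X_s(\omega)\in B\}\in\mathcal{B}([0,t))\otimes\mathcal{F}^0_t$. Since $[0,t)$ is a Lusin space, the projection onto $\Omega$ of a set in this product $\sigma$-field is $\mathcal{F}^0_t$-analytic; and $\{D_B<t\}$ is exactly $\mathrm{proj}_\Omega(R)$. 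Replacing $[0,t)$ by $(0,t)$ gives that $\{\sigma_B<t\}$ is $\mathcal{F}^0_t$-analytic as well.

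Next I would invoke the probability measure $P^h_{\mu}$ coming from the $h$-associated process $\mathbf{M}^h$ (which exists by Proposition~\ref{h-process}), restricted to $\mathcal{F}^0_t$, and apply Choquet's capacitability theorem: every $\mathcal{F}^0_t$-analytic set lies in the $P^h_{\mu}$-completion of $\mathcal{F}^0_t$. Hence there exist $\Lambda'\in\mathcal{F}^0_t$ and $N\in\mathcal{F}^0_t$ with $\{D_B<t\}\triangle\Lambda'\subset N$ and $P^h_{\mu}(N)=0$. Taking $\Gamma:=N\in\mathcal{F}^0_t\subset\mathcal{F}^0_{t^+}$ gives $P^h_{\mu}(\Gamma;t<\zeta)\le P^h_{\mu}(N)=0$, so by the description (\ref{ali-Gmu1}) of $\mathcal{G}^{\mu,1}_t$ and Lemma~\ref{lem-Mmut}(i) we obtain $\{D_B<t\}\in\mathcal{G}^{\mu,1}_t=\mathcal{M}^{\mu}_t$. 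The same argument yields $\{\sigma_B<t\}\in\mathcal{M}^{\mu}_t$, which is exactly what the reduction in the first paragraph requires.

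The real obstacle here is structural rather than computational: since there is no single measure on $\mathcal{F}^0_\infty$, the classical one-shot debut/section argument for right processes is unavailable. The way around it is to split that argument into a genuinely measure-free part — the analyticity of $\{D_B<t\}$ and $\{\sigma_B<t\}$ over $\mathcal{F}^0_t$, resting only on progressive measurability of the coordinate process — and a measure-dependent completion part, performed for each initial law $\mu$ through the auxiliary finite measures $P^h_{\mu}$ and Lemma~\ref{lem-Mmut}, and then to intersect over all $\mu$. A small point to watch is that capacitability must be applied at the level of $\mathcal{F}^0_t$ (not $\mathcal{F}^0_{t^+}$), so that the exceptional set $N$ produced is automatically in $\mathcal{F}^0_{t^+}$ and $P^h_{\mu}(\cdot;t<\zeta)$-null, exactly as the description of $\mathcal{M}^{\mu}_t$ in (\ref{ali-Gmu1}) (equivalently (\ref{ali-Mmut})) demands.
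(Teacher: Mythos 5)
Your proposal is correct and follows essentially the same route as the paper's proof: show $\{D_B<t\}$ is a projection of a progressively measurable set over $\mathcal{F}^0_t$, deduce that it lies in $\mathcal{M}^{\mu}_t$ via the completeness from Lemma \ref{lem-Mmut}(iv) for every $\mu$, and then use right continuity of $\{\mathcal{M}_t\}$ to pass from $\{D_B<t\}$ to $\{D_B\le t\}$. The only difference is presentational: the paper cites the measurable projection theorem (\cite[Proposition A.1.1]{Chen_Fukushima}) to conclude universal measurability over $\mathcal{F}^0_t$, while you unpack the same fact explicitly as analyticity plus Choquet capacitability.
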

\begin{proof}
Let $t\geq0$, we define
$$\Phi_t:(s,\omega)\in[0,t]\times\Omega\longmapsto X_s(\omega)\in E_{\Delta}.$$
For $B\in\mathcal{B}(E_{\Delta})$, we set
\begin{center}
$A:=\{(s,\omega)\in[0,t)\times\Omega|~X_s(\omega)\in B\}=([0,t)\times\Omega)\bigcap \Phi_t^{-1}(B)$.
\end{center}
Since $(X_s)_{s\geq0}$ is $(\mathcal{F}^0_s)_{s\geq0}$-adapted and right continuous, hence we have  $A\in \mathcal{B}(\mathds{R})\times \mathcal{F}^0_t$.
Let $\Lambda=\{\omega\in\Omega~|~\exists ~s\in[0,t)~\mbox{s.t.} ~(s,\omega)\in A\}$, then $\Lambda$ is the projection of $A$ on $\Omega$. Hence  $\Lambda$ is measurable with respect to the universal completion of $\mathcal{F}^0_t$ (cf. e.g.  \cite[Proposition A.1.1]{Chen_Fukushima} ). For all  $\mu\in\mathcal{P}(E_{\Delta})$, $\mathcal{M}^{\mu}_t$ is complete with respect to the bounded measure $P^h_{\mu}(\cdot; t<\zeta)$ (cf. Lemma \ref{lem-Mmut} (iv)) , hence $\Lambda\in \mathcal{M}^{\mu}_t$. Consequently $\{D_B<t\}=\Lambda\in\mathcal{M}^{\mu}_t$ for all  $\mu\in\mathcal{P}(E_{\Delta})$, which means $\{D_B<t\}\in\mathcal{M}_t$. By the right continuity of $\{\mathcal{M}_t\}$ we see that $\{D_B\leq t\}\in\mathcal{M}_t,$ hence $D_B$ is an $\{\mathcal{M}_t\}$-stopping time. Similarly we can check that $\sigma_B$ is an $\{\mathcal{M}_t\}$-stopping time.
\end{proof}
\begin{remark}
Note that $\zeta=\inf\{t\geq 0~|~ X_t=\Delta\}=D_{\{\Delta\}}$. Hence the life time $\zeta$ is an $\{\mathcal{M}_t\}$-stopping time. In fact $\zeta$ is an $(\mathcal{F}^0_{t^+})_{t\geq0}$-stopping time. This can be seen by the fact that $\{\zeta <t\}=\bigcup_{s\in\mathds{Q}\cap(0,t)}\{X_s=\Delta\}\in \mathcal{F}^0_t$ (here $\mathds{Q}$ stands for rational numbers).
\end{remark}

For an $\{\mathcal{M}^{\mu}_t\}$-stopping time $\sigma$, we denote by
\begin{equation}\label{ali-Mmut strong}
\mathcal{M}^{\mu}_{\sigma}:=\{\Lambda\in\mathcal{M}^{\mu}_{\infty}|
~\Lambda\cap\{\sigma\leq t\}\in\mathcal{M}^{\mu}_t, ~\forall~ t\geq0\},
\end{equation}
and for an $\{\mathcal{M}_t\}$-stopping time $\sigma$, we denote by
\begin{equation}\label{ali-Mmut strong all}
\mathcal{M}_{\sigma}:=\{\Lambda\in\mathcal{M}_{\infty}|
~\Lambda\cap\{\sigma\leq t\}\in\mathcal{M}_t, ~\forall~ t\geq0\}.
\end{equation}
We make the convention that $X_\infty = \Delta.$ The remark below can be checked by standard arguments (cf. e.g. \cite[3.12]{HWY92} and \cite[Lemma A.1.13 (ii)]{Chen_Fukushima} ).
\begin{remark}\label{remark-X_t measureble stopping time 2}
(i) If $\sigma$ is an $\{\mathcal{M}^{\mu}_t\}$-stopping time, then $X_{\sigma}\in\mathcal{M^{\mu}_{\sigma}}/\mathcal{B}(E_{\Delta}).$

(ii) If $\sigma$ is an $\{\mathcal{M}_t\}$-stopping time, then $X_{\sigma}\in\mathcal{M}_{\sigma}/\mathcal{B}(E_{\Delta})$.
\end{remark}
The second assertion of the above remark can be strengthened as the proposition below.

\begin{proposition}\label{pro-unversal}
If $\sigma$ is an $\{\mathcal{M}_t\}$-stopping time, then $X_{\sigma}\in\mathcal{M}_{\sigma}/\mathcal{B}(E_{\Delta})^*$.
\end{proposition}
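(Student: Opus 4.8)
The plan is to reduce the claim to Proposition \ref{pro-X_t measureble} (which gives $X_t \in \mathcal{M}_t/\mathcal{B}(E_\Delta)^*$ for fixed deterministic times) combined with Remark \ref{remark-X_t measureble stopping time 2}(ii) (which gives $X_\sigma \in \mathcal{M}_\sigma/\mathcal{B}(E_\Delta)$), upgrading the target $\sigma$-field on the state space from $\mathcal{B}(E_\Delta)$ to $\mathcal{B}(E_\Delta)^*$. The key point is that $\mathcal{B}(E_\Delta)^*$ is the universal completion of $\mathcal{B}(E_\Delta)$, so it suffices to show that for every $A \in \mathcal{B}(E_\Delta)^*$ we have $\{X_\sigma \in A\} \in \mathcal{M}_\sigma$, and by definition of $\mathcal{M}_\sigma$ (\ref{ali-Mmut strong all}) this means $\{X_\sigma \in A\} \in \mathcal{M}_\infty$ and $\{X_\sigma \in A\} \cap \{\sigma \le t\} \in \mathcal{M}_t$ for all $t \ge 0$; since $\mathcal{M}_t = \bigcap_\mu \mathcal{M}^\mu_t$, it is enough to work with a fixed $\mu \in \mathcal{P}(E_\Delta)$ throughout.

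First I would fix $\mu \in \mathcal{P}(E_\Delta)$ and push the probability measure $P^h_\mu(\cdot\,; \cdot < \zeta)$ forward along the map $\omega \mapsto X_{\sigma(\omega)}(\omega)$ on the set $\{\sigma < \zeta\}$ (recalling the convention $X_\infty = \Delta$, and noting $X_\sigma = \Delta$ on $\{\sigma \ge \zeta\}$ as well), obtaining a finite Borel measure $\nu_\mu$ on $\mathcal{B}(E_\Delta)$. Given $A \in \mathcal{B}(E_\Delta)^*$, pick $A', B \in \mathcal{B}(E_\Delta)$ with $A \triangle A' \subset B$ and $\nu_\mu(B) = 0$. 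Then $\{X_\sigma \in A\} \triangle \{X_\sigma \in A'\} = X_\sigma^{-1}(A \triangle A') \subset X_\sigma^{-1}(B)$. By Remark \ref{remark-X_t measureble stopping time 2}(ii), $\{X_\sigma \in A'\} \in \mathcal{M}_\sigma$ and $X_\sigma^{-1}(B) \in \mathcal{M}_\sigma$. The crucial estimate is that $X_\sigma^{-1}(B)$ is $\overline{Q}_{\mu,t}$-null (equivalently $P^h_\mu(\cdot\,; t<\zeta)$-null) in the appropriate sense: on $\{\sigma \le t\} \cap \{t < \zeta\}$ one has $\sigma < \zeta$, so $X_\sigma^{-1}(B) \cap \{\sigma \le t\} \cap \{t<\zeta\} \subset \{X_\sigma \in B, \sigma < \zeta\}$, which has $P^h_\mu$-measure $\nu_\mu(B) = 0$. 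Completeness of $\mathcal{M}^\mu_t$ on $\mathcal{M}^\mu_t \cap \{t<\zeta\}$ with respect to $P^h_\mu(\cdot\,;t<\zeta)$ (Lemma \ref{lem-Mmut}(iv)) then lets me conclude that $\{X_\sigma \in A\} \cap \{\sigma \le t\} \cap \{t < \zeta\} \in \mathcal{M}^\mu_t$, and handling the part on $\{t \ge \zeta\}$ (where $X_\sigma = \Delta$ once $\sigma \ge \zeta$, and where one falls back on $X_\sigma^{-1}(B) \cap \{\sigma \le t\}$ being caught between a $\mathcal{F}^0_t$-set and a null set) completes the argument that $\{X_\sigma \in A\} \cap \{\sigma \le t\} \in \mathcal{M}^\mu_t$ for all $t$.

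Taking the intersection over all $\mu \in \mathcal{P}(E_\Delta)$ gives $\{X_\sigma \in A\} \cap \{\sigma \le t\} \in \mathcal{M}_t$ for every $t \ge 0$, and a parallel (in fact easier, since $\mathcal{M}^\mu_\infty$ absorbs all these null modifications) argument gives $\{X_\sigma \in A\} \in \mathcal{M}_\infty$; hence $\{X_\sigma \in A\} \in \mathcal{M}_\sigma$, which is exactly $X_\sigma \in \mathcal{M}_\sigma/\mathcal{B}(E_\Delta)^*$. The main obstacle I anticipate is the bookkeeping forced by the absence of a single measure on the whole path space: every null-set argument must be localized to $\{t < \zeta\}$, where the $\sigma$-finite $\overline{Q}_{\mu,t}$ behaves like a genuine (finite, after the $h$-weighting) measure, so I have to be careful that the sets $A'$ and $B$ chosen to approximate $A$ are taken with respect to the right pushforward measure $\nu_\mu$, and that the completeness statement invoked is precisely the one proved in Lemma \ref{lem-Mmut}(iv) — namely completeness on $\mathcal{M}^\mu_t \cap \{t<\zeta\}$ rather than on all of $\mathcal{M}^\mu_t$. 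The remaining steps (measurability of the pushforward, the set-difference inclusions, passing to the intersection over $\mu$) are routine.
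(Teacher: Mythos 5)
Your proposal is correct and follows essentially the same route as the paper's proof: reduce to Remark \ref{remark-X_t measureble stopping time 2}(ii) for sets in $\mathcal{B}(E_\Delta)$, push $P^h_\mu$ forward along $X_\sigma$ to get a finite Borel measure $\nu$, approximate $A\in\mathcal{B}(E_\Delta)^*$ by $A'\in\mathcal{B}(E_\Delta)$ up to a $\nu$-null set $B$, and then use Lemma \ref{lem-Mmut} (together with Lemma \ref{thm-relation M and F} to justify that the relevant sets are $P^h_\mu$-measurable) to absorb $X_\sigma^{-1}(B)\cap\{\sigma\le t\}$ into the $P^h_\mu(\cdot;t<\zeta)$-null modifications allowed by $\mathcal{M}^\mu_t$, then intersect over $\mu$. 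The one cosmetic difference is that the paper defines the pushforward $\nu(C):=P^h_\mu(X_\sigma\in C;\sigma\le t,t<\zeta)$ separately for each $t$ and reruns the argument for each $t$, whereas you use a single $t$-independent pushforward on $\{\sigma<\zeta\}$, which lets one choose $A'$ and $B$ once and for all; this is a minor streamlining and does not change the substance of the argument.
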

\begin{proof}
By Remark \ref{remark-X_t measureble stopping time 2}, for  $C\in\mathcal{B}(E_{\Delta})$ and $t\geq 0$, we have $\{X_{\sigma}\in C\}\cap\{\sigma\leq t\}\in\mathcal{M}_t.$ Hence,  for any $\mu\in\mathcal{P}(E_{\Delta})$, it follows from  Lemma \ref{thm-relation M and F} that $\{X_{\sigma}\in C\}\cap\{\sigma\leq t\}\cap\{t<\zeta\}\in\mathcal{F}^{h,\mu}_t$.
We write $\nu(C):= P^{h}_{\mu}(X_{\sigma}\in C;\sigma\leq t,t<\zeta)$ for $C\in\mathcal{B}(E_{\Delta})$, then $\nu$ is a finite measure on $\mathcal{B}(E_{\Delta})$.

For any $A\in\mathcal{B}(E_{\Delta})^*$, there exist $A',B\in\mathcal{B}(E_{\Delta})$ such that $A\triangle A'\subset B$ and $\nu(B)=0$. Then,
\begin{align*}
(\{X_{\sigma}\in A\}&\triangle\{X_{\sigma}\in A'\})\cap\{\sigma\leq t\}\\
&=\{X_{\sigma}\in A \triangle A'\}
\cap\{\sigma\leq t\}
\subset\{X_{\sigma}\in B\}\cap\{\sigma\leq t\},
\end{align*}
and
$$P^{h}_{\mu}(\{X_{\sigma}\in B\}\cap \{\sigma\leq t\};t<\zeta)=\nu(B)=0.$$
By Lemma \ref{lem-Mmut} we see that $\{X_{\sigma}\in A\}\cap\{\sigma\leq t\}\in\mathcal{M}^{\mu}_t$. Since $\mu\in\mathcal{P}(E_{\Delta})$ is arbitrary, hence $\{X_{\sigma}\in A\}\cap\{\sigma\leq t\}\in\mathcal{M}_t$, which means $\{X_{\sigma}\in A\}\in\mathcal{M}_{\sigma}$.
\end{proof}

For an $\{\mathcal{M}^{\mu}_t\}$-stopping time $\sigma,$ we set
\begin{equation}\label{sigmazeta}
\sigma_{\zeta}:=\sigma I_{\{\sigma<\zeta\}}+ \infty I_{\{\sigma \geq \zeta\}}.
\end{equation}
Recall $(\mathcal{F}^{h,\mu}_{t})_{t\geq 0}$ is the $P^h_{\mu}$-augmentation  of the filtration $(\mathcal{F}^{0}_{t})_{t\geq 0}$ in $\mathcal{F}^{h}_{\infty}$ (cf. (\ref{ali-Fhtmu}) ).
\begin{lemma}\label{thm-sigmazeta}
(i) If $\sigma$ is  an $\{\mathcal{M}^{\mu}_t\}$-stopping time, then for any $h\in \mathcal{H},$   $\sigma_{\zeta}$ is  an $(\mathcal{F}^{h,\mu}_{t})_{t\geq 0}$-stopping time, and $\Lambda\cap\{\sigma<\zeta\}\in\mathcal{F}^{h,\mu}_{\sigma_{\zeta}}$ for $\Lambda\in\mathcal{M}^{\mu}_{\sigma}$.

(ii) If $\sigma$ is an $\{\mathcal{M}_t\}$-stopping time and $\Lambda\in\mathcal{M}_{\sigma}$, then for any $h\in \mathcal{H},$ $\sigma_{\zeta}$ is an $(\mathcal{F}^{h}_{t})_{t\geq 0}$-stopping time and $\Lambda\cap\{\sigma<\zeta\}\in\mathcal{F}^{h}_{\sigma_{\zeta}}$ .
\end{lemma}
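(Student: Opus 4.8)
The plan is to reduce everything to the already-established relation between $\mathcal{M}^{\mu}_t$ and $\mathcal{F}^{h,\mu}_t$ (Lemma \ref{thm-relation M and F} (i)) together with a standard restriction-of-stopping-times argument. First I would treat Assertion (i). Fix $h\in\mathcal{H}$ and $\mu\in\mathcal{P}(E_{\Delta})$. To see that $\sigma_{\zeta}$ is an $(\mathcal{F}^{h,\mu}_t)_{t\geq0}$-stopping time, I would compute, for $t\geq0$,
\begin{equation}
\{\sigma_{\zeta}\leq t\}=\{\sigma\leq t\}\cap\{\sigma<\zeta\}=\bigcup_{s\in\mathds{Q}\cap[0,t]}\bigl(\{\sigma\leq s\}\cap\{s<\zeta\}\bigr)\cup\bigl(\{\sigma\leq t\}\cap\{\sigma<\zeta\leq t\}\bigr),
\end{equation}
or more cleanly use $\{\sigma_{\zeta}\leq t\}=\bigl(\{\sigma\leq t\}\cap\{t<\zeta\}\bigr)\cup\bigcup_{s\in\mathds{Q}\cap[0,t)}\bigl(\{\sigma\leq s\}\cap\{s<\zeta\}\bigr)$, valid because $\zeta$ is the entrance time to $\{\Delta\}$ and the paths are right continuous, so $\{\sigma<\zeta\}\cap\{\sigma\leq t\}$ splits according to whether $\zeta>t$ or $\zeta\leq t$, and in the latter case $\sigma<\zeta\leq t$ forces $\sigma\leq s<\zeta$ for some rational $s<t$. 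Each set $\{\sigma\leq s\}\cap\{s<\zeta\}$ lies in $\mathcal{M}^{\mu}_s\cap\{s<\zeta\}$ (since $\sigma$ is an $\{\mathcal{M}^{\mu}_t\}$-stopping time and $\{s<\zeta\}\in\mathcal{F}^0_{s}$), hence in $\mathcal{F}^{h,\mu}_s\subset\mathcal{F}^{h,\mu}_t$ by Lemma \ref{thm-relation M and F} (i); the set $\{\sigma\leq t\}\cap\{t<\zeta\}$ is in $\mathcal{F}^{h,\mu}_t$ for the same reason. Thus $\{\sigma_{\zeta}\leq t\}\in\mathcal{F}^{h,\mu}_t$.

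Next, for $\Lambda\in\mathcal{M}^{\mu}_{\sigma}$ I must show $\Lambda\cap\{\sigma<\zeta\}\in\mathcal{F}^{h,\mu}_{\sigma_{\zeta}}$, i.e. $\bigl(\Lambda\cap\{\sigma<\zeta\}\bigr)\cap\{\sigma_{\zeta}\leq t\}\in\mathcal{F}^{h,\mu}_t$ for all $t$. Using $\{\sigma_{\zeta}\leq t\}\cap\{\sigma<\zeta\}=\{\sigma_{\zeta}\leq t\}$ and the decomposition above, it suffices to handle $\Lambda\cap\{\sigma\leq s\}\cap\{s<\zeta\}$ for $s\leq t$ (rational, plus the boundary piece $s=t$). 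By definition of $\mathcal{M}^{\mu}_{\sigma}$ we have $\Lambda\cap\{\sigma\leq s\}\in\mathcal{M}^{\mu}_s$, so $\Lambda\cap\{\sigma\leq s\}\cap\{s<\zeta\}\in\mathcal{M}^{\mu}_s\cap\{s<\zeta\}\subset\mathcal{F}^{h,\mu}_s\subset\mathcal{F}^{h,\mu}_t$ again by Lemma \ref{thm-relation M and F} (i). Taking the (countable) union over rational $s\leq t$ together with the boundary term gives the claim; hence $\Lambda\cap\{\sigma<\zeta\}\in\mathcal{F}^{h,\mu}_{\sigma_{\zeta}}$.

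For Assertion (ii), the argument is identical after replacing $\mathcal{M}^{\mu}$ by $\mathcal{M}=\bigcap_{\mu}\mathcal{M}^{\mu}$ and $\mathcal{F}^{h,\mu}$ by $\mathcal{F}^{h}$: since an $\{\mathcal{M}_t\}$-stopping time is in particular an $\{\mathcal{M}^{\mu}_t\}$-stopping time for every $\mu$, part (i) gives $\sigma_{\zeta}$ is $(\mathcal{F}^{h,\mu}_t)$-stopping for every $\mu$; but I can do better and work directly, noting $\mathcal{M}_t\cap\{t<\zeta\}\subset\mathcal{F}^h_t$ — this last inclusion is what I expect to be the only point needing care, and it should follow because $\mathcal{F}^h_t$ is already $P^h_x$-complete (indeed $P^h_\mu$-complete) and contains $\mathcal{F}^0_{t^+}$, so any $\Lambda\in\mathcal{M}_t\subset\mathcal{M}^{\mu}_t=\mathcal{G}^{\mu,1}_t$ differs from a set in $\mathcal{F}^0_t$ by a subset of a $P^h_\mu$-null set in $\mathcal{F}^0_{t^+}\subset\mathcal{F}^h_t$, and intersecting with $\{t<\zeta\}$ keeps us inside $\mathcal{F}^h_t$ (using Lemma \ref{lem-Mmut} (iii) to transfer nullity). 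The main obstacle, then, is not any deep estimate but getting the bookkeeping of the ``$\cap\{s<\zeta\}$'' truncations right so that the reduction to Lemma \ref{thm-relation M and F} is clean at the boundary time $s=t$; everything else is routine stopping-time manipulation. I would conclude by remarking that $\mathcal{F}^h_\infty=\bigvee_t\mathcal{F}^h_t$ handles the membership $\Lambda\cap\{\sigma<\zeta\}\in\mathcal{M}_\infty$-to-$\mathcal{F}^h_\infty$ step automatically, so the stated conclusion $\Lambda\cap\{\sigma<\zeta\}\in\mathcal{F}^h_{\sigma_\zeta}$ follows.
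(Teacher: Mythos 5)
Your proof is correct, but it takes a genuinely different route from the paper's. The paper first treats the case of a discrete $\{\mathcal{M}^{\mu}_t\}$-stopping time $\sigma$, verifying $\Lambda\cap\{\sigma_{\zeta}=a\}=[\Lambda\cap\{\sigma=a\}]\cap\{a<\zeta\}\in\mathcal{F}^{h,\mu}_a$ directly from Lemma \ref{thm-relation M and F}~(i), and then passes to general $\sigma$ via the dyadic approximation $\sigma_n\downarrow\sigma$ of (\ref{sigman}), using $\sigma_{n,\zeta}\downarrow\sigma_{\zeta}$ together with the right continuity of $(\mathcal{F}^{h,\mu}_t)_{t\geq 0}$. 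You instead avoid any approximation by noting the exact rational decomposition
\begin{equation*}
\{\sigma_{\zeta}\leq t\}=\bigl(\{\sigma\leq t\}\cap\{t<\zeta\}\bigr)\cup\bigcup_{s\in\mathds{Q}\cap[0,t)}\bigl(\{\sigma\leq s\}\cap\{s<\zeta\}\bigr),
\end{equation*}
so that each piece $\{\sigma\leq s\}\cap\{s<\zeta\}$ (and likewise $\Lambda\cap\{\sigma\leq s\}\cap\{s<\zeta\}$) is handled term by term by Lemma \ref{thm-relation M and F}~(i). This is a clean and slightly more elementary argument; what it buys is that the whole proof is one step rather than a two-stage (discrete then limiting) argument, and you never need to invoke right continuity of $(\mathcal{F}^{h,\mu}_t)_{t\geq 0}$ for decreasing limits of stopping times. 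Both routes rest on the same essential input, namely Lemma \ref{thm-relation M and F}~(i). Your treatment of (ii) is a bit more cursory — the key point, namely that $\Lambda\in\mathcal{M}_t$ implies $\Lambda\cap\{t<\zeta\}\in\mathcal{F}^{h}_t$, does follow from applying Lemma \ref{thm-relation M and F}~(i) to every $\mu\in\mathcal{P}(E_{\Delta})$ and taking the intersection (using $\mathcal{F}^h_t=\bigcap_{\mu}\mathcal{F}^{h,\mu}_t$), but you should state this explicitly rather than appeal loosely to ``completeness''; the paper simply reads (ii) off (i), which is the more economical move.
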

\begin{proof} Assertion (ii) follows directly from Assertion (i). Below we prove only Assertion (i). Let $\sigma$ be an $\{\mathcal{M}^{\mu}_t\}$-stopping time and $\Lambda\in\mathcal{M}^{\mu}_{\sigma}.$ If $\sigma$ takes only discrete values, for  $a\in\mathds{R}_+$,  by Lemma \ref{thm-relation M and F} (i), we have for any $h\in \mathcal{H},$
\begin{align*}
\Lambda\cap\{\sigma_{\zeta}=a\}&=
[\Lambda\cap\{\sigma<\zeta\}]\cap\{\sigma=a\}\\
&=[\Lambda\cap\{\sigma=a\}]
\cap\{a<\zeta\}\in\mathcal{F}^{h,\mu}_a,
\end{align*}
 which implies that $\sigma_{\zeta}$ is an $(\mathcal{F}^{h,\mu}_{t})_{t\geq 0}$-stopping time and $\Lambda\cap\{\sigma<\zeta\}\in\mathcal{F}^{h,\mu}_{\sigma_{\zeta}}.$
In general case, we set
\begin{equation}\label{sigman}
\sigma_n =
\begin{cases} \frac{k}{2^n}, &~\mbox{for all} ~~ \frac{k-1}{2^n}\leq\sigma<\frac{k}{2^n},\\
+\infty , &~~ \sigma=+\infty,
\end{cases}
\end{equation}
where $k=1,2,\dots$, $n=1,2,\dots$.
Then $\sigma_n(\omega)$ decreases to $\sigma(\omega)$ and  $\sigma_{n,\zeta}(\omega)$ decreases to $\sigma_{\zeta}(\omega)$ as $n\to+\infty$. Since $\{\mathcal{M}^{\mu}_t\}$ is right continuous, by a routine argument we can show that $\sigma_{\zeta}$ is an $(\mathcal{F}^{h,\mu}_{t})_{t\geq 0}$-stopping time and $\Lambda\cap\{\sigma<\zeta\}\in\mathcal{F}^{h,\mu}_{\sigma_{\zeta}}$ .
\end{proof}

\begin{lemma}\label{lemma-strong pseudo Markov01}
Let $\mu\in\mathcal{P}(E_{\Delta})$ and $\sigma$ be an $\{\mathcal{M}^{\mu}_t\}$-stopping time. If $\Lambda\in\mathcal{M}^{\mu}_{\sigma}$ with $P^h_{\mu}(\Lambda;\sigma<\zeta)=0$, then for any $\Lambda'\subset\Lambda$, we have $\Lambda'\cap\{\sigma<\zeta\}\in\mathcal{M}^{\mu}_{\sigma}$.
\end{lemma}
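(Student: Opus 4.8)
The plan is to check directly, from the definition of $\mathcal{M}^{\mu}_{\sigma}$, the two membership conditions for the set $\Lambda'\cap\{\sigma<\zeta\}$: that it lies in $\mathcal{M}^{\mu}_{\infty}$, and that $\big(\Lambda'\cap\{\sigma<\zeta\}\big)\cap\{\sigma\le t\}\in\mathcal{M}^{\mu}_t$ for every $t\ge0$. The second is the substantial one, and the idea is that the ambient set $\Lambda\cap\{\sigma\le t\}$ is negligible for $P^h_{\mu}(\,\cdot\,;t<\zeta)$, so that the completeness of $\mathcal{M}^{\mu}_t$ with respect to this measure (Lemma \ref{lem-Mmut} (iv)) applies to the subset $\Lambda'\cap\{\sigma<\zeta\}\cap\{\sigma\le t\}$.

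First I would note that, since $\Lambda\in\mathcal{M}^{\mu}_{\sigma}$, Lemma \ref{thm-sigmazeta} (i) gives $\Lambda\cap\{\sigma<\zeta\}\in\mathcal{F}^{h,\mu}_{\sigma_{\zeta}}\subset\mathcal{F}^{h,\mu}_{\infty}$; in particular the hypothesis $P^h_{\mu}(\Lambda;\sigma<\zeta)=0$ is meaningful, and by completeness of the $P^h_{\mu}$-augmented $\sigma$-field every subset of $\Lambda\cap\{\sigma<\zeta\}$ is $P^h_{\mu}$-null. Now fix $t\ge0$ and put $N:=\Lambda\cap\{\sigma\le t\}$, so $N\in\mathcal{M}^{\mu}_t$ by the definition of $\mathcal{M}^{\mu}_{\sigma}$. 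On $N\cap\{t<\zeta\}$ one has $\sigma\le t<\zeta$, hence $N\cap\{t<\zeta\}\subset\Lambda\cap\{\sigma<\zeta\}$, and so $N\cap\{t<\zeta\}$ is $P^h_{\mu}$-null. I would then unwind the description $\mathcal{M}^{\mu}_t=\mathcal{G}^{\mu,1}_t$ in (\ref{ali-Gmu1}): picking $N'\in\mathcal{F}^0_t$ and $\Gamma\in\mathcal{F}^0_{t^+}$ with $N\triangle N'\subset\Gamma$ and $P^h_{\mu}(\Gamma;t<\zeta)=0$, one gets $N'\cap\{t<\zeta\}\subset(N\cup\Gamma)\cap\{t<\zeta\}$, whence $P^h_{\mu}(N';t<\zeta)=0$; that is, the extended value $P^h_{\mu}(N;t<\zeta)$ equals $0$.

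With this established, since $\Lambda'\subset\Lambda$ the set $\Lambda'\cap\{\sigma<\zeta\}\cap\{\sigma\le t\}$ is contained in $N$, which is a member of $\mathcal{M}^{\mu}_t$ of zero $P^h_{\mu}(\,\cdot\,;t<\zeta)$-measure; by the completeness part of Lemma \ref{lem-Mmut} (iv) it therefore belongs to $\mathcal{M}^{\mu}_t$. As $t\ge0$ was arbitrary, the second membership condition holds. For the first, I would use $\{\sigma<\zeta\}\subset\{\sigma<\infty\}=\bigcup_{n\ge1}\{\sigma\le n\}$ to write $\Lambda'\cap\{\sigma<\zeta\}=\bigcup_{n\ge1}\big(\Lambda'\cap\{\sigma<\zeta\}\cap\{\sigma\le n\}\big)$, a countable union of sets each lying in $\mathcal{M}^{\mu}_n\subset\mathcal{M}^{\mu}_{\infty}$; hence $\Lambda'\cap\{\sigma<\zeta\}\in\mathcal{M}^{\mu}_{\infty}$, and together with the second condition this yields $\Lambda'\cap\{\sigma<\zeta\}\in\mathcal{M}^{\mu}_{\sigma}$. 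The only genuine obstacle is the bookkeeping in the middle paragraph: one must check that the \emph{extension} of $P^h_{\mu}(\,\cdot\,;t<\zeta)$ really vanishes on $N$, not merely that $N\cap\{t<\zeta\}$ is squeezed between null sets — this forces the descent to the explicit form $\mathcal{G}^{\mu,1}_t$ of $\mathcal{M}^{\mu}_t$.
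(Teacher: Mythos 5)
Your proof is correct, and it takes a genuinely different route from the one in the paper. The paper's proof first treats discrete $\{\mathcal{M}^{\mu}_t\}$-stopping times $\sigma$ (decomposing the event along the countably many values of $\sigma$ and invoking the completeness of each $\mathcal{M}^{\mu}_{a_i}$), and then handles a general $\sigma$ by dyadic approximation $\sigma_n\downarrow\sigma$, writing $\Lambda'\cap\{\sigma<\zeta\}\cap\{\sigma<t\}$ as an increasing union over $n$ and invoking the right continuity of $\{\mathcal{M}^{\mu}_t\}$. You instead verify the two defining conditions of $\mathcal{M}^{\mu}_{\sigma}$ directly for arbitrary $\sigma$: you observe that $N=\Lambda\cap\{\sigma\leq t\}\in\mathcal{M}^{\mu}_t$ with $N\cap\{t<\zeta\}\subset\Lambda\cap\{\sigma<\zeta\}$ already $P^h_{\mu}$-null, then unwind $\mathcal{M}^{\mu}_t=\mathcal{G}^{\mu,1}_t$ to check that the \emph{extended} value $P^h_{\mu}(N;t<\zeta)$ vanishes (which, as you correctly flag, is the only nontrivial bookkeeping, since Lemma \ref{lem-Mmut} (iv) is formulated in terms of the extended measure on $\mathcal{M}^{\mu}_t$), and finally apply the completeness clause of Lemma \ref{lem-Mmut} (iv) to the subset $\Lambda'\cap\{\sigma<\zeta\}\cap\{\sigma\leq t\}\subset N$. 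The membership in $\mathcal{M}^{\mu}_{\infty}$ follows from the countable union over $\{\sigma\leq n\}$, as you note. Your argument avoids the discrete/approximation split entirely and does not use right continuity of the filtration, at the price of the one careful check that the extension of $P^h_{\mu}(\,\cdot\,;t<\zeta)$ actually vanishes on $N$; in that sense it is somewhat more elementary and self-contained, while the paper's version reuses the discrete-time case as a lemma in a pattern that is standard when manipulating stopping times.
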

\begin{proof}
 Suppose first that $\sigma$ is a discrete $\{\mathcal{M}^{\mu}_t\}$-stopping time taking values in $\{a_1,a_2,\dots,a_n,\dots,+\infty\}$.
Then,
\begin{align*}
0
=P^h_{\mu}(\Lambda;\sigma<\zeta)
=\sum^{+\infty}_{i=1}&P^h_{\mu}(\Lambda;\sigma=a_i,\sigma<\zeta)\\
&=\sum^{+\infty}_{i=1}P^h_{\mu}(\Lambda\cap\{\sigma=a_i\};a_i<\zeta).
\end{align*}
by the completeness of $\mathcal{M}^{\mu}_{a_i}$ with respect to $P^h_{\mu}(\,\cdot\,;a_i<\zeta)$, for any $\Lambda'\subset\Lambda$,
we have
\begin{center} $\Lambda'\cap\{\sigma<\zeta\}\cap\{\sigma=a_i\}=\Lambda'\cap
\{\sigma=a_i<\zeta\}\in\mathcal{M}^{\mu}_{a_i}, ~~\forall ~~i\geq 1.$
\end{center}
Hence $\Lambda'\cap\{\sigma<\zeta\}\in\mathcal{M}^{\mu}_{\sigma}.$

For general $\{\mathcal{M}^{\mu}_t\}$-stopping time $\sigma$, there exists a sequence of discrete $\{\mathcal{M}^{\mu}_t\}$-stopping time $(\sigma_n)_{n\geq1}$ such that
$\sigma_n\downarrow \sigma$ as $n\to+\infty$ (cf (\ref{sigman})). Then $\Lambda\in\mathcal{M}^{\mu}_{\sigma}$ and $P^h_{\mu}(\Lambda;\sigma<\zeta)=0$ implies $\Lambda\in\mathcal{M}^{\mu}_{\sigma_n}$ and $P^h_{\mu}(\Lambda;\sigma_n<\zeta)=0$ for all $n\geq 1.$
For any $\Lambda'\subset\Lambda,$ we have
\begin{center}
$\Lambda'\cap\{\sigma<\zeta\}\cap\{\sigma<t\}=
\bigcup\limits^{\infty}_{n=1}[\Lambda'\cap\{\sigma_n<\zeta\}\cap\{\sigma_n<t\}]
\in\mathcal{M}^{\mu}_{t}$.
\end{center}
Therefore, by the right continuity of $\{\mathcal{M}^{\mu}_{t}\}$ we get  $\Lambda'\cap\{\sigma<\zeta\}\in\mathcal{M}^{\mu}_{\sigma}.$
\end{proof}

\begin{lemma}\label{lemma-strong pseudo Markov02}
Let $\sigma$ be an $\{\mathcal{M}_t\}$-stopping time and $t\geq0$, then $\theta^{-1}_{\sigma}\mathcal{M}_t\cap\{t+\sigma <\zeta\}\subset\mathcal{M}_{t+\sigma}$.
\end{lemma}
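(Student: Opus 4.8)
The plan is to reduce the claim about $\{\mathcal{M}_t\}$ to the corresponding fact about the augmented filtrations $\{\mathcal{M}^{\mu}_t\}$, and then to transfer each $\{\mathcal{M}^{\mu}_t\}$-statement, via Lemma \ref{thm-sigmazeta} and Lemma \ref{thm-relation M and F}, into the familiar setting of the right process $\mathbf{M}^h$ with its $P^h_{\mu}$-augmented filtration $(\mathcal{F}^{h,\mu}_t)_{t\geq 0}$, where the strong Markov property and the usual identity $\theta^{-1}_{\sigma}\mathcal{F}^h_t\subset\mathcal{F}^h_{\sigma+t}$ are available. First I would note that by (\ref{ali-Mnut all}) it suffices to prove $\theta^{-1}_{\sigma}\mathcal{M}^{\mu}_t\cap\{t+\sigma<\zeta\}\subset\mathcal{M}^{\mu}_{t+\sigma}$ for every $\mu\in\mathcal{P}(E_{\Delta})$; since $\sigma$ is an $\{\mathcal{M}_t\}$-stopping time it is in particular an $\{\mathcal{M}^{\mu}_t\}$-stopping time, and so is $\sigma+t$.

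Next, fix $\Lambda\in\mathcal{M}^{\mu}_t$; I want to show $\theta^{-1}_{\sigma}\Lambda\cap\{t+\sigma<\zeta\}\in\mathcal{M}^{\mu}_{t+\sigma}$. The key structural input is Lemma \ref{thm-sigmazeta}: $\sigma_{\zeta}$ is an $(\mathcal{F}^{h,\mu}_t)_{t\geq 0}$-stopping time, and on $\{\sigma<\zeta\}$ the shift $\theta_{\sigma}$ coincides with $\theta_{\sigma_{\zeta}}$. By Lemma \ref{thm-relation M and F}(i), $\Lambda\cap\{t<\zeta\}\in\mathcal{F}^{h,\mu}_t$, and because $\mathbf{M}^h$ never revives after hitting $\Delta$, one has $\theta^{-1}_{\sigma_{\zeta}}(\Lambda\cap\{t<\zeta\})\cap\{t+\sigma<\zeta\}=\theta^{-1}_{\sigma}\Lambda\cap\{t+\sigma<\zeta\}$. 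Applying the standard fact $\theta^{-1}_{\rho}\mathcal{F}^{h,\mu}_t\subset\mathcal{F}^{h,\mu}_{\rho+t}$ for the $(\mathcal{F}^{h,\mu}_t)$-stopping time $\rho=\sigma_{\zeta}$ (valid since $\mathbf{M}^h$ is strong Markov and $(\mathcal{F}^{h,\mu}_t)$ is the usual augmentation), I get $\theta^{-1}_{\sigma}\Lambda\cap\{t+\sigma<\zeta\}\in\mathcal{F}^{h,\mu}_{\sigma_{\zeta}+t}$. Intersecting with $\{\sigma+t\leq s\}$ for $s\geq 0$ and noting that on $\{\sigma+t<\zeta\}$ we have $\sigma_{\zeta}+t=\sigma+t$, this set lies in $\mathcal{F}^{h,\mu}_s\cap\{s<\zeta\}$ whenever $\sigma+t\leq s$; since $\mathcal{M}^{\mu}_s$ contains every $\mathcal{F}^{h,\mu}_s$-set that is carried by $\{s<\zeta\}$ — this is the converse direction built into definitions (\ref{ali-Mmut}) and (\ref{ali-Fhtmu}) together with Lemma \ref{lem-Mmut} and the completeness in Lemma \ref{lem-Mmut}(iv) — we conclude $\theta^{-1}_{\sigma}\Lambda\cap\{t+\sigma<\zeta\}\cap\{\sigma+t\leq s\}\in\mathcal{M}^{\mu}_s$ for all $s$, hence the set belongs to $\mathcal{M}^{\mu}_{\sigma+t}$ by (\ref{ali-Mmut strong}).

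I expect the main obstacle to be the careful bookkeeping of the cemetery time $\zeta$: the distribution flow $\overline{Q}_{\mu,t}$ and the measures $P^h_{\mu}(\cdot;t<\zeta)$ are only comparable \emph{on} $\{t<\zeta\}$, so every membership claim must be restricted to the appropriate lifetime event, and the substitution $\theta_{\sigma}=\theta_{\sigma_{\zeta}}$ is only legitimate on $\{\sigma<\zeta\}$. One has to check that the event $\{t+\sigma<\zeta\}$ genuinely forces both $\{\sigma<\zeta\}$ and, after the shift, $\{t<\zeta\}\circ\theta_{\sigma}$, using that $\zeta=\sigma\cdot 1+\zeta\circ\theta_{\sigma}$ on $\{\sigma<\zeta\}$ — the additivity of the lifetime under shifts. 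A second point requiring care is that the auxiliary filtration manipulations should be carried out first for discrete stopping times (where everything is a countable union over the values of $\sigma$, so Lemma \ref{lemma-strong pseudo Markov01}-type completeness arguments apply directly) and then passed to the limit $\sigma_n\downarrow\sigma$ as in (\ref{sigman}), invoking the right continuity of $\{\mathcal{M}^{\mu}_t\}$ from Theorem \ref{thm-Mu right continuous} to pass from $\bigcup_n\mathcal{M}^{\mu}_{\sigma_n+t}$ back to $\mathcal{M}^{\mu}_{(\sigma+t)^+}=\mathcal{M}^{\mu}_{\sigma+t}$.
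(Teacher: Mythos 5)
Your proposed reduction contains a genuine gap. You reduce the claim to showing, for each fixed $\mu$, the per-$\mu$ inclusion $\theta^{-1}_{\sigma}\mathcal{M}^{\mu}_t\cap\{t+\sigma<\zeta\}\subset\mathcal{M}^{\mu}_{t+\sigma}$, and you then invoke what you call the ``standard fact'' $\theta^{-1}_{\rho}\mathcal{F}^{h,\mu}_t\subset\mathcal{F}^{h,\mu}_{\rho+t}$. Neither of these is correct for a \emph{single-measure} completion. If $\Lambda\in\mathcal{M}^{\mu}_t$ with exceptional set $\Gamma\in\mathcal{F}^0_{t^+}$ satisfying $P^h_{\mu}(\Gamma;t<\zeta)=0$, then after shifting the strong Markov property gives
\[
P^h_{\mu}(\theta^{-1}_{\sigma}\Gamma;\,t+\sigma<\zeta)=E^h_{\mu}\bigl[P^h_{X_{\sigma_{\zeta}}}(\Gamma;t<\zeta);\sigma_{\zeta}<\zeta\bigr]=P^h_{\nu}(\Gamma;t<\zeta),
\]
where $\nu(\cdot):=P^h_{\mu}(X_{\sigma_{\zeta}}\in\cdot\,;\sigma_{\zeta}<\zeta)$. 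There is no reason for $P^h_{\nu}(\Gamma;t<\zeta)$ to vanish merely because $P^h_{\mu}(\Gamma;t<\zeta)=0$; the $\mu$-completion and the $\nu$-completion are different $\sigma$-fields. The same objection defeats $\theta^{-1}_{\rho}\mathcal{F}^{h,\mu}_t\subset\mathcal{F}^{h,\mu}_{\rho+t}$: that shift identity holds for the universal filtration $\mathcal{F}^h_t=\bigcap_{\mu'}\mathcal{F}^{h,\mu'}_t$, but not for the single completion $\mathcal{F}^{h,\mu}_t$.

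The paper avoids the gap precisely by using the full hypothesis $\Lambda\in\mathcal{M}_t=\bigcap_{\mu'}\mathcal{M}^{\mu'}_t$: it introduces the shifted measure $\nu$ above, observes that $\Lambda\in\mathcal{M}^{\nu}_t$, and hence chooses a decomposition of $\Lambda$ whose exceptional set is $P^h_{\nu}(\cdot;t<\zeta)$-null, which the strong Markov property then converts into a $P^h_{\mu}$-null set \emph{after} the shift. Moreover the paper first proves a clean measurability step without any completion, namely $\theta^{-1}_{\sigma}\mathcal{F}^0_t\subset\mathcal{M}_{t+\sigma}$ via the $\mathcal{M}_\sigma/\mathcal{B}(E_\Delta)$-measurability of $X_\sigma$ (Remark \ref{remark-X_t measureble stopping time 2}) and a monotone class argument, and only then attacks the completion using $\nu$. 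To salvage your approach you would need to re-index the completion by the shifted law of $X_{\sigma_{\zeta}}$, not by $\mu$ itself; as written, the argument does not close.
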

\begin{proof}
Firstly, let $\Lambda=\bigcap\limits^{n}_{i=1}\{X_{t_i}\in B_i\}$ for some $B_i\in\mathcal{B}(E_{\Delta}), i=1,2,\dots,n,$ and $0\leq t_1<t_2<\dots<t_n\leq t$.
 Then by Remark \ref{remark-X_t measureble stopping time 2} we get
\begin{center}
$\theta^{-1}_{\sigma}\Lambda=\bigcap\limits^{n}_{i=1}\{X_{t_i+\sigma}\in B_i\}\in\mathcal{M}_{t+\sigma}.$
\end{center}
By monotone class argument we conclude that  $\theta^{-1}_{\sigma}\mathcal{F}^0_t\subset\mathcal{M}_{t+\sigma},$ and consequently $\theta^{-1}_{\sigma}\mathcal{F}^0_{t^+}\subset\mathcal{M}_{t+\sigma}$ because $\{\mathcal{M}_{t}\}$ is right continuous.

Secondly, for any $\mu\in\mathcal{P}(E_{\Delta})$, we can define a
finite measure $\nu$ on $\mathcal{B}(E_{\Delta})$ by setting $\nu(A):=P^h_{\mu}(X_{\sigma_{\zeta}}\in A;\sigma_{\zeta}<\zeta)$ for $A\in\mathcal{B}(E_{\Delta})$. Then for any $\Lambda\in\mathcal{M}_t\subset\mathcal{M}^{\nu}_t$, there exist $\Lambda'\in\mathcal{F}^0_t$ and $\Gamma\in\mathcal{F}^0_{t^+}$ such that
$\Lambda\Delta\Lambda'\subset\Gamma$ and $P^h_{\nu}(\Gamma;t<\zeta)=0$. By the conclusion which we have just proved above, it holds that
$\theta^{-1}_{\sigma}\Gamma\in\mathcal{M}_{t+\sigma}$.
We have
\begin{align*}
P^h_{\mu}(\theta^{-1}_{\sigma}\Gamma;&t+\sigma<\zeta)
=P^h_{\mu}(\theta^{-1}_{\sigma}\Gamma;\sigma<\zeta,t<\zeta\comp\theta_{\sigma})
\\
&=P^h_{\mu}(\theta^{-1}_{\sigma_{\zeta}}\Gamma;\sigma_{\zeta}<\zeta,
t<\zeta\comp\theta_{\sigma_{\zeta}})\\
&=E^h_{\mu}[P^h_{X_{\sigma_{\zeta}}}(\Gamma;t<\zeta);\sigma_{\zeta}<\zeta]
=P^h_{\nu}(\Gamma;t<\zeta)=0.
\end{align*}
Hence, because  $(\theta^{-1}_{\sigma}\Lambda)\Delta(\theta^{-1}_{\sigma}\Lambda')
 =\theta^{-1}_{\sigma}(\Lambda\Delta\Lambda')\subset\theta^{-1}_{\sigma}\Gamma,$ by Lemma \ref{lemma-strong pseudo Markov01} we get $[(\theta^{-1}_{\sigma}\Lambda)\setminus(\theta^{-1}_{\sigma}\Lambda')]
\cap\{t+\sigma< \zeta\}\in\mathcal{M}^{\mu}_{t+\sigma}$ and
 $[(\theta^{-1}_{\sigma}\Lambda')\setminus(\theta^{-1}_{\sigma}\Lambda)]
\cap\{t+\sigma< \zeta\}\in\mathcal{M}^{\mu}_{t+\sigma}.$  From these two facts together with the fact that $\theta^{-1}_{\sigma}\Lambda'\in\mathcal{M}_{t+\sigma}$ we get
$\theta^{-1}_{\sigma}\Lambda\cap\{t+\sigma<\zeta\}\in\mathcal
{M}^{\mu}_{t+\sigma}.$  Consequently, $\theta^{-1}_{\sigma}\Lambda\cap\{t+\sigma<\zeta\}\in\mathcal{M}_{t+\sigma}$
because $\mu\in\mathcal{P}(E_{\Delta})$ is arbitrary.
\end{proof}

Below we denote by $\mathcal{T}$ the collection of all the $\{\mathcal{M}_t\}$-stopping times.
\begin{definition}\label{def-exdistribution}
 For $\sigma \in \mathcal{T}, x\in E,$  we define a measure $\overline{Q}_{x,\sigma}$ on $\mathcal{M}_{\sigma}$ by setting
\begin{equation}\label{eq-exdistribution}
\overline{Q}_{x,\sigma}(\Lambda):= \overline{Q}_{x,\sigma}(\Lambda; \sigma<\zeta ):=h(x)E^{h}_{x}(\frac{e^{\alpha \sigma}I_{\Lambda}}{h(X_{\sigma})}; \sigma<\zeta), ~ \forall ~ \Lambda\in\mathcal{M}_{\sigma},
\end{equation}
where $I_{\Lambda}$ is the indicator function of $\Lambda$, $E^{h}_{x}$ is the expectation related to $P^h_x$. We call ~$\overline{Q}_{x,{\sigma}}$ a {\bf \it $\sigma$-finite distribution up to time $\sigma$} (in short, distribution up to $\sigma$), and call ~$(\overline{Q}_{x,\sigma})_{\sigma \in \mathcal{T}}$ an {\bf \it expanded  $\sigma$-finite distribution flow (in short, expanded distribution flow) associated with $(\mathcal{E}, D(\mathcal{E}))$}.
\end{definition}
\begin{proposition}\label{pro-exdistribution}
(i) Let $\sigma \in \mathcal{T}$. Then $\overline{Q}_{x,\sigma}(\cdot)$ is a well defined $\sigma$-finite measure on $\mathcal{M}_{\sigma}$ for fixed $ x\in E,$
and $\overline{Q}_{x,\sigma}(\Lambda; \sigma<\zeta )$ being a function of $x$ is $\mathcal{B}(E_{\Delta})^*$ measurable for fixed $\Lambda\in\mathcal{M}_{\sigma}$.

(ii) The definition of $\overline{Q}_{x,\sigma}$ is independent of the choice of $h\in \mathcal{H}.$  More precisely, if $\overline{Q}_{x,\sigma}'$ is defined by (\ref{eq-exdistribution}) with $h$  replaced by another $h' \in \mathcal{H}.$ Then $\overline{Q}_{x,\sigma}$ and $\overline{Q}'_{x,\sigma}$ are equivalent to each other. That is, there exists a Borel set $S\subset E$ such that $\overline{Q}_{x,\sigma}(\cdot)$ and $\overline{Q}_{x,\sigma}'(\cdot)$ are identical for all $x\in S$ and $E\setminus S$ is $\mathcal{E}$-exceptional (cf. Definition \ref{equivalent}).
\end{proposition}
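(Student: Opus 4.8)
The plan is to reduce everything to properties of the $h$-associated process $\mathbf{M}^h$ via Lemma \ref{thm-sigmazeta}, which guarantees that for $\sigma\in\mathcal{T}$ and $\Lambda\in\mathcal{M}_\sigma$, the truncation $\sigma_\zeta$ is an $(\mathcal{F}^h_t)_{t\ge0}$-stopping time and $\Lambda\cap\{\sigma<\zeta\}\in\mathcal{F}^h_{\sigma_\zeta}$. For part (i), the $\sigma$-finiteness of $\overline{Q}_{x,\sigma}$ follows exactly as for $\overline{Q}_{x,t}$: on the set $\{X_\sigma\in F_k,\ X_0\in F_k\}$ the integrand $h(x)e^{\alpha\sigma}/h(X_\sigma)$ is bounded (using $h\in C(\{F_k\})$ and $h$ strictly positive on $F_k$), and $\Omega=\{\sigma=\infty\}\cup\bigcup_k\{X_\sigma\in F_k,\ X_0\in F_k\}$ up to a $\overline{Q}_{x,\sigma}$-null set since $X_\sigma\in F_k$ eventually $P^h_x$-a.s. on $\{\sigma<\zeta\}$ by the nest property. (Here I will need Remark \ref{remark-X_t measureble stopping time 2} so that $\{X_\sigma\in F_k\}\in\mathcal{M}_\sigma$.) That $\overline{Q}_{x,\sigma}(\Lambda;\sigma<\zeta)$ is $\mathcal{B}(E_\Delta)^*$-measurable in $x$ follows because $x\mapsto E^h_x[\,\cdot\,]$ is $\mathcal{B}(E_\Delta)^*$-measurable for sets in $\mathcal{F}^h_{\sigma_\zeta}\subset\mathcal{F}^h_\infty$ — this is the standard measurability of hitting/stopped functionals for a right process — combined with Proposition \ref{pro-unversal} and Lemma \ref{thm-sigmazeta} to legitimately place $\Lambda\cap\{\sigma<\zeta\}$ and the factor $h(X_\sigma)^{-1}$ inside $\mathcal{F}^h_{\sigma_\zeta}$. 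Well-definedness on $\mathcal{M}_\sigma$ (independence of the chosen $\Lambda'$-representative) reduces to the null-set consistency already established in Lemma \ref{lem-Mmut}(iii) and Lemma \ref{lemma-strong pseudo Markov01}.

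For part (ii), the key is to express $\overline{Q}_{x,\sigma}$ purely in terms of the kernels $Q^{(h)}_t(x,\cdot)$, $t\ge0$, of $(T_t)$, which were shown in Lemma \ref{invariantSet} to be $h$-independent on the invariant set $S$. I would first prove the discrete-time analogue of Lemma \ref{lem:distribution} with a stopping time in place of the last deterministic time: if $\sigma$ takes values in a countable set $\{a_1,a_2,\dots\}\cup\{\infty\}$, then for $\Lambda\in\mathcal{M}_\sigma$ of the cylindrical form $\{(X_{t_1},\dots,X_{t_n})\in B,\ \sigma=a_i\}$ with $t_j\le a_i$, the quantity $\overline{Q}_{x,\sigma}(\Lambda;\sigma<\zeta)$ decomposes as $\sum_i\overline{Q}_{x,a_i}(\Lambda\cap\{\sigma=a_i\};a_i<\zeta)$, and each term is a finite iterated integral against the $Q^{(h)}_t$-kernels by Lemma \ref{lem:distribution}; hence it is $h$-independent for $x\in S$. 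A monotone-class argument in $\Lambda$ then extends this to all of $\mathcal{M}_\sigma$ for discrete $\sigma$. Finally, for a general $\sigma\in\mathcal{T}$ I would approximate by the dyadic discretizations $\sigma_n\downarrow\sigma$ of (\ref{sigman}): since $\{\mathcal{M}_t\}$ is right continuous (Theorem \ref{thm-Mu right continuous}), $\Lambda\cap\{\sigma_n<\zeta\}\in\mathcal{M}_{\sigma_n}$, and by right continuity of the paths together with quasi-left-continuity of $\mathbf{M}^h$ one passes to the limit $\overline{Q}_{x,\sigma_n}(\Lambda;\sigma_n<\zeta)\to\overline{Q}_{x,\sigma}(\Lambda;\sigma<\zeta)$ (dominated convergence against the $\sigma$-finite pieces on each $\{X_{\sigma}\in F_k,X_0\in F_k\}$, using continuity of $h$ on $F_k$). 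Since each prelimit is $h$-independent on $S$, so is the limit; taking $S$ as in Lemma \ref{invariantSet} (with $h_1=h,\ h_2=h'$) and noting $E\setminus S$ is $\mathcal{E}$-exceptional gives the claimed equivalence.

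The main obstacle I anticipate is the limit passage $\sigma_n\downarrow\sigma$ in part (ii): one must verify that the $\sigma$-finite measures $\overline{Q}_{x,\sigma_n}$ restricted to a fixed piece $\{X_{\sigma_n}\in F_k,\ X_0\in F_k\}$ converge, which is delicate because the defining set itself moves with $n$ (the point $X_{\sigma_n}$ drifts), and because there is no single dominating measure on $\mathcal{F}^0_\infty$. I would handle this by working under the fixed probability measures $P^h_{h\cdot\mu}$ via identity (\ref{eq: relation Mmu and F}) of Lemma \ref{thm-relation M and F}, where $\sigma_\zeta$ is a genuine stopping time of a right process, so that the convergence $X_{\sigma_{n,\zeta}}\to X_{\sigma_\zeta}$ on $\{\sigma<\zeta\}$ and the boundedness of $e^{\alpha\sigma_n}/h(X_{\sigma_n})$ on the relevant compact pieces give ordinary dominated convergence; right continuity of paths suffices since $\sigma_n\downarrow\sigma$ from above, so quasi-left-continuity is in fact not needed. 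A secondary subtlety is checking that the cylindrical sets used in the monotone-class step actually generate $\mathcal{M}_\sigma$ (not merely $\mathcal{F}^0_\sigma$); this is dealt with by first treating $\Lambda\in\mathcal{F}^0_\sigma$ and then absorbing the $\overline{Q}_{x,\sigma}$-null augmentation using Lemma \ref{lemma-strong pseudo Markov01} exactly as in the proof of well-definedness.
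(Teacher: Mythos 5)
Your proposal follows essentially the same path as the paper: part (i) rests on Lemma \ref{thm-sigmazeta} for well-definedness and on $\Lambda_k$-type cutoff sets for $\sigma$-finiteness, while part (ii) first treats discrete $\sigma$ via Theorem \ref{thm-independent} (equivalently Lemma \ref{lem:distribution}) and then passes to general $\sigma$ by dyadic approximation $\sigma_n\downarrow\sigma$. Two points of detail are worth flagging. First, your $\sigma$-finiteness sets $\{X_\sigma\in F_k,X_0\in F_k\}$ must additionally be intersected with $\{\sigma<k\}$: without this truncation the factor $e^{\alpha\sigma}$ in the integrand $h(x)e^{\alpha\sigma}/h(X_\sigma)$ is unbounded on $\{X_\sigma\in F_k\}$, so the finiteness on each piece fails. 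The paper uses precisely $\Lambda_k=\{\sigma<k,\,X_\sigma\in F_k,\,X_0\in F_k\}$ for this reason. Second, the delicacy you anticipate in the limit passage --- the piece $\{X_{\sigma_n}\in F_k,\,X_0\in F_k\}$ ``moving with $n$'' --- is avoided in the paper by keeping $\Lambda_k$ tied to the fixed stopping time $\sigma$ (not $\sigma_n$) throughout, and taking $\lim_n\overline{Q}_{x,\sigma_n}(\Lambda\cap\Lambda_k)=\overline{Q}_{x,\sigma}(\Lambda\cap\Lambda_k)$ for this fixed set. Your fallback to working under the probability measures $P^h_{h\cdot\mu}$ via Lemma \ref{thm-relation M and F} is a sound way to implement the convergence, but you should note that $h(X_{\sigma_n})\to h(X_\sigma)$ relies on the (fine/quasi-) continuity of $h$ along the trajectories of the properly $h$-associated process, not merely on $h\in C(\{F_k\})$, since $X_{\sigma_n}$ need not lie in $F_k$ even when $X_\sigma$ does. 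With these two corrections your argument coincides with the paper's.
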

\begin{proof}
(i) By Lemma \ref{thm-sigmazeta}, $\Lambda\cap\{\sigma<\zeta\}\in\mathcal{F}^{h}_{\sigma_{\zeta}}$ for $\Lambda\in\mathcal{M}_{\sigma}$, therefore $\overline{Q}_{x,\sigma}$ is well defined. Let $\Lambda_k=\{\sigma<k, X_{\sigma}\in F_k, X_0\in F_k\}$, then $\overline{Q}_{x,\sigma}(\Lambda_k)< \infty,$ hence $\overline{Q}_{x,\sigma}$ is a $\sigma$-finite measure. The last assertion follows from Lemma \ref{thm-sigmazeta} and the standard theory of Markov processes (cf. e.g. \cite[Chapter I,(5.8)]{BG_markov_and_potential}).

(ii)  To check that $\overline{Q}_{x,\sigma}$ is independent of the choice of $h \in \mathcal{H},$ let $\overline{Q}_{x,\sigma}'$ be defined by (\ref{eq-exdistribution}) with $h$  replaced by another $h' \in \mathcal{H}.$ When $\sigma \in \mathcal{T}$ is a discrete stopping time, making use of  Theorem \ref{thm-independent} we can check that  $\overline{Q}'_{x,\sigma}(\cdot)$ and $\overline{Q}_{x,\sigma}(\cdot)$ are equivalent to each other.
For general $\sigma \in \mathcal{T},$ we define $\sigma_n$ in the same manner as (\ref{sigman}) above. Let $\Lambda_k=\{\sigma<k, X_{\sigma}\in F_k, X_0\in F_k\}$, then for any $\Lambda\in\mathcal{M}_{\sigma},$ we have for $x\in S,$
$$\overline{Q}_{x,\sigma}(\Lambda \cap \Lambda_k) = \lim_{n\rightarrow \infty}\overline{Q}_{x,\sigma_n}(\Lambda \cap \Lambda_k)= \lim_{n\rightarrow \infty}\overline{Q}'_{x,\sigma_n}(\Lambda \cap \Lambda_k)= \overline{Q}'_{x,\sigma}(\Lambda \cap \Lambda_k),$$
where $S$ is specified by Lemma \ref{invariantSet}.  Letting $k$ tends to infinity, we get $\overline{Q}_{x,\sigma}(\Lambda)= \overline{Q}'_{x,\sigma}(\Lambda)$  for $x\in S.$  Consequently $\overline{Q}'_{x,\sigma}(\cdot)$ and $\overline{Q}_{x,\sigma}(\cdot)$ are equivalent to each other.
\end{proof}

We are now in a position to state the strong Markov property of the distribution flows. The readers may compare the theorem below with Theorem \ref{thm-Pseudo-Markov} in Section \ref{section-distributionFlow}.

\begin{theorem}\label{thm-strong pseudo Markov}
 Let $\sigma \in \mathcal{T}$ and $ \tau \in \mathcal{T}.$ We set  $$\gamma^*=\sigma + \tau\comp\theta_{\sigma}~~\mbox{ and }~~ \gamma=\gamma^*_{\zeta}:= \gamma^*I_{\{\gamma^*<\zeta\}}+ \infty I_{\{\gamma^*\geq \zeta\}}.$$

 (i) It holds that  $\gamma \in\mathcal{T}$ and $Y\comp\theta_{\sigma} \in\mathcal{M}_{\gamma}$ for  $Y \in p\mathcal{M}_{\tau}$.

 (ii) For $\Gamma \in p\mathcal{M}_{\sigma}$ and $Y \in p\mathcal{M}_{\tau}$,  we have
\begin{equation}\label{equ-strong_pseudo_Markov01}
\overline{Q}_{x,\gamma}[\Gamma~(Y\comp\theta_{\sigma});\gamma<\zeta]=
\overline{Q}_{x,\sigma}[\Gamma~ \overline{Q}_{X_\sigma,\tau}[Y;
\tau <\zeta]; \sigma<\zeta].
\end{equation}
In particular, if $\tau=u$ is a constant, then for $\Gamma \in p\mathcal{M}_{\sigma}$ and $f\in \mathcal{B}(E)^+$,
we have
\begin{equation}
\overline{Q}_{x,\sigma +u}[\Gamma~ f(X_{\sigma +u});\sigma +u <\zeta]=\overline{Q}_{x,\sigma}[\Gamma ~ (Q_{u}f)(X_\sigma);\sigma<\zeta].
\end{equation}
\end{theorem}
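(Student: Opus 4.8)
The plan is to reduce everything to the strong Markov property of the $h$-associated process $\mathbf{M}^h$ together with the bookkeeping lemmas of Sections \ref{secaugmentfiltration} and \ref{Sect:strongMarkov}, exactly as Theorem \ref{thm-Pseudo-Markov} reduced the Markov property to that of $\mathbf{M}^h$. For part (i), I would first observe that $\sigma$ and $\tau\comp\theta_\sigma$ are both handled by the shift lemma: by Lemma \ref{lemma-strong pseudo Markov02}, $\theta_\sigma^{-1}\mathcal{M}_t\cap\{t+\sigma<\zeta\}\subset\mathcal{M}_{t+\sigma}$, from which one deduces that $\gamma^*\wedge\zeta$ is an $\{\mathcal{M}_t\}$-stopping time and $\gamma=\gamma^*_\zeta\in\mathcal{T}$ (one checks $\{\gamma\le t\}\cap\{t<\zeta\}$ lies in $\mathcal{M}_t$ by writing it in terms of $\{\sigma\le s\}$ and $\{\tau\comp\theta_\sigma\le t-s\}$ over rational $s$, using right continuity of $\{\mathcal{M}_t\}$). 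For $Y\comp\theta_\sigma\in\mathcal{M}_\gamma$ with $Y\in p\mathcal{M}_\tau$, a monotone class argument reduces to $Y=I_\Lambda$ with $\Lambda\in\mathcal{M}_\tau$; then $\Lambda\cap\{\tau\le t\}\in\mathcal{M}_t$, so $\theta_\sigma^{-1}(\Lambda\cap\{\tau\le t\})\cap\{t+\sigma<\zeta\}\in\mathcal{M}_{t+\sigma}$ by Lemma \ref{lemma-strong pseudo Markov02}, and intersecting over the decomposition of $\{\gamma\le r\}$ gives $\theta_\sigma^{-1}\Lambda\cap\{\gamma\le r\}\in\mathcal{M}_r$.

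For part (ii), the strategy is: translate the left side of (\ref{equ-strong_pseudo_Markov01}) into an expectation under $P^h_x$ via Lemma \ref{thm-sigmazeta} and Definition \ref{def-exdistribution}, apply the strong Markov property of $\mathbf{M}^h$ at the stopping time $\sigma_\zeta$, and translate back. Concretely, by Lemma \ref{thm-sigmazeta}(ii), $\sigma_\zeta$ and $\gamma$ are $(\mathcal{F}^h_t)$-stopping times and the sets involved lie in the appropriate $\mathcal{F}^h$-$\sigma$-fields, so (\ref{eq-exdistribution}) applies. Writing $\gamma^* = \sigma + \tau\comp\theta_\sigma$ and using $e^{\alpha\gamma^*} = e^{\alpha\sigma}\,(e^{\alpha\tau}\comp\theta_\sigma)$, $h(X_{\gamma^*}) = h(X_\tau)\comp\theta_\sigma$, and $\{\gamma^*<\zeta\}=\{\sigma<\zeta\}\cap\{\tau<\zeta\}\comp\theta_\sigma$ on the relevant set, one gets
\[
\overline{Q}_{x,\gamma}[\Gamma\,(Y\comp\theta_\sigma);\gamma<\zeta]
= h(x)E^h_x\Bigl[e^{\alpha\sigma}\Gamma\,I_{\{\sigma<\zeta\}}\Bigl(\tfrac{e^{\alpha\tau}Y\,I_{\{\tau<\zeta\}}}{h(X_\tau)}\Bigr)\comp\theta_\sigma\Bigr].
\]
Now apply the strong Markov property of $\mathbf{M}^h$ at $\sigma_\zeta$: since $\Gamma\cap\{\sigma<\zeta\}\in\mathcal{F}^h_{\sigma_\zeta}$ and $e^{\alpha\sigma}\Gamma I_{\{\sigma<\zeta\}}$ is $\mathcal{F}^h_{\sigma_\zeta}$-measurable, the inner conditional expectation given $\mathcal{F}^h_{\sigma_\zeta}$ equals $E^h_{X_{\sigma_\zeta}}[\,e^{\alpha\tau}Y I_{\{\tau<\zeta\}}/h(X_\tau)\,]$, which on $\{\sigma<\zeta\}$ equals $h(X_\sigma)^{-1}\overline{Q}_{X_\sigma,\tau}[Y;\tau<\zeta]$ by (\ref{eq-exdistribution}). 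Substituting back and re-invoking (\ref{eq-exdistribution}) for the outer expectation yields the right side of (\ref{equ-strong_pseudo_Markov01}). The particular case follows by taking $\tau\equiv u$ constant and $Y=f(X_u)$, since then $\overline{Q}_{X_\sigma,u}[f(X_u);u<\zeta]=(Q_uf)(X_\sigma)$ by (\ref{eq: Q_tf}).

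I expect the main obstacle to be the measurability and $\sigma$-finiteness bookkeeping rather than the probabilistic core: one must verify that every functional appearing is measurable with respect to the correct $\mathcal{M}$- or $\mathcal{F}^h$-$\sigma$-field (so that the strong Markov property of $\mathbf{M}^h$ is legitimately applicable and the $h$-transform identity (\ref{eq-exdistribution}) is valid), and that the manipulations make sense despite $\overline{Q}_{x,\sigma}$ being only $\sigma$-finite. For the latter, the safe route is to prove (\ref{equ-strong_pseudo_Markov01}) first with $\Gamma$ and $Y$ replaced by $\Gamma\,I_{\Lambda_k}$-type truncations using the exhausting sets $\Lambda_k=\{\sigma<k,\,X_\sigma\in F_k,\,X_0\in F_k\}$ from Proposition \ref{pro-exdistribution}, where all measures are finite, and then pass to the limit by monotone convergence. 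A secondary subtlety is that $\gamma$ is defined with the $\zeta$-killing built in, so one must be careful that on $\{\gamma^*\ge\zeta\}$ both sides vanish (the left because $X_\gamma=\Delta$ and $h(\Delta)=0$ by convention, the right because the integrand $I_{\{\tau<\zeta\}}\comp\theta_\sigma$ on $\{\sigma<\zeta\}$ already forces $\gamma^*<\zeta$), so restricting attention to $\{\gamma<\zeta\}=\{\gamma^*<\zeta\}$ loses nothing.
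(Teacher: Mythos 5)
Your proposal follows the paper's own argument essentially verbatim: part (i) hinges on Lemma \ref{lemma-strong pseudo Markov02} together with a decomposition over rational time points and the right continuity of $\{\mathcal{M}_t\}$, and part (ii) translates both sides into $\mathbf{M}^h$-expectations via (\ref{eq-exdistribution}), applies the strong Markov property of $\mathbf{M}^h$ at $\sigma_\zeta$, and translates back, with (\ref{eq: Q_tf}) giving the particular case. One small slip worth noting: the paper decomposes $\{\gamma<t\}$ directly as $\bigcup_{p\in\mathds{Q}}\{\tau\comp\theta_\sigma<p\}\cap\{p+\sigma<t\}\cap\{p+\sigma<\zeta\}$, so the $\zeta$-killing in the definition of $\gamma$ is absorbed by the $\{p+\sigma<\zeta\}$ factors inside the union; your detour through showing $\{\gamma\le t\}\cap\{t<\zeta\}\in\mathcal{M}_t$ would not by itself give $\{\gamma\le t\}\in\mathcal{M}_t$, so you would still need the paper's form of the decomposition (or an equivalent) to finish.
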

\begin{proof}
(i) For $\Lambda\in\mathcal{M}_{\tau}$ and  $t>0$, we have
$$\theta^{-1}_{\sigma}\Lambda\cap\{\gamma<t\}=
\bigcup\limits_{p\in\mathds{Q}}
[\theta^{-1}_{\sigma}\Lambda\cap\{\tau\comp\theta_{\sigma}<p\}
\cap\{p+\sigma<t\}\cap\{p+\sigma<\zeta\}],$$
where $\mathds{Q}$ is the collection of all the rational numbers.
Note that $\Lambda\cap\{\tau<p\}\in\mathcal{M}_p$, by Lemma \ref{lemma-strong pseudo Markov02} we get
\begin{align*}
\theta^{-1}_{\sigma}\Lambda&\cap\{\tau\comp\theta_{\sigma}<p\}\cap
\{p+\sigma<\zeta\}\\
&=\theta^{-1}_{\sigma}[\Lambda\cap\{\tau<p\}]\cap
\{p+\sigma<\zeta\}\in\mathcal{M}_{p+\sigma}.
\end{align*}
Thus,
\begin{align}\label{gamma}
\theta^{-1}_{\sigma}\Lambda&\cap\{\gamma<t\}\\
&=\bigcup\limits_{p\in\mathds{Q}}
[\theta^{-1}_{\sigma}\Lambda\cap\{\tau\comp\theta_{\sigma}<p\}
\cap\{\sigma+p<t\}\cap\{p+\sigma<\zeta\}]\in\mathcal{M}_t.\nonumber
\end{align}
Letting $\Lambda=\Omega$ in (\ref{gamma}) we get $\{\gamma<t\}\in\mathcal{M}_t$, which means $\gamma$ is an $\{\mathcal{M}_t\}$-stopping time because $\{\mathcal{M}_t\}$ is right continuous. By (\ref{gamma}) we conclude also that $\theta^{-1}_{\sigma}\Lambda\cap\{\gamma<\zeta\} \in\mathcal{M}_{\gamma}$ for any $\Lambda\in\mathcal{M}_{\tau}$, and consequently $Y\comp\theta_{\sigma}\cdot I_{(\gamma<\zeta)}\in\mathcal{M}_{\gamma}$ for any $Y \in p\mathcal{M}_{\tau}$.

(ii) Let $\Gamma \in p\mathcal{M}_{\sigma}$ and $Y \in p\mathcal{M}_{\tau}.$ By Lemma \ref{thm-sigmazeta} and the strong Markov property of $((X_t)_{t\geq0},(\mathcal{F}^h_t)_{t\geq0}, P^h_x)$ (cf. \cite[Chapter I.(8.6)]{BG_markov_and_potential}),  we get
\begin{align*}
\overline{Q}_{x,\gamma}[\Gamma~ &(Y\comp\theta_{\sigma});\gamma<\zeta]
=h(x)E^h_x[\frac{e^{\alpha(\sigma+\tau\comp\theta_{\sigma})}
Y\comp\theta_{\sigma}\Gamma}{h(X_{\sigma+\tau\comp\theta_{\sigma}})};
\sigma+\tau\comp\theta_{\sigma}<\zeta]\\
&=h(x)E^h_x[\frac{e^{\alpha(\sigma_{\zeta}+\tau_{\zeta}
\comp\theta_{\sigma_{\zeta}})}Y\comp\theta_{\sigma_{\zeta}}\Gamma}
{h(X_{\sigma_{\zeta}+\tau_{\zeta}\comp\theta_{\sigma_{\zeta}}})};
\sigma_{\zeta}<\zeta,\tau_{\zeta}\comp\theta_{\sigma_{\zeta}}<\zeta
\comp\theta_{\sigma_{\zeta}}]\\
&=h(x)E^h_x[e^{\alpha\sigma_{\zeta}}\Gamma I_{(\sigma_{\zeta}<\zeta)}
E^h_x(\frac{e^{\alpha\tau_{\zeta}}YI_{(\tau_{\zeta}<\zeta)}}
{h(X_{\tau_{\zeta}})}\comp\theta_{\sigma_{\zeta}}|\mathcal{F}^h
_{\sigma_{\zeta}})]\\
&=h(x)E^h_x[e^{\alpha\sigma_{\zeta}}\Gamma I_{(\sigma_{\zeta}<\zeta)}
E^h_{X_{\sigma_{\zeta}}}(\frac{e^{\alpha\tau_{\zeta}}Y}
{h(X_{\tau_{\zeta}})};\tau_{\zeta}<\zeta)]\\
&=h(x)E^h_x[\frac{e^{\alpha\sigma}\Gamma I_{(\sigma<\zeta)}}
{h(X_{\sigma})}h(X_{\sigma})E^h_{X_{\sigma}}(\frac{e^{\alpha\tau}Y}
{h(X_{\tau})};\tau<\zeta)]\\
&=\overline{Q}_{x,\sigma}[\Gamma~ \overline{Q}_{X_\sigma,\tau}[Y;
\tau <\zeta]; \sigma<\zeta].
\end{align*}
The above last equality made use of the last assertion of Proposition \ref{pro-exdistribution} (i) and Proposition \ref{pro-unversal}.
\end{proof}

\section{$\mathcal{O}$-measurable positive continuous additive functionals}\label{Sec-PCAF}

\subsection{Preliminaries and Definition}\label{Sec-PCAF1}
  Let $(\mathcal{E},D(\mathcal{E}))$ be a quasi-regular positivity preserving coercive form on $L^2(E;m).$
Following \cite{FOT11} (see also \cite{Oshima13}), a positive measure $\mu$ on $(E,\mathcal{B}(E))$ will be called smooth w.r.t. $(\mathcal{E},D(\mathcal{E}))$, and be denoted by $\mu\in\mathcal{S}$,
if $\mu(N)=0$ for each $\mathcal{E}$-exceptional set $N\in\mathcal{B}(E)$ and there exists an $\mathcal{E}$-nest $(F_k)_{k\geq 1}$ of compact subsets of $E$ such that
$\mu(F_k)<\infty$ for all $k\in\mathds{N}$.  A positive Radon measure $\mu$ on $(E,\mathcal{B}(E))$ is said to be of finite energy integral  w.r.t. $(\mathcal{E},D(\mathcal{E}))$, denoted by $\mu\in\mathcal{S}_0$, if $\mu\in \mathcal{S}$ and there exists $C>0$ such that
\begin{center}
$\int_E|\tilde{v}(x)|\mu(dx)\leq C\mathcal{E}_1(v,v)^{\frac{1}{2}}$ for all $v\in D(\mathcal{E})$.
\end{center}
Let $(\mathcal{E}^h,D(\mathcal{E}^h))$ be an $h$-transform of $(\mathcal{E},D(\mathcal{E}))$ with some $h\in \mathcal{H}.$ Then one can easily check that $\mu$ is a smooth measure w.r.t. $(\mathcal{E},D(\mathcal{E}))$  if and only if it is a  smooth measure w.r.t. the semi-Dirichlet form $(\mathcal{E}^h, D(\mathcal{E}^h))$. We shall denote by $\mathcal{S}^h_0$ all the measures of finite energy integral w.r.t. $(\mathcal{E}^h, D(\mathcal{E}^h))$.

For $\mu\in S_0$, applying a theorem of G. Stampacchia \cite{Stampacchia64} (cf. \cite[I.Theorem 2.6]{MR92}) we can show that there exists a unique $U_{\alpha}\mu\in D(\mathcal{E})$ and an unique $\hat{U}_{\alpha}\mu\in D(\mathcal{E})$ such that
\begin{center}
$\mathcal{E}_{\alpha}(U_{\alpha}\mu,v)=\int_E\tilde{v}(x)\mu(dx)=\mathcal{E}_{\alpha}(v,\hat{U}_{\alpha}\mu)$ for all $v\in D(\mathcal{E})$.
\end{center}
We call $U_{\alpha}\mu$ (rep. $\hat{U}_{\alpha}\mu$) an $\alpha$-potiential (resp. $\alpha$-copotential) of $\mu\in S_0$ w.r.t. $(\mathcal{E},D(\mathcal{E}))$. For notational convenience, we shall denote by $U^h_{\alpha}\mu$ (resp. $\hat{U}^h_{\alpha}\mu$) the $\alpha$-potiential (resp. $\alpha$-copotential) of $\mu\in S_0^{h}$ w.r.t. $(\mathcal{E}^h,D(\mathcal{E}^h))$.
The following lemma can be checked directly and we omit their proofs.
\begin{lemma}\label{lemma-S_0 and S^h_0}
(i) If $\mu\in\mathcal{S}_0$, then $h\cdot \mu \in\mathcal{S}^h_0$. If $\nu\in\mathcal{S}^h_0$, then $(h^{-1})\cdot \nu\in\mathcal{S}_0$.

(ii) \label{lemma-potential measure between h trans}
For any $\mu\in\mathcal{S}_0$, $\beta\geq \alpha$, $U_{\beta}\mu=hU^h_{\beta-\alpha}(h\cdot \mu)$, $m$-a.e..

(iii)\label{SS0compact}
The following  two assertions are equivalent to each other.

(a) $\mu\in\mathcal{S}.$

(b) There exists an $\mathcal{E}$-nest $(K_n)_{n\geq1}$ consisting of compact sets such that $I_{K_n}\cdot\mu\in\mathcal{S}_{0}$ for each $n\in\mathds{N}$.
\end{lemma}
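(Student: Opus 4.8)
The statement to prove is Lemma \ref{lemma-S_0 and S^h_0}, which has three parts. The plan is as follows.

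\medskip

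\textbf{Part (i).} The plan is to use the defining inequality for $\mathcal{S}_0$ together with the relation between $\mathcal{E}_1$ and $\mathcal{E}^h_1$ under $h$-transform. First I would recall that for $v\in D(\mathcal{E})$ the function $v/h$ lies in $D(\mathcal{E}^h)$, that the quasi-continuous versions satisfy $\widetilde{v/h}=\tilde v/\tilde h$ quasi-everywhere, and that a set is $\mathcal{E}$-exceptional iff it is $\mathcal{E}^h$-exceptional (cited from \cite[Proposition 4.2]{MR95}), so in particular $h\cdot\mu$ charges no $\mathcal{E}^h$-exceptional set and the nest condition transfers. Then, writing $w=v/h\in D(\mathcal{E}^h)$, I would estimate $\int_E|\tilde w|\,d(h\cdot\mu)=\int_E|\tilde v|\,d\mu\le C\,\mathcal{E}_1(v,v)^{1/2}$ and bound $\mathcal{E}_1(v,v)^{1/2}=\mathcal{E}_1(hw,hw)^{1/2}$ by a constant times $(\mathcal{E}^h_{\alpha}(w,w))^{1/2}=(\mathcal{E}^h_1)^{1/2}$ wait — more carefully, by the norm equivalence between $\mathcal{E}_1$ on $D(\mathcal{E})$ pulled back and $\mathcal{E}^h_1$ on $D(\mathcal{E}^h)$ (both are coercive closed forms with equivalent norms after the shift by $\alpha$), obtaining $\int_E|\tilde w|\,d(h\cdot\mu)\le C'\mathcal{E}^h_1(w,w)^{1/2}$, which is exactly $h\cdot\mu\in\mathcal{S}^h_0$. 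The converse, $\nu\in\mathcal{S}^h_0\Rightarrow h^{-1}\nu\in\mathcal{S}_0$, is entirely symmetric since the $h$-transform is invertible (inverse given by $h^{-1}$-transform on $(\mathcal{E}^h,D(\mathcal{E}^h))$, using that $h^{-1}$ is, after renormalization, the relevant excessive function for the dual picture). I would also need to check the Radon property is preserved, which follows because $h$ is strictly positive and $\mathcal{E}$-quasi-continuous, hence locally bounded away from $0$ and $\infty$ on the compact sets of a suitable nest.

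\medskip

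\textbf{Part (ii).} The plan is to characterize both $U_\beta\mu$ and $hU^h_{\beta-\alpha}(h\cdot\mu)$ by the same variational equation. By definition $U_\beta\mu\in D(\mathcal{E})$ solves $\mathcal{E}_\beta(U_\beta\mu,v)=\int_E\tilde v\,d\mu$ for all $v\in D(\mathcal{E})$. Setting $g:=hU^h_{\beta-\alpha}(h\cdot\mu)$, I would show $g\in D(\mathcal{E})$ (since $U^h_{\beta-\alpha}(h\cdot\mu)\in D(\mathcal{E}^h)$ and multiplication by $h$ maps $D(\mathcal{E}^h)$ into $D(\mathcal{E})$) and verify $\mathcal{E}_\beta(g,v)=\int_E\tilde v\,d\mu$ for all $v\in D(\mathcal{E})$. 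Writing $v=hw$ with $w\in D(\mathcal{E}^h)$, the key computation is $\mathcal{E}_\beta(hU^h_{\beta-\alpha}(h\cdot\mu),hw)=\mathcal{E}^h_{\beta-\alpha}(U^h_{\beta-\alpha}(h\cdot\mu),w)$, which is the basic intertwining identity $\mathcal{E}_\gamma(hu',hw)=\mathcal{E}^h_{\gamma-\alpha}(u',w)$ for the $h$-transform (with $\gamma=\beta$; this identity with the $\alpha$-shift is exactly the content of the definition of $\mathcal{E}^h$ in \cite[Definition 3.1]{MR95}, noting $\mathcal{E}^h_\alpha$ is associated with $T^h_t=h^{-1}e^{-\alpha t}T_t(h\cdot)$). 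Then $\mathcal{E}^h_{\beta-\alpha}(U^h_{\beta-\alpha}(h\cdot\mu),w)=\int_E\tilde w\,d(h\cdot\mu)=\int_E\tilde w\,\tilde h\,d\mu=\int_E\widetilde{hw}\,d\mu=\int_E\tilde v\,d\mu$, using $\widetilde{hw}=\tilde h\tilde w$ q.e. and that $\mu$ does not charge exceptional sets. By uniqueness of the solution to the variational problem (Stampacchia/Lax--Milgram), $g=U_\beta\mu$ in $D(\mathcal{E})$, hence $m$-a.e.

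\medskip

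\textbf{Part (iii).} The plan is the standard equivalence between smoothness and being locally (along a nest) of finite energy. For (b)$\Rightarrow$(a): if $(K_n)$ is an $\mathcal{E}$-nest with $I_{K_n}\cdot\mu\in\mathcal{S}_0$, then each $I_{K_n}\cdot\mu$ charges no exceptional set and $\mu=\sup_n I_{K_n}\cdot\mu$ (since $K_n$ increases to a set whose complement is exceptional), so $\mu$ charges no exceptional set; moreover each $I_{K_n}\cdot\mu\in\mathcal{S}_0$ is in particular finite on the compacts of some nest, and by a diagonal/intersection argument one produces a single $\mathcal{E}$-nest of compacts on which $\mu$ is finite. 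For (a)$\Rightarrow$(b): given $\mu\in\mathcal{S}$ with defining nest $(F_k)$, I would use the quasi-regular structure to find, inside each $F_k$, compact sets and cut-off functions so that $I_{F_k}\cdot\mu$ (which is finite) can be further localized to have finite energy integral — concretely, pass to the semi-Dirichlet form $(\mathcal{E}^h,D(\mathcal{E}^h))$ via parts (i), where the corresponding statement $\mu\in\mathcal{S}^h\Leftrightarrow$ existence of a nest $(K_n)$ with $I_{K_n}\cdot\mu\in\mathcal{S}^h_0$ is exactly \cite[Theorem 2.2.4]{Oshima13} (or the analogue in \cite{FOT11}), then transfer back using part (i) and $\mathcal{S}=h^{-1}\cdot\mathcal{S}^h$, $\mathcal{S}_0=h^{-1}\cdot\mathcal{S}^h_0$. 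Since the nests correspond and $I_{K_n}\cdot\mu\in\mathcal{S}_0\Leftrightarrow I_{K_n}\cdot(h\cdot\mu)\in\mathcal{S}^h_0$, this completes the equivalence.

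\medskip

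\textbf{Main obstacle.} The genuinely substantive point is Part (ii): pinning down the exact intertwining identity $\mathcal{E}_\beta(hu',hw)=\mathcal{E}^h_{\beta-\alpha}(u',w)$ with the correct placement of the $\alpha$-shift, and making sure the identity holds on all of $D(\mathcal{E})$ (not merely on a form core) so that uniqueness of the potential can be invoked — this requires care because the $h$-transform of \cite[Definition 3.1]{MR95} is defined at the level of the (non-symmetric) form and the bookkeeping of the $e^{-\alpha t}$ factor in \eqref{h^2} must be tracked consistently. Parts (i) and (iii) are then comparatively routine, modulo the standard quasi-regular-form machinery (nests, quasi-continuity of $hw$, localization), and indeed the excerpt explicitly says ``can be checked directly and we omit their proofs,'' so the expectation is that all three are mechanical once the intertwining identity and the nest/exceptional-set correspondence from \cite[Proposition 4.2]{MR95} are in hand.
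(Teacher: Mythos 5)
The paper gives no proof of Lemma 6.1 (it states ``The following lemma can be checked directly and we omit their proofs''), so there is nothing to compare line by line; your plan is the natural one and its substance is correct. Parts (i) and (iii) are indeed routine once the $\mathcal{E}$-nest/$\mathcal{E}^h$-nest and $\mathcal{E}$-exceptional/$\mathcal{E}^h$-exceptional correspondence from \cite[Proposition 4.2]{MR95} is in hand: the energy estimate transfers by the norm equivalence of $\mathcal{E}_1(hw,hw)^{1/2}$ and the $\mathcal{E}^h$-norm of $w$, and (a)$\Leftrightarrow$(b) in (iii) is obtained by pushing the corresponding semi-Dirichlet-form statement through (i); this is what the authors clearly have in mind.

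For part (ii) you correctly identify the single genuine subtlety: the placement of the $\alpha$-shift. Let me make the bookkeeping completely explicit, because your statement of the ``intertwining identity'' is not quite the one the paper's own notation gives, even though your final chain of equalities lands on the right answer. Since $T^h_t=h^{-1}e^{-\alpha t}T_t(h\cdot)$ and the paper declares $T^h_t$ associated with $(\mathcal{E}^h_\alpha,D(\mathcal{E}^h))$ with $\mathcal{E}^h_\alpha(u,v)=\mathcal{E}^h(u,v)+\alpha(u,v)_{h^2 m}$, a direct computation of the form of $T^h_t$ gives $\mathcal{E}^h_\alpha(u,v)=\mathcal{E}_\alpha(hu,hv)$, hence $\mathcal{E}^h(u,v)=\mathcal{E}(hu,hv)$ and, for \emph{every} $\gamma\ge 0$, $\mathcal{E}_\gamma(hu',hw)=\mathcal{E}^h_\gamma(u',w)$ with \emph{no} $\alpha$-shift. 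With that convention the $\alpha$ must come from the meaning of $U^h_\gamma$: in the Oshima/semi-Dirichlet framework (which is what is actually used in the proof of Lemma 6.5, via the resolvent $R^h_{\beta-\alpha}$ of $T^h_t$), the $\gamma$-potential of $\nu$ for the semi-Dirichlet form $\mathcal{E}^h_\alpha$ solves $(\mathcal{E}^h_\alpha)_\gamma(\cdot,w)=\mathcal{E}^h_{\alpha+\gamma}(\cdot,w)=\int\tilde w\,d\nu$. Thus $g:=hU^h_{\beta-\alpha}(h\mu)$ satisfies $\mathcal{E}_\beta(g,hw)=\mathcal{E}^h_\beta(U^h_{\beta-\alpha}(h\mu),w)=\int\tilde w\,d(h\mu)=\int\tilde v\,d\mu$, giving $g=U_\beta\mu$. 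You instead absorb the $\alpha$ into $\mathcal{E}^h$ (effectively taking $\mathcal{E}^h(u,v)=\mathcal{E}_\alpha(hu,hv)$, so $\mathcal{E}_\gamma(hu',hw)=\mathcal{E}^h_{\gamma-\alpha}(u',w)$) and use the naive $\mathcal{E}^h_\delta(U^h_\delta\nu,w)=\int\tilde w\,d\nu$; these two $\alpha$'s cancel, so the conclusion is the same, but your intertwining identity as written contradicts the paper's explicit relation $\mathcal{E}^h_\alpha(u,v)=\mathcal{E}^h(u,v)+\alpha(u,v)_{h^2m}$. This is a notational infelicity (arguably present in the paper's own wording ``$\gamma$-potential w.r.t.\ $(\mathcal{E}^h,D(\mathcal{E}^h))$''), not a mathematical error, but in a written-up version you should pick one convention, state it, and verify it against the resolvent formula $R^h_{\beta-\alpha}(g/h)=h^{-1}R_\beta g$ that underpins Lemma 6.5.
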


In what follows we use $\mathds{R}_+$ to denote $[0, \infty)$ and use $\overline{\mathds{R}}_+$ to denote $[0, \infty].$
Let $\mathcal{O}$ be the optional $\sigma$-field related to the filtration $(\mathcal{M}_t)_{t\geq 0}$. That is, $\mathcal{O}$ is the $\sigma$-field on $[[0, \infty[[:=\mathds{R}_+\times \Omega$ generated by all the $\{\mathcal{M}_t\}$-adapted cadlag processes. It is known that (cf. \cite[Theorem 3.17]{HWY92})
\begin{equation}\label{defoptionalfieldall}
\mathcal{O}:=\sigma\{[[T, \infty[[~|~T\in \mathcal{T} \}.
\end{equation}
Here and henceforth, $[[T, \infty[[:= \{(t,\omega)~|~ T(\omega)\leq t < \infty\},$
$\mathcal{T}$ is the collection of all the $\{\mathcal{M}_t\}$-stopping times.
An $\mathcal{O}$-measurable process is called an optional process.

For $\mu \in \mathcal{P}(E_\Delta),$ we define a $\sigma$-finite measure $\mathbb{Q}_{\mu}$ on $\mathcal{O}$ as follows.
\begin{equation}\label{def-optional measure-mu}
\mathbb{Q}_{\mu}(H):=\int^{+\infty}_0\overline{Q}_{\mu,t}(I_H(t,\,\cdot\,))dt, ~~\forall~H\in\mathcal{O}.
\end{equation}
In particular, for  $\mu = \delta_x$ we write
\begin{equation}\label{def-optional measure-x}
\mathbb{Q}_{x}(H):=\int^{+\infty}_0\overline{Q}_{x,t}(I_H(t,\,\cdot\,))dt, ~~\forall~H\in\mathcal{O}.
\end{equation}

\begin{definition}\label{def-PCAF2}
 (i) An $\overline{\mathds{R}}_+$-valued optional process $A:=(A_t)_{t\geq 0}$ is called an $\mathcal{O}$-measurable positive continuous additive functional ($\mathcal{O}$-PCAF in abbreviation) if there exists a  defining set $\Gamma\in\mathcal{O}$ such that:

 (a) $I_{\Gamma}(t,\omega)$ is deceasing and right continuous in $t$ for fixed $\omega,$ and $I_{\Gamma}(t+s,\omega)=1$ implies $I_{\Gamma}(s,\theta_t\omega)=1;$

 (b) $\mathbb{Q}_{\nu}(\Gamma^c)=0$ for all $\nu \in \mathcal{S}_0,$ where $\Gamma^c:= [[0, \infty)) \setminus \Gamma;$

 (c)  Let $\tau_{\Gamma}(\omega):=\inf\{t\geq0~|~ (t, \omega) \notin \Gamma\}$ and $\Lambda_{\Gamma}:=\{\omega~|~\tau_{\Gamma}(\omega)\geq \zeta(\omega)\}, $ then $\Lambda_{\Gamma}=\{\omega~|~\tau_{\Gamma}(\omega)=\infty\}.$\\
 Furthermore, the restriction of $A$ on $\Gamma$, or equivalently, the restriction of $A$ on $\{\tau_{\Gamma}>0\}$, possesses the following properties:

 (d) $A_t$ is continuous for $0\leq t < \tau_{\Gamma}, ~A_0=0,~ A_t<\infty$ for $t<\tau_{\Gamma}\wedge \zeta,$ and $A_t=(A_{\zeta})_{-}$ for $t\geq \zeta;$

 (e) $A_{t+s}(\omega)=A_t(\omega)+A_s(\theta_t\omega)$ for $t+s < \tau_{\Gamma}(\omega).$

(ii) Two $\mathcal{O}$-PCAF $A$ and $A'$ are said to be $\mathcal{O}$-equivalent if they share a common defining set $\Gamma$ and their restriction on $\Gamma$ are identical.
\end{definition}
\begin{proposition}\label{classic}
Let $A$ be an $\mathcal{O}$-PCAF and $\Lambda_{\Gamma}$ be specified as in (c) of the above Definition \ref{def-PCAF2}. Then for any $h\in \mathcal{H}$, the restriction of $A$ on  $\Lambda_{\Gamma}$ is a PCAF of $\mathbf{M}^h$ in the classical sense defined in  \cite[Section 5.1]{FOT11} or  \cite[Section 4.1]{Oshima13}, with defining set $\Lambda_{\Gamma}$ and some exceptional set $N$.
\end{proposition}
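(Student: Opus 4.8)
The plan is to translate the $\mathcal{O}$-PCAF $A$ into an object living on the single-measure world of $\mathbf{M}^h$ by restricting to the set $\Lambda_{\Gamma}$, and then to check the classical defining conditions for a PCAF of $\mathbf{M}^h$ one by one, using that on $\Lambda_{\Gamma}$ we have $\tau_{\Gamma}=\infty$ (condition (c)). First I would fix $h\in\mathcal{H}$ and recall that $\Lambda_{\Gamma}=\{\tau_{\Gamma}=\infty\}\in\mathcal{M}_{\infty}$, so by Lemma \ref{thm-relation M and F} (i) its trace on each $\{t<\zeta\}$ lies in $\mathcal{F}^{h,\mu}_t$; combined with the fact that $A$ is $\mathcal{O}$-measurable and $\mathcal{O}$ is generated by $\{\mathcal{M}_t\}$-adapted cadlag processes, the restricted process $(A_t)_{t\geq 0}$ is adapted to the $P^h_x$-augmented filtration, for $P^h_x$-a.e.\ starting point outside a suitable exceptional set $N$. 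The additivity and continuity properties (d), (e) of Definition \ref{def-PCAF2} are stated pointwise on $\Gamma$ (equivalently on $\{\tau_{\Gamma}>0\}$), and on $\Lambda_{\Gamma}$ where $\tau_{\Gamma}=\infty$ they become exactly the classical requirements $A_0=0$, $t\mapsto A_t$ continuous and finite on $[0,\zeta)$, $A_t=A_{\zeta-}$ for $t\geq\zeta$, and $A_{t+s}=A_t+A_s\circ\theta_t$ for all $t,s\geq 0$ on $\Lambda_{\Gamma}$.

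The genuine work is to produce the exceptional set $N$ and verify that $\Lambda_{\Gamma}$ is, modulo $N$, a defining set in the classical sense, i.e.\ that $P^h_x(\Lambda_{\Gamma})=1$ for q.e.\ $x$. Here I would use condition (b): $\mathbb{Q}_{\nu}(\Gamma^c)=0$ for all $\nu\in\mathcal{S}_0$. Unwinding the definition (\ref{def-optional measure-mu}) of $\mathbb{Q}_{\nu}$ and the relation (\ref{eq: relation Mmu and F}) between $\overline{Q}_{\nu,t}$ and $P^h_{h\cdot\nu}$, one gets $\int_0^\infty E^h_{h\cdot\nu}[e^{\alpha t}h(X_t)^{-1} I_{\Gamma^c}(t,\cdot);t<\zeta]\,dt=0$. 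By Lemma \ref{lemma-S_0 and S^h_0} (i), as $\nu$ ranges over $\mathcal{S}_0$ the measures $h\cdot\nu$ range over a rich enough subfamily of $\mathcal{S}_0^h$ to separate q.e.\ points (using that $\mathcal{S}_0^h$-measures, e.g.\ those of the form $g\cdot m$ with $g$ bounded of compact support times a potential, charge every non-exceptional set); hence $I_{\Gamma^c}(t,\omega)=0$ for $dt\times P^h_x$-a.e.\ $(t,\omega)$ for q.e.\ $x$. Since $t\mapsto I_{\Gamma^c}(t,\omega)$ is increasing and right continuous (the complement of the decreasing right-continuous $I_\Gamma$), vanishing a.e.\ in $t$ forces $\tau_{\Gamma^c}^{\,c}$, i.e.\ $\tau_{\Gamma}\geq \zeta$, $P^h_x$-a.s.\ for q.e.\ $x$; then condition (c) upgrades this to $\tau_{\Gamma}=\infty$ $P^h_x$-a.s., i.e.\ $P^h_x(\Lambda_{\Gamma})=1$ for q.e.\ $x$. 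Taking $N$ to be the corresponding exceptional set completes the identification of $\Lambda_{\Gamma}$ as a defining set.

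The remaining items are routine: the strong additivity $A_{t+s}=A_t+A_s\circ\theta_t$ required classically holds on $\Lambda_{\Gamma}$ because there $\tau_{\Gamma}=\infty$, so condition (e) applies for all $t,s$; continuity, finiteness on $[0,\zeta)$ and the behaviour $A_t=A_{\zeta-}$ for $t\geq\zeta$ transfer verbatim from (d). One must also check the measurability demanded of a PCAF, namely that $A$ is adapted to $(\mathcal{F}^{h,\mu}_t)$ for each $\mu$; this follows since $A$ is optional with respect to $\{\mathcal{M}_t\}$ and, by Lemma \ref{thm-relation M and F}, $\mathcal{M}_t\cap\{t<\zeta\}\subset\mathcal{F}^{h,\mu}_t$, while on $\Lambda_{\Gamma}\subset\{t<\zeta\}$ for all $t<\zeta$ this suffices after the usual completion argument.

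The step I expect to be the main obstacle is the middle one: converting the analytic vanishing statement $\mathbb{Q}_{\nu}(\Gamma^c)=0$ for $\nu\in\mathcal{S}_0$ into the probabilistic statement $P^h_x(\tau_\Gamma\geq\zeta)=1$ for q.e.\ $x$. This requires (i) identifying, via (\ref{eq: relation Mmu and F}) and Lemma \ref{lemma-S_0 and S^h_0}, the family $\{h\cdot\nu:\nu\in\mathcal{S}_0\}$ inside $\mathcal{S}_0^h$ and checking it is large enough that the union of the null sets it produces is $\mathcal{E}^h$-exceptional (equivalently $\mathcal{E}$-exceptional, by \cite[Proposition 4.2]{MR95}); and (ii) handling the Radon-ness/$\sigma$-finiteness bookkeeping so that the Fubini interchange in (\ref{def-optional measure-mu}) and the passage ``a.e.\ in $t$'' $\Rightarrow$ ``$\tau_\Gamma\geq\zeta$'' are justified using the monotonicity and right continuity of $I_\Gamma$ in $t$ from Definition \ref{def-PCAF2}(a). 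Everything else is a direct unwinding of the definitions.
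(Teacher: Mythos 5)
Your proposal follows the same route as the paper's proof: use property (b) of the defining set $\Gamma$ to transfer the vanishing $\mathbb{Q}_{\nu}(\Gamma^c)=0$ for all $\nu\in\mathcal{S}_0$ into $P^h_x(\Lambda_{\Gamma})=1$ for q.e.\ $x$, and let properties (d), (e) of Definition \ref{def-PCAF2} supply the classical continuity and additivity of the restriction. Two points of comparison. First, your concern that $\{h\cdot\nu:\nu\in\mathcal{S}_0\}$ might not be rich enough inside $\mathcal{S}_0^h$ is unnecessary: Lemma \ref{lemma-S_0 and S^h_0}(i) shows $\nu\mapsto h\cdot\nu$ is a bijection between $\mathcal{S}_0$ and $\mathcal{S}_0^h$, so the transfer from the analytic vanishing to the q.e.\ statement is immediate, and the paper states it in one line: there is an $\mathcal{E}$-exceptional $N$ with $\mathbb{Q}_x(\Gamma^c)=0$ for $x\notin N$, from which (since $h(x),e^{\alpha t},1/h(X_t)>0$) one gets $P^h_x(\tau_{\Gamma}\leq t<\zeta)=0$ for a.a.\ $t$ and hence, exactly as you argue via countable dense $t$, $P^h_x((\Lambda_{\Gamma})^c)=0$. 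Second, for the measurability of $\Lambda_{\Gamma}$ the paper invokes Lemma \ref{thm-sigmazeta} directly to get $(\Lambda_{\Gamma})^c=\{\tau_{\Gamma}<\zeta\}\in\mathcal{F}^h_{\infty}$, which is cleaner than your route through Lemma \ref{thm-relation M and F} and completion.

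The one genuine omission in your proposal is the verification that $\Lambda_{\Gamma}$ is shift-invariant, i.e.\ $\theta_t\Lambda_{\Gamma}\subset\Lambda_{\Gamma}$ for all $t\geq 0$, which is one of the requirements for a classical defining set (cf.\ the PCAF definition in \cite[Section 5.1]{FOT11} and \cite[Section 4.1]{Oshima13}). You note that (e) ``applies for all $t,s$'' on $\Lambda_{\Gamma}$, which is true, but the classical definition also demands that the defining set itself be stable under $\theta_t$; otherwise the value $A_s(\theta_t\omega)$ in the additivity identity is not itself a value of the restricted functional at a good point. The paper derives this from property (a) of Definition \ref{def-PCAF2}: if $\omega\in\Lambda_{\Gamma}$ then $\tau_{\Gamma}(\omega)=\infty$, so $I_{\Gamma}(t+s,\omega)=1$ for all $s\geq 0$, whence by (a) $I_{\Gamma}(s,\theta_t\omega)=1$ for all $s\geq 0$, i.e.\ $\tau_{\Gamma}(\theta_t\omega)=\infty$ and $\theta_t\omega\in\Lambda_{\Gamma}$. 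Adding this step makes your argument complete and essentially identical to the paper's.
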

\begin{proof}
Let $h \in \mathcal{H},$ by Lemma \ref{thm-sigmazeta}  we have $(\Lambda_{\Gamma})^c= \{\tau_{\Gamma} < \zeta\}\in \mathcal{F}^h_{(\tau_{\Gamma})_{\zeta}}\subset \mathcal{F}^h_{\infty}.$ By (b) there exists an $\mathcal{E}$-exceptional set $N$ such that for  $x\in E\setminus N,$
\begin{equation}
\mathbb{Q}_{x}(\Gamma^c):=\int^{\infty}_0h(x)E^h_x[\frac{e^{\alpha t}I_{\Gamma^c}(t,\cdot)}{h(X_t)};t<\zeta]dt=0,
\end{equation}
which implies $E^h_x[I_{\Gamma^c}(t,\cdot);t<\zeta]=E^h_x[I_{\{ \tau_{\Gamma}\leq t\}};t<\zeta]=0$ for almost all $t\in \mathds{R}^+$ and hence $P^h_x((\Lambda_{\Gamma})^c)=E^h_x[I_{\{ \tau_{\Gamma}<\zeta\}}]=0$.
Since $\Lambda_{\Gamma}=\{\omega~|~\tau_{\Gamma}(\omega)=\infty\},$ hence by (a) we have
$\theta_t\Lambda_{\Gamma}\subset \Lambda_{\Gamma}$ for $t\geq 0.$ Therefore, by (d) and (e) $A$ is a PCAF of $\mathbf{M}^h$ with defining set $\Lambda_{\Gamma}$ and exceptional set $N.$
\end{proof}
\begin{remark}
 Conversely, Let $A^{h}$ be a PCAF of $\mathbf{M}^h$ in the classical sense, then applying Theorem \ref{thm-revuz cor} below, we can construct an $\mathcal{O}$-PCAF
$A$ such that the restriction of $A$ on $\Lambda_{\Gamma}$ as a classical PCAF
is equivalent to $A^{h}$ in the classical sense.
\end{remark}

\subsection{Revuz correspondence}\label{Sec-PCAF2}

In this subsection we fix an $h\in \mathcal{H}.$  Suppose that $A^h$ is a PCAF of $\mathbf{M}^h$ and $\mu^h$ is a smooth measure w.r.t. $(\mathcal{E}^h,D(\mathcal{E}^h)).$  Then by the theory of Dirichlet forms,  $A^h$ and $\mu^h$ are said to be Revuz corresponding to each other, and  $\mu^h$ is called the Revuz measure of $A^h,$ if for any  $\gamma$-coexcessive $(\gamma > 0)$ function $g\in D(\mathcal{E}^h)$ and any bounded $f\in p\mathcal{B}(E),$ it holds that
\begin{equation}\label{eq: classic revuz cor}
\lim\limits_{\beta\to+\infty}\beta(g,~E^h_{\cdot}[\int^{+\infty}_0e^{-(\beta + \gamma) t}
f(X_t)I_{(t<\zeta)}dA^h_t])_{h^2\cdot m}=\int_Efg\mu^h(dx).
\end{equation}
We refer to \cite[Section 5.1]{FOT11} and \cite[Section 4.1]{Oshima13} for the detail discussion of Revuz correspondence. The condition stated above is slightly different but equivalent to the condition stated in \cite[Theorem 4.1.4]{Oshima13} (cf. \cite[Theorem 5.1.3]{FOT11} ).

In this subsection we shall prove the following theorem.
\begin{theorem}\label{thm-revuz cor}
(i) For any $\mathcal{O}$-PCAF $A$, there exists a smooth measure $\mu=\mu_A ,$  such that for any  $\gamma$-coexcessive ($\gamma>\alpha$) function $g\in D(\mathcal{E})$  and any bounded function $f\in p\mathcal{B}(E),$ it holds that
\begin{equation}\label{eq: revuz cor}
\lim\limits_{\beta\to+\infty}\beta(g,U^{\beta+\gamma}_Af)_m=
\int_Efg\mu(dx),
\end{equation}
here and henceforth,
\begin{equation}\label{UbetaA}
U^{\beta}_Af(x):=h(x)E^h_x[\int^{+\infty}_0e^{-(\beta-\alpha)t}
\frac{f}{h}(X_t)I_{(t<\zeta)}dA_t].
\end{equation}
Moreover, if $A$ and $B$ are $\mathcal{O}$-equivalent $\mathcal{O}$-PCAFs, then  $\mu_A$ and $\mu_B$ are identical.

(ii) Conversely, for any $\mu\in\mathcal{S}$, there exists an unique (in $\mathcal{O}$-equivalent sense)  $\mathcal{O}$-PCAF $A$, such that assertion  (\ref{eq: revuz cor}) holds.
\end{theorem}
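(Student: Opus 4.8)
The plan is to transport everything, through the $h$-transform, to the classical Revuz correspondence for the semi-Dirichlet form $(\mathcal{E}^h,D(\mathcal{E}^h))$ and the process $\mathbf{M}^h$ (\cite[Theorem 4.1.4]{Oshima13}, cf. \cite[Theorem 5.1.3]{FOT11}). The bridge in one direction is Proposition \ref{classic}: for an $\mathcal{O}$-PCAF $A$, the restriction $A^h:=A|_{\Lambda_\Gamma}$ is a classical PCAF of $\mathbf{M}^h$ with defining set $\Lambda_\Gamma$ and some $\mathcal{E}$-exceptional set $N$. Three ``dictionary'' facts will be used throughout. First, by Remark \ref{lemma-coexcessive by h trans}, $g\in D(\mathcal{E})$ is $\gamma$-coexcessive for $(\mathcal{E},D(\mathcal{E}))$ with $\gamma>\alpha$ iff $g/h\in D(\mathcal{E}^h)$ is $(\gamma-\alpha)$-coexcessive for $(\mathcal{E}^h,D(\mathcal{E}^h))$; since a $\delta$-coexcessive function is a fortiori $\delta'$-coexcessive for $\delta'\geq\delta$, $g/h$ is then also $(\gamma-\alpha)$-coexcessive with respect to the transition semigroup of $\mathbf{M}^h$. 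Second, by the discussion preceding Lemma \ref{lemma-S_0 and S^h_0}, a positive measure is smooth for $(\mathcal{E},D(\mathcal{E}))$ iff it is smooth for $(\mathcal{E}^h,D(\mathcal{E}^h))$, and since $h$ and $h^{-1}$ are bounded on each compact $F_k$ with $h\in C(\{F_k\})$, multiplication by $h^{\pm 1}$ preserves smoothness. Third, because $A=A^h$ on $\Lambda_\Gamma$ and $P^h_x(\Lambda_\Gamma)=1$ for $x$ outside an $\mathcal{E}$-exceptional set, formula (\ref{UbetaA}) rewrites, for such $x$, as $U^{\beta}_Af(x)=h(x)R^{h,\beta-\alpha}_{A^h}(f/h)(x)$, where $R^{h,\gamma}_{A^h}\psi(x):=E^h_x[\int^{+\infty}_0 e^{-\gamma t}\psi(X_t)I_{(t<\zeta)}dA^h_t]$ is the usual resolvent kernel of the classical PCAF $A^h$, so that
\[(g,U^{\beta+\gamma}_Af)_m=(g/h,\,R^{h,\beta+\gamma-\alpha}_{A^h}(f/h))_{h^2\cdot m}.\]

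For part (i): let $\mu^h$ be the classical Revuz measure of $A^h$ (uniquely determined, and smooth for $(\mathcal{E}^h,D(\mathcal{E}^h))$), and put $\mu:=h^{-2}\cdot\mu^h$, which by the second dictionary fact is smooth for $(\mathcal{E},D(\mathcal{E}))$. Given $g\in D(\mathcal{E})$ that is $\gamma$-coexcessive ($\gamma>\alpha$) and $f\in p\mathcal{B}(E)$ bounded, the displayed identity gives
\[\beta(g,U^{\beta+\gamma}_A(fI_{F_k}))_m=\beta(g/h,\,R^{h,\beta+\gamma-\alpha}_{A^h}(fI_{F_k}/h))_{h^2\cdot m},\]
and here $fI_{F_k}/h$ is bounded (since $h$ is bounded below on $F_k$) and $g/h$ is $(\gamma-\alpha)$-coexcessive for $\mathbf{M}^h$, so by (\ref{eq: classic revuz cor}) the right-hand side tends, as $\beta\to\infty$, to $\int_E (fI_{F_k}/h)(g/h)\,d\mu^h=\int_{F_k}fg\,d\mu$; letting $k\to\infty$ (monotone convergence, all quantities nonnegative) yields (\ref{eq: revuz cor}). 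If $A,B$ are $\mathcal{O}$-equivalent they share a defining set and agree on it, so $A^h=B^h$ and hence $\mu_A=\mu_B$; and $\mu$ is pinned down by (\ref{eq: revuz cor}) because among the $\gamma$-coexcessive functions in $D(\mathcal{E})$ there are strictly positive ones and $f$ ranges over all bounded nonnegative Borel functions.

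For part (ii): given $\mu\in\mathcal{S}$, set $\mu^h:=h^2\cdot\mu$, which is smooth for $(\mathcal{E}^h,D(\mathcal{E}^h))$. By the classical Revuz correspondence there is a PCAF $A^h$ of $\mathbf{M}^h$ with Revuz measure $\mu^h$, having some defining set and $\mathcal{E}$-exceptional set $N^h$. Fix a Borel $\mathcal{E}$-exceptional set $B\supseteq N^h\cup(E\setminus S)$ (with $(F_k)_k$ and $S$ as in Lemma \ref{invariantSet}) for which $E\setminus B$ is $\mathbf{M}^h$-invariant; put $\Gamma:=[[0,D_B[[$, which is optional by (\ref{defoptionalfieldall}) and Theorem \ref{thm-entrance time stopping time}, and define $A_t:=A^h_{t\wedge D_B}$ on the classical defining set of $A^h$ (which we may take $\subseteq\Lambda_\Gamma=\{D_B=\infty\}$) and $A_t:=0$ elsewhere. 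One then checks Definition \ref{def-PCAF2}(a)--(e): (a) because $I_\Gamma(\cdot,\omega)=I_{[0,D_B(\omega))}$ is decreasing and right continuous and $D_B(\theta_t\omega)=D_B(\omega)-t$ whenever $t<D_B(\omega)$; (c) because $B\subseteq E$ and $X$ is $E_\Delta$-valued force $D_B<\infty\Rightarrow D_B<\zeta$, so $\Lambda_\Gamma=\{D_B\geq\zeta\}=\{D_B=\infty\}$; (b) because for $\nu\in\mathcal{S}_0$ one has $\Gamma^c=[[D_B,\infty[[\subseteq\mathds{R}_+\times\{D_B<\zeta\}$ and, using $\nu(B)=0$ together with the $\mathbf{M}^h$-invariance of $E\setminus B$, $\overline{Q}_{\nu,t}(\{D_B<\zeta\};t<\zeta)=0$ for all $t$, whence $\mathbb{Q}_\nu(\Gamma^c)=0$; (d),(e) are inherited from the continuity and additivity of $A^h$ on $\Lambda_\Gamma$. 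That $\mu_A=\mu$ then follows from part (i) applied to this $A$ (its classical restriction being $A^h$, of classical Revuz measure $\mu^h=h^2\mu$, so $\mu_A=h^{-2}\mu^h=\mu$). Uniqueness up to $\mathcal{O}$-equivalence: two $\mathcal{O}$-PCAFs with Revuz measure $\mu$ restrict, on a common optional defining set, to classical PCAFs of $\mathbf{M}^h$ with the same classical Revuz measure $h^2\mu$, hence are classically equivalent there; shrinking the defining set further to where they literally coincide (keeping it optional and keeping (a)--(e)) produces the $\mathcal{O}$-equivalence.

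I expect the main obstacle to be part (ii), specifically the construction and bookkeeping of the optional defining set $\Gamma$: one must simultaneously ensure that $\Gamma$ is genuinely $\mathcal{O}$-measurable, that the exact shift/terminal-time identities in Definition \ref{def-PCAF2}(a)--(c) hold, and that (b) holds for \emph{every} $\nu\in\mathcal{S}_0$ at once --- it is this last requirement that forces the use of an $\mathbf{M}^h$-invariant good set rather than a mere $P^h_x$-a.s.\ statement --- and one must then check that the process $A$ built from the classical $A^h$ is optional for $(\mathcal{M}_t)_{t\geq0}$ and not merely adapted to a $P^h_\mu$-completed filtration; here one uses that classical PCAFs live in the minimal completed admissible filtration together with the fact (implicit in Lemma \ref{thm-relation M and F} and the proof of Theorem \ref{thm-Mu right continuous}) that $\mathcal{M}^\mu_t$ and $\mathcal{F}^{h,\mu}_t$ coincide on $\{t<\zeta\}$. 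The parameter shift by $\alpha$ between $\gamma$-coexcessivity for $\mathcal{E}$ and for $\mathbf{M}^h$, and the powers of $h$ relating $\mu$ to $\mu^h$, are the remaining bookkeeping points, but these are routine once the dictionary facts above are recorded.
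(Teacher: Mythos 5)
Your part (i) is correct and is in substance the same argument as the paper's, with only a cosmetic difference in bookkeeping: the paper passes to the rescaled functional $A^h_t=\int_0^t h(X_s)^{-1}\,dA_s$ (so the integrand stays $f$, which is bounded, and the resulting Revuz measure is $\mu_A=h^{-1}\cdot\mu_{A^h}$), whereas you apply Proposition~\ref{classic} directly to $A|_{\Lambda_\Gamma}$, pick up an extra factor of $h$ in the measure (hence $\mu=h^{-2}\cdot\mu^h$), and are forced to truncate $f$ by $I_{F_k}$ to keep $f/h$ bounded. These are the same $\mu$ and the same limit; fine.

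Your part (ii), however, takes a genuinely different route and contains a gap that I do not think can be repaired as written. You begin from an abstract classical PCAF $A^h$ of $\mathbf{M}^h$ supplied by the off-the-shelf Revuz theorem, and then try to promote $A_t:=A^h_{t\wedge D_B}$ (truncated at the entrance time of an exceptional Borel set $B$) to an $\mathcal{O}$-measurable process. The crux is $\{\mathcal{M}_t\}$-optionality of this $A$. You justify it by asserting that ``$\mathcal{M}^\mu_t$ and $\mathcal{F}^{h,\mu}_t$ coincide on $\{t<\zeta\}$''. This is not established in the paper and is unlikely to hold: Lemma~\ref{thm-relation M and F}(i) gives only the inclusion $\mathcal{M}^\mu_t\cap\{t<\zeta\}\subset\mathcal{F}^{h,\mu}_t$, and the two completions are built from incomparable families of null sets --- $\mathcal{F}^{h,\mu}_t$ allows $P^h_\mu$-null sets drawn from the whole of $\mathcal{F}^h_\infty$, while $\mathcal{M}^\mu_t$ (by (\ref{ali-Mmut}) and Lemma~\ref{lem-Mmut}(i)) only allows null sets lying in $\mathcal{F}^0_{t^+}$. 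A $P^h_\mu$-null event depending on the path after time $t$ need not be contained in any $\mathcal{F}^0_{t^+}$-measurable set that is $P^h_\mu(\,\cdot\,;t<\zeta)$-null, so the reverse inclusion fails in general, and your $A$ need not be $\{\mathcal{M}_t\}$-adapted. This is precisely the obstruction the paper's Lemma~\ref{them-RevuzS0} is designed to circumvent: rather than post-processing a black-box classical PCAF, it re-runs the Stampacchia/approximation construction, records the uniform-convergence set $\Gamma_t$ in a way that lands in $\mathcal{F}^0_{t^+}\subset\mathcal{M}_t$, and only then assembles the optional defining set $\Gamma$ and the optional process $\tilde A$, $A$; the extension to general $\mu\in\mathcal{S}$ is then made by patching together along the nest $(K_n)$. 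To fix your argument you would essentially have to replicate this explicit construction --- i.e., you cannot treat the classical PCAF as a black box and still obtain optionality for the un-augmented, pathwise-defined filtration $\{\mathcal{M}_t\}$.

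Two smaller remarks. First, your choice $\Gamma=[[0,D_B[[$ together with $A_t:=0$ off a (shrunk) classical defining set makes $A$ depend on a set that, again, need not be $\mathcal{M}_0$-measurable, so the optionality problem recurs there as well. Second, your uniqueness argument at the end of (ii) (``shrinking the defining set further to where they literally coincide (keeping it optional and keeping (a)--(e))'') is exactly where the same kind of measurability bookkeeping has to be carried out; the paper does this by intersecting with the sets $\tilde\Gamma_t$ built from the $\tilde A^h_{k_i}$, not by an abstract shrinking.
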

\begin{remark}
We shall say that $A$ and $\mu$ are Revuz corresponding to each other ( w.r.t. the positivity preserving coercive form $(\mathcal{E},D(\mathcal{E}))$), and $\mu$ is the Revuz measure of $A$, if $A$ and $\mu$ satisfy (\ref{eq: revuz cor}).
\end{remark}

\begin{proof}(Proof of Theorem \ref{thm-revuz cor} (i))

Suppose that $A$ is an $\mathcal{O}$-PCAF  with defining set $\Gamma.$ We define
\begin{equation}\label{AAh}
A^{h}_t(\omega)=\left\{
\begin{array}{ll}
\int^t_0\frac{1}{h(X_s)}dA_s(\omega), & \omega\in \Lambda_{\Gamma}\\
0, & \omega\notin \Lambda_{\Gamma}
\end{array}
\right.
\end{equation}
Then by Proposition \ref{classic}, $A^{h}$ is a PCAF of the $h$-associated process $\mathbf{M}^h$ w.r.t. the semi-Dirichlet space $(\mathcal{E}^h,D(\mathcal{E}^h)).$  By \cite[Section 4.1]{Oshima13}, there exists a Revuz measure $\mu_{A^h}$ associated with $A^h$.  We define $\mu_A:=(h^{-1})\cdot \mu_{A^h}$, then $\mu_A\in\mathcal{S}.$  If $g\in D(\mathcal{E})$ is a $\gamma$-coexcessive function $(\gamma>\alpha)$, then  $\frac{g}{h}\in D(\mathcal{E}^h)$ is a $(\gamma-\alpha)$-coexcessive function in $(\mathcal{E}^h,D(\mathcal{E}^h)).$ For any bounded $f\in \mathcal{B}(E)^+,$ by the Revuz correspondence between
$A^h$ and $\mu_{A^h},$ we have
\begin{align*}
&\,\,\lim\limits_{\beta\to+\infty}\beta(g,U^{\beta+\gamma}_Af)_m\\
&=\lim\limits_{\beta\to+\infty}\beta(g,hE_{\cdot}^h[I_{\Lambda_{\Gamma}}
\int^{+\infty}_0e^{-(\beta+\gamma-\alpha)t}\frac{f}{h}(X_t)
I_{(t<\zeta)}dA_t])_m\\
&=\lim\limits_{\beta\to+\infty}\beta(g,hE_{\cdot}^h[\int^{+\infty}_0
e^{-(\beta+\gamma-\alpha)t}f(X_t)dA^h_t])_m\\
&=\lim\limits_{\beta\to+\infty}\beta(\frac{g}{h},E_{\cdot}^h[\int^{+\infty}_0
e^{-(\beta-\alpha+\gamma)t}f(X_t)dA^h_t])_{h^2\cdot m}\\
&=\int_E\frac{g}{h}f\mu_{A^h}(dx)=\int_Egf\mu_{A}(dx).
\end{align*}
Therefore, (\ref{eq: revuz cor}) is true. Suppose that $B$ is another $\mathcal{O}$-PCAF which is $\mathcal{O}$-equivalent to $A.$ We define $B^h$ in the same manner as (\ref{AAh}). Then one can check that  $B^h$ and $A^h$ are equivalent w.r.t. $(\mathcal{E}^h,D(\mathcal{E}^h))$. Thus, by \cite[Theorem 4.1.4]{Oshima13} we get $\mu_{B^h}=\mu_{A^h}$, consequently
$\mu_B=(h^{-1})\cdot\mu_{B^h}=(h^{-1})\cdot\mu_{A^h}=\mu_A$.
\end{proof}
For proving Theorem \ref{thm-revuz cor} (ii) we prepare  two lemmas first.

\begin{lemma}\label{them-RevuzS0}
Let $\mu\in \mathcal{S}_0.$  Then there exists an unique (in $\mathcal{O}$-equivalent sense) $\mathcal{O}$-PCAF $A$ such that
\begin{align}\label{RevuzS0}
 E_{h\cdot \nu}^{h}[\int_0^{\infty}\frac{e^{-(\beta-\alpha)t} I_{\{t<\zeta\}}}{h(X_t)}dA_t] = \langle U_{\beta}\mu,~\nu \rangle,~~\forall ~ \nu \in \mathcal{S}_0,~ \beta > \alpha,
\end{align}
and consequently, $h(x)E_{x}^{h}[\int_0^{\infty}\frac{e^{-(\beta-\alpha)t} I_{\{t<\zeta\}}}{h(X_t)}dA_t]$ is an $\mathcal{E}$-quasi-continuous version of $U_{\beta}\mu$ for any $\beta>\alpha.$
\end{lemma}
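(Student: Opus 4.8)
The plan is to reduce the statement to the classical theory of positive continuous additive functionals for the semi-Dirichlet form $(\mathcal{E}^h,D(\mathcal{E}^h))$ and its properly associated process $\mathbf{M}^h$, and then transfer back through the $h$-transform, exactly in the spirit of the proof of Theorem \ref{thm-revuz cor} (i). First, since $\mu\in\mathcal{S}_0$, Lemma \ref{lemma-S_0 and S^h_0}(i) gives $h\cdot\mu\in\mathcal{S}_0^h$, with $(\beta-\alpha)$-potential $U^h_{\beta-\alpha}(h\cdot\mu)\in D(\mathcal{E}^h)$ for every $\beta>\alpha$. By the classical theory of PCAFs for the quasi-regular semi-Dirichlet form $(\mathcal{E}^h,D(\mathcal{E}^h))$ (cf. \cite[Section 4.1]{Oshima13}), there is a PCAF $A^h$ of $\mathbf{M}^h$, unique up to equivalence, with some classical defining set and some $\mathcal{E}$-exceptional set $N$, such that for every $\beta>\alpha$ the function $x\mapsto E^h_x[\int_0^\infty e^{-(\beta-\alpha)t}I_{\{t<\zeta\}}\,dA^h_t]$ is an $\mathcal{E}$-quasi-continuous $m$-version of $U^h_{\beta-\alpha}(h\cdot\mu)$; equivalently, by Fubini, $E^h_{\eta}[\int_0^\infty e^{-(\beta-\alpha)t}I_{\{t<\zeta\}}\,dA^h_t]=\langle U^h_{\beta-\alpha}(h\cdot\mu),\eta\rangle$ for all $\eta\in\mathcal{S}_0^h$.

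Next, using Lemma \ref{invariantSet} (applied so that the resulting $\mathbf{M}^h$-invariant Borel set $S$ avoids $N$ and $E\setminus S$ is $\mathcal{E}$-exceptional), I would construct an optional defining set $\Gamma\in\mathcal{O}$ encoding ``the path stays in $S$'': concretely $\Gamma$ is built from $[[0,D_{E\setminus S}[[$ together with the classical defining set of $A^h$, where $D_{E\setminus S}$ is an $\{\mathcal{M}_t\}$-stopping time by Theorem \ref{thm-entrance time stopping time}, so that membership in $\mathcal{O}$ follows from \eqref{defoptionalfieldall}. I then set
\[
A_t(\omega):=\int_0^t h\bigl(X_s(\omega)\bigr)\,dA^h_s(\omega)\quad\text{on }\Lambda_\Gamma,\qquad A_t:=(A_\zeta)_-\ \text{ for }t\ge\zeta,\qquad A\equiv 0\ \text{ off the projection of }\Gamma,
\]
and check that $A$ is an $\mathcal{O}$-PCAF in the sense of Definition \ref{def-PCAF2}: properties (a), (d), (e) are inherited from $A^h$ together with the continuity and strict positivity of $h$ on the compacts $F_k$; property (c) holds because on $S$ the path never leaves $S$ before $\zeta$ (invariance); and $\mathbb{Q}_\nu(\Gamma^c)=0$ for every $\nu\in\mathcal{S}_0$ because $\Gamma^c$ only involves paths entering the exceptional set $E\setminus S$, which is not charged by the kernels $Q_s(x,\cdot)$ for $x\in S$ (Lemma \ref{invariantSet}(iii), via Lemma \ref{lem:distribution}) and not charged by $\nu$ either. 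By construction (and Proposition \ref{classic}) one recovers $A^h_t=\int_0^t h(X_s)^{-1}\,dA_s$ as in \eqref{AAh}.

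Finally, I would verify \eqref{RevuzS0}. From $dA_t=h(X_t)\,dA^h_t$ on $\Lambda_\Gamma$ and, for $\nu\in\mathcal{S}_0$, $h\cdot\nu\in\mathcal{S}_0^h$ (Lemma \ref{lemma-S_0 and S^h_0}(i)), we get
\[
E^h_{h\cdot\nu}\Bigl[\int_0^\infty \frac{e^{-(\beta-\alpha)t}I_{\{t<\zeta\}}}{h(X_t)}\,dA_t\Bigr]=E^h_{h\cdot\nu}\Bigl[\int_0^\infty e^{-(\beta-\alpha)t}I_{\{t<\zeta\}}\,dA^h_t\Bigr]=\bigl\langle U^h_{\beta-\alpha}(h\cdot\mu),\,h\cdot\nu\bigr\rangle .
\]
By Lemma \ref{lemma-S_0 and S^h_0}(ii), $U_\beta\mu=h\cdot U^h_{\beta-\alpha}(h\cdot\mu)$ $m$-a.e., and since the right-hand side is $\mathcal{E}$-quasi-continuous (a product of $h\in\mathcal{H}$ with a quasi-continuous version of $U^h_{\beta-\alpha}(h\cdot\mu)$) while the left-hand side is the quasi-continuous element $U_\beta\mu\in D(\mathcal{E})$, the identity holds $\mathcal{E}$-q.e.; as $\nu$ charges no exceptional set, $\langle U^h_{\beta-\alpha}(h\cdot\mu),h\cdot\nu\rangle=\int h\,U^h_{\beta-\alpha}(h\cdot\mu)\,d\nu=\int U_\beta\mu\,d\nu=\langle U_\beta\mu,\nu\rangle$, which is \eqref{RevuzS0}. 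Reading the same chain pointwise q.e. rather than integrated gives that $h(x)E^h_x[\int_0^\infty h(X_t)^{-1}e^{-(\beta-\alpha)t}I_{\{t<\zeta\}}\,dA_t]=h(x)U^h_{\beta-\alpha}(h\cdot\mu)(x)$ is an $\mathcal{E}$-quasi-continuous version of $U_\beta\mu$, the last assertion. For uniqueness, if $A'$ is another $\mathcal{O}$-PCAF satisfying \eqref{RevuzS0}, then $(A')^h$ defined as in \eqref{AAh} is a PCAF of $\mathbf{M}^h$ with the same $(\beta-\alpha)$-potential as $A^h$ for all $\beta>\alpha$; classical uniqueness forces $(A')^h=A^h$ up to equivalence, whence $A$ and $A'$ are $\mathcal{O}$-equivalent. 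The genuinely delicate point — everything else being a routine translation through the $h$-transform — is the second step: fabricating a bona fide \emph{optional} defining set $\Gamma\in\mathcal{O}$ out of the classical PCAF $A^h$, whose defining set lies in $\mathcal{F}^h_\infty$ and carries an exceptional set, and checking that it satisfies (a)--(e) of Definition \ref{def-PCAF2} together with $\mathbb{Q}_\nu(\Gamma^c)=0$ for every $\nu\in\mathcal{S}_0$.
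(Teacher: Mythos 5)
Your plan reproduces the outer skeleton of the paper's argument (push $\mu$ to $h\cdot\mu$, invoke classical PCAF theory for $(\mathcal{E}^h,D(\mathcal{E}^h))$, set $dA=h(X)\,dA^h$, then translate the potential identities back via Lemma \ref{lemma-S_0 and S^h_0}(ii)), and the verification of \eqref{RevuzS0} and of uniqueness are correct and essentially identical to the paper's. But the step you yourself flag as ``the genuinely delicate point'' -- producing an optional defining set $\Gamma\in\mathcal{O}$ -- is not actually carried out, and the construction you sketch does not work. You propose to build $\Gamma$ ``from $[[0,D_{E\setminus S}[[$ together with the classical defining set of $A^h$,'' but a classical defining set $\Lambda^h$ is a subset of $\Omega$ lying in the completed $\sigma$-field $\mathcal{F}^h_\infty$ (depending on $P^h$-null sets); it is not an $\mathcal{M}_\infty$-set, let alone an optional subset of $\mathds{R}_+\times\Omega$, and there is no canonical way to ``combine'' it with the stochastic interval $[[0,D_{E\setminus S}[[$ so that the result lands in $\mathcal{O}=\sigma\{[[T,\infty[[\ :\ T\in\mathcal{T}\}$. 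Moreover, the invariant set $S$ has nothing to do with where $A^h$ fails to be defined: $\Lambda^h$ is determined by convergence of an approximating sequence, not by whether the path leaves $S$, so $[[0,D_{E\setminus S}[[$ does not capture the correct defining set.

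The paper sidesteps this entirely: rather than starting from an abstract classical PCAF $A^h$, it re-runs the classical approximation scheme (Cesàro means $f_k$ of truncated approximations, the integrals $\tilde A^h_k(t,\cdot)=\int_0^t e^{-(\beta-\alpha)s}\frac{f_k}{h}(X_s)ds$, and a subsequence $k_i$ chosen by an $\mathcal{E}^h$-energy estimate) and defines $\Gamma_t$ as the set of $\omega$ on which $\tilde A^h_{k_i}(\cdot,\omega)$ converges uniformly on $[0,t+\tfrac1l]$ for some $l$. Since each $\tilde A^h_{k_i}(u,\cdot)$ is $\mathcal{F}^0_u$-measurable, $\Gamma_t\in\mathcal{F}^0_{t^+}\subset\mathcal{M}_t$; moreover $t\mapsto I_{\tilde\Gamma}(t,\omega)$ is automatically decreasing and right-continuous, which yields $\Gamma\in\mathcal{O}$. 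This explicit construction, rather than an appeal to an already-built $A^h$, is precisely what produces the optional defining set, and it is the part your proposal does not supply.
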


\begin{proof}
Existence.

Let $u$ be an $\mathcal{E}$-quasi-continuous version of $U_{\beta}\mu$ for some temporarily fixed $\beta>\alpha$, then  there exists an $\mathcal{E}$-exceptional set $N$ such that $nR_{n+\beta}u\uparrow u$ on $E\setminus N$. Here, $R_{n+\beta}u:=\int^{+\infty}_0e^{-(n+\beta)t}Q_tu(\cdot)dt$.
Let
\begin{equation}\label{gn}
g_n(x)=\left\{
\begin{array}{ll}
n(u-R_{n+\beta}u)(x), & x\in E\setminus N,\\
0 ,     & x\in N.
\end{array}
\right.
\end{equation}

Then $R_{\beta}g_n\uparrow u$ for any $x\in E\setminus N$.
Let $h\in \mathcal{H},$  we have when $n\to\infty,$
\begin{align}\label{Rgn}
R^h_{\beta-\alpha}\frac{g_n}{h}:=& E_{\cdot}^{h}[\int_0^{\infty}\frac{e^{-(\beta-\alpha)t}g_n(X_t) I_{\{t<\zeta\}}}{h(X_t)}dt]\\
&=\frac{1}{h}R_{\beta}g_n\uparrow \frac{u}{h}=\frac{1}{h}\widetilde{U_{\beta}\mu}
=\widetilde{U^h_{\beta-\alpha}(h\cdot \mu)},\nonumber
\end{align}
where $\widetilde{U^h_{\beta-\alpha}(h\cdot \mu)}$ is an $\mathcal{E}$-quasi-continuous version of the ($\beta-\alpha$)-potential of $h\cdot \mu$ with respect to $(\mathcal{E}^h,D(\mathcal{E}^h)).$
Similar to the proof of
 \cite[Theorem 4.1.10]{Oshima13}, we can choose a subsequence $\{n_l\}$ such that $(R^h_{\beta-\alpha}\frac{f_k}{h})_{k\in\mathds{N}}$ with $f_k:=\frac{1}{k}\sum_{l=1}^{k}g_{n_l}$ converges to $\widetilde{U^h_{\beta-\alpha}(h\mu)}$  strongly in $(\mathcal{E}^h,D(\mathcal{E}^h))$.
Denote by
\begin{equation}\label{Ahtk}
\tilde{A}^h_k(t,\omega):=\int^t_0e^{-(\beta-\alpha)s}\frac{f_k}{h}(X_s)ds.
\end{equation}
Then similar to the argument of  \cite[Theorem 4.1.10]{Oshima13},
we can take a subsequence $k_i$ such that
\begin{equation}\label{subsequence}
 \mathcal{E}^h_{\beta-\alpha}(R^h_{\beta-\alpha}\frac{f_{k_{i+1}}-f_{k_{i}}}{h},
 R^h_{\beta-\alpha}\frac{f_{k_{i+1}}-f_{k_{i}}}{h})<2^{-6i}.
\end{equation}
Let
\begin{align}\label{Lamdah}
\Lambda^h:=\{\omega~|~\tilde{A}^h_{k_i}(t,\omega) \  &\mbox{converges uniformly in}\  t \\
&  \mbox{on each finite interval of} \ [0, \infty)\}, \nonumber
\end{align}
then $P^h_{\nu}((\Lambda^h)^c)=0$ for all  $\nu \in \mathcal{S}_0$  and hence $P^h_{x}((\Lambda^h)^c)=0$ for q.e. $x\in E$ (cf. \cite[(4.1.16)]{Oshima13}).
Let
\begin{equation}\label{def-tildeAh}
\tilde{A}^h(t,\omega)=\left\{
\begin{array}{ll}
\lim\limits_{i\to+\infty}\tilde{A}^h_{k_i}(t,\omega), & \omega\in \Lambda^h,\\
0,      & \omega \notin \Lambda^h .
\end{array}
\right.
\end{equation}
 Denote by $$A^h_t:=\int^t_0e^{(\beta-\alpha)u}d\tilde{A}^h_u,$$
 then $(A^h_t)_{t\geq0}$ is a PCAF in the sense of \cite[Section 5.1]{FOT11} or  \cite[Section 4.1]{Oshima13}, and
its Revuz measure w.r.t $(\mathcal{E}^h,D(\mathcal{E}^h))$ is $h\cdot \mu$.  That is, for any $\beta>\alpha,$ it holds that,
\begin{equation}\label{hrevuz}
U^h_{\beta-\alpha}(h\cdot \mu)=E^h_x[\int^{+\infty}_0e^{-(\beta-\alpha)t}
1(X_t)dA^h_t], \,\,\mbox{q.e.}\,\, x\in E.
\end{equation}
We now define
\begin{align*}\label{Gammat}
\Gamma_t=\bigcap_{m\geq 1}\bigcup_{l\geq m} \{\omega \in \Omega~|&~  \tilde{A}^h_{k_i}(u,\omega) ~
 \mbox{converges uniformly}\\
 &\mbox{ for} \  u\in [0,t+\frac{1}{l}]~ \mbox{when}~ i \rightarrow \infty\},
 \end{align*}
\begin{align*}
\tilde{\Gamma}= \{(t,\omega)~| ~t \in \mathds{R}_+ ,~ \omega \in \Gamma_t\},~~~\tau_{\tilde{\Gamma}}(\omega)=\inf \{t\geq0 |~(t,\omega) \notin \tilde{\Gamma}\},
\end{align*}
and
\begin{equation}\label{Gamma}
\Gamma:=\tilde{\Gamma} \cup~\{(t, \omega)~|~ t \geq \zeta(\omega),~ \tau_{\tilde{\Gamma}}(\omega)\geq  \zeta(\omega)\}.
\end{equation}

Then $I_{\Gamma}(t, \cdot), t\geq 0,$ is a decreasing right continuous $\{\mathcal{M}_t\}$-adapted process.

 Let $\tau_{\Gamma}(\omega):=\inf\{t\geq0~|~ (t, \omega) \notin \Gamma\}$ and $\Lambda_{\Gamma}:=\{\omega~|~\tau_{\Gamma}(\omega)\geq \zeta(\omega)\}, $ then by  (\ref{Gamma}) we have $\Lambda_{\Gamma}=\{\omega~|~\tau_{\Gamma}(\omega)=\infty\}.$  Moreover, comparing  (\ref{Lamdah}) and (\ref{Gamma}), we get $\Lambda_{\Gamma}\supset \Lambda^h$ and hence  $P^h_{\nu}((\Lambda_{\Gamma})^c)=P^h_{\nu}(\{\tau_{\Gamma}< \zeta\})=0$ for all  $\nu \in \mathcal{S}_0.$   Consequently,
\begin{align*}\label{}
\mathbb{Q}_{\nu}(\Gamma^c):&=\int^{+\infty}_0\overline{Q}_{\nu,t}
(I_{\Gamma^c}(t,\,\cdot\,))dt\\
&=E^h_{h\cdot\nu}[I_{\Lambda_{\Gamma}}\int^{+\infty}_0 I_{\{t\geq \tau_{\Gamma}\}}\frac{I_{(t<\zeta)}}{h(X_t)}dt]=0,
 ~~\forall~\nu\in\mathcal{S}_0.
\end{align*}
  We define
\begin{equation}\label{def-tildeA}
\tilde{A}_{t}(\omega):=\left\{
\begin{array}{ll}
\lim\limits_{i\to+\infty}\tilde{A}^h_{k_i}(t,\omega), &0\leq t<\tau_{\tilde{\Gamma}}(\omega),\\
\tilde{A}_{\tau_{\tilde{\Gamma}}}(\omega)_{-}, & t\geq \tau_{\tilde{\Gamma}}(\omega)>0,\\
0,      & \tau_{\Gamma}(\omega)=0 ,
\end{array}
\right.
\end{equation}
and
\begin{equation}\label{def-A}
A_{t}:=
\int^t_0e^{(\beta-\alpha)u}h(X_u)d\tilde{A}_u, ~~\forall ~ t\geq 0.
\end{equation}
Then we can check that $A$ is an $\mathcal{O}$-PCAF with defining set $\Gamma.$  Note that for $\omega \in \Lambda^h,$ we have
\begin{equation}\label{relationAA^h}
A_{t}(\omega)=\int^t_0h(X_u)dA^h_u(\omega).
\end{equation}
 For any $\beta>\alpha$ and $\nu\in \mathcal{S}_0,$  by
(\ref{hrevuz}) and Lemma \ref{lemma-potential measure between h trans} we get,
\begin{align*}
E^h_{h\cdot\nu}[\int^{+\infty}_0&\frac{e^{-(\beta-\alpha)t}I_{(t<\zeta)}}
{h(X_t)}dA_t]
=E^h_{h\cdot\nu}[I_{\Lambda^h}\int^{+\infty}_0e^{-(\beta-\alpha)t}dA^h_t]\\
=&\langle U^h_{\beta-\alpha}(h\mu),~ h\cdot\nu\rangle=\langle U_{\beta}\mu, ~\nu\rangle .
\end{align*}
Hence $A$ satisfies (\ref{RevuzS0}).

Uniqueness.

Suppose that $B$ is another $\mathcal{O}$-PCAF  with defining set $\Gamma'$ satisfying (\ref{RevuzS0}). We define
\begin{equation}\label{eq: new add function 1}
B^{h}_t(\omega)=\left\{
\begin{array}{ll}
\int^t_0\frac{1}{h(X_s)}dB_s(\omega), & \omega\in \Lambda_{\Gamma'}\\
0, & \omega\notin \Lambda_{\Gamma'}
\end{array}
\right.
\end{equation}
Then $B^{h}$ is a classical PCAF of the $h$-associated process $\mathbf{M}^h.$  By (\ref{RevuzS0}) we have for any $\beta>\alpha$ and $\nu \in \mathcal{S}_0,$
$$\langle U_{\beta}\mu,~\nu\rangle=E^h_{h\cdot \nu}[\int^{+\infty}_0\frac{e^{-(\beta-\alpha)t}I_{(t<\zeta)}}{h(X_t)}dB_t]
=E^h_{h\cdot\nu}[\int^{+\infty}_0e^{-(\beta-\alpha)t}1(X_t)dB^h_t].$$
Therefore, for any $\beta>\alpha$, by Lemma \ref{lemma-potential measure between h trans} it holds that $$E^h_x[\int^{+\infty}_0e^{-(\beta-\alpha)t}1(X_t)dB^h_t]=U^h_{\beta-\alpha}(h\cdot \mu)(x),~ q.e. ~x\in E,$$
 Hence, by \cite[Theorem 4.1.10 ]{Oshima13}  $B^h$ and $A^h$ are equivalent in the meaning of \cite[Section 5.1]{FOT11} or  \cite[Section 4.1]{Oshima13}, which means that
there exists a defining set $\tilde{\Lambda}^h$ such that  $P^h_{\nu}((\tilde{\Lambda}^h)^c)=0$ for all $\nu\in \mathcal{S}_0,$ and  $B^h_t(\omega)=A^h_t(\omega)$ for all
$t\geq0,\omega\in\tilde{\Lambda}^h$. We now set
\begin{equation*}
\tilde{\Gamma}_t:=\bigcap_{n\geq 1}\bigcup_{k\geq n} \{\omega \in \Omega~|~  B_s=A_s
~ \forall~ s\leq t+\frac{1}{k}\},
\end{equation*}
then $\tilde{\Gamma}_t \supset \tilde{\Lambda}^h,$  which implies that if we define
$$\tilde{\Gamma}:= \{(t,\omega)~| ~t \in \mathds{R}_+ ,~ \omega \in \tilde{\Gamma}_t ~\}\cap \Gamma \cap \Gamma',~~~\tau_{\tilde{\Gamma}}(\omega)=\inf \{t\geq0|~(t,\omega) \notin \tilde{\Gamma}\},$$
and
$$\Gamma'':=\tilde{\Gamma}\cup\{(t,\omega)|~t\geq\zeta(\omega),~~\tau_{\tilde{\Gamma}}(\omega)\geq\zeta(\omega)\},$$
 then $\mathbb{Q}_{\nu}((\Gamma'')^c)=0$ for all $\nu \in \mathcal{S}_0.$
We can check that $\Gamma''$ is a common defining set for $B$ and $A$, and $B$ and $A$ are identical on $\Gamma''.$  That is,  $B$ and $A$ are $\mathcal{O}$-equivalent.
\end{proof}
\begin{lemma}\label{lem-RevizS0}
Let $A$ be an $\mathcal{O}$-PCAF and $\mu\in \mathcal{S}_0.$  Then the following two assertions are equivalent to each other.

(i) A and $\mu$ satisfy (\ref{RevuzS0}).

(ii)  $\mu$ is the Revuz measure of $A,$ i.e., the assertion (\ref{eq: revuz cor}) is true.
\end{lemma}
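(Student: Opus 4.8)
The plan is to reduce both assertions to the corresponding statement for the classical PCAF $A^h$ of the $h$-associated process $\mathbf{M}^h$ and its Revuz measure with respect to the semi-Dirichlet form $(\mathcal{E}^h, D(\mathcal{E}^h))$, then invoke the Dirichlet-form theory (e.g. \cite[Theorem 4.1.10]{Oshima13}) together with Lemma \ref{lemma-potential measure between h trans} and Remark \ref{lemma-coexcessive by h trans}. First I would associate to the $\mathcal{O}$-PCAF $A$ (with defining set $\Gamma$) the classical PCAF $A^h$ by setting $A^h_t = \int_0^t h(X_s)^{-1}\,dA_s$ on $\Lambda_{\Gamma}$ and $0$ off it, exactly as in (\ref{AAh}); by Proposition \ref{classic}, $A^h$ is a genuine PCAF of $\mathbf{M}^h$. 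The key translation is the identity, valid for $\nu\in\mathcal{S}_0$ and $\beta>\alpha$,
\begin{equation*}
E^h_{h\cdot\nu}\Bigl[\int_0^{\infty}\frac{e^{-(\beta-\alpha)t}I_{\{t<\zeta\}}}{h(X_t)}\,dA_t\Bigr]
= E^h_{h\cdot\nu}\Bigl[\int_0^{\infty}e^{-(\beta-\alpha)t}\,dA^h_t\Bigr]
= \langle U^h_{\beta-\alpha}(h\cdot\nu),\, h\cdot\mu\rangle^{\sim}\ \text{-type pairing},
\end{equation*}
so that (\ref{RevuzS0}) for $(A,\mu)$ is literally equivalent, via $U_\beta\mu = h\,U^h_{\beta-\alpha}(h\cdot\mu)$ (Lemma \ref{lemma-potential measure between h trans}) and $h\cdot\mu\in\mathcal{S}^h_0$ (Lemma \ref{lemma-S_0 and S^h_0}(i)), to the assertion that $h\cdot\mu$ is the $(\beta-\alpha)$-potential-characterization of $A^h$ for all $\beta>\alpha$, which by the theory of semi-Dirichlet forms is one of the equivalent defining properties of "$h\cdot\mu$ is the Revuz measure of $A^h$."

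Next I would show that assertion (ii) — that $\mu$ is the Revuz measure of $A$ in the sense of (\ref{eq: revuz cor}) — is likewise equivalent to "$h\cdot\mu$ is the Revuz measure of $A^h$." Given a $\gamma$-coexcessive $g\in D(\mathcal{E})$ with $\gamma>\alpha$, Remark \ref{lemma-coexcessive by h trans} gives that $g/h\in D(\mathcal{E}^h)$ is $(\gamma-\alpha)$-coexcessive; unwinding $U^{\beta+\gamma}_A f$ from (\ref{UbetaA}) exactly as in the chain of equalities in the proof of Theorem \ref{thm-revuz cor}(i) converts
\begin{equation*}
\lim_{\beta\to\infty}\beta\,(g, U^{\beta+\gamma}_A f)_m
= \lim_{\beta\to\infty}\beta\,\Bigl(\tfrac{g}{h},\, E^h_{\cdot}\bigl[\textstyle\int_0^{\infty}e^{-(\beta-\alpha+\gamma)t}f(X_t)\,dA^h_t\bigr]\Bigr)_{h^2\cdot m},
\end{equation*}
and the right-hand side equals $\int_E \tfrac{g}{h} f\, d(h\cdot\mu) = \int_E g f\, d\mu$ precisely when $h\cdot\mu$ is the Revuz measure of $A^h$ (using that (\ref{eq: classic revuz cor}) for $A^h$ holds for one $\gamma$-coexcessive $g$ and all bounded $f$ iff it holds for all such $g,f$, by the density of $(\gamma-\alpha)$-coexcessive functions). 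Thus (i) $\Leftrightarrow$ "$h\cdot\mu$ is Revuz for $A^h$" $\Leftrightarrow$ (ii), which proves the lemma.

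The main obstacle I anticipate is the careful bookkeeping around the defining set and the exceptional set: one must verify that the quantities $E^h_x[\cdots\,dA_t]$ and $E^h_x[\cdots\,dA^h_t]$ agree $q.e.$ (not merely $m$-a.e.), that the passage between "holds for $\nu\in\mathcal{S}_0$" and "holds $q.e.\ x$" is legitimate in both directions (this is where $\mathbb{Q}_\nu(\Gamma^c)=0$ for all $\nu\in\mathcal{S}_0$ and the $\mathcal{E}$-nest structure enter), and that the several equivalent formulations of the Revuz correspondence for the semi-Dirichlet form $(\mathcal{E}^h,D(\mathcal{E}^h))$ — the potential characterization (\ref{RevuzS0})-type and the limit characterization (\ref{eq: classic revuz cor}) — are indeed equivalent as cited from \cite[Section 4.1]{Oshima13}; once those are in hand, the $h$-transform dictionary makes everything routine.
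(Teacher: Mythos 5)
Your proposal is correct and follows essentially the same route as the paper: both pass to the classical PCAF $A^h$ of $\mathbf{M}^h$ via (\ref{AAh}), show that (\ref{RevuzS0}) is equivalent (by Lemma \ref{lemma-potential measure between h trans} and Lemma \ref{lemma-S_0 and S^h_0}(i)) to the $q.e.$ potential characterization of ``$h\cdot\mu$ is the Revuz measure of $A^h$,'' and show that (\ref{eq: revuz cor}) is equivalent (by Remark \ref{lemma-coexcessive by h trans} and the $h$-transform dictionary already used in the proof of Theorem \ref{thm-revuz cor}(i)) to the limit characterization (\ref{eq: classic revuz cor}) for $A^h$, so both assertions reduce to the same statement from classical semi-Dirichlet form theory.
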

\begin{proof}
let $A^h$ be defined by (\ref{AAh}). Then $\mu\in \mathcal{S}_0$ is the Revuz measure of $A$ w.r.t. $(\mathcal{E}, D(\mathcal{E}))$ if and only if $h\cdot \mu$ is the Revuz measure of $A^h$ w.r.t. $(\mathcal{E}^h, D(\mathcal{E}^h)).$ By the theory of classic Revuz correspondence, the latter is true if and only if for all $\beta> \alpha,$
\begin{equation}\label{hrevuzagain}
U^h_{\beta-\alpha}(h\cdot \mu)=E^h_x[\int^{+\infty}_0e^{-(\beta-\alpha)t}
1(X_t)dA^h_t], \,\,\mbox{q.e.}\,\, x\in E.
\end{equation}
Applying Lemma \ref{lemma-potential measure between h trans}, we see that (\ref{hrevuzagain}) is true if and only if (\ref{RevuzS0}) is true.
\end{proof}

\begin{proof} (Proof of Theorem \ref{thm-revuz cor}(ii))

For $\mu\in\mathcal{S}$, by Lemma \ref{SS0compact} there exists an $\mathcal{E}$-nest $(K_n)_{n\geq1}$ consisting of compact sets such that $I_{K_n}\cdot\mu\in\mathcal{S}_0$ for each $n\geq1$.
Then by Lemma \ref{them-RevuzS0} and Lemma \ref{lem-RevizS0}, there exists an unique $\mathcal{O}$-PCAF $A_{I_{K_n}\cdot\mu}$ whose Revuz measure is $I_{K_n}\cdot\mu$
for each $n\geq1$. Hence, for any $\gamma$-coexcessive $(\gamma\geq\alpha)$ function $g\in D(\mathcal{E})$ and bounded  $f\in \mathcal{B}(E)^+$ we get
\begin{align*}
&\lim\limits_{\beta\to+\infty}\beta(g,U^{\beta+\gamma}_{I_{K_n}\cdot A_{I_{K_{n+1}\cdot\mu}}}f)_m\\
&=\lim\limits_{\beta\to+\infty}\beta(g,hE^h[\int^{+\infty}_0e^{-(\beta+\gamma-\alpha)t}\frac{fI_{K_n}}{h}(X_t)I_{(t<\zeta)}dA_{I_{K_{n+1}}\cdot\mu}(t)])_m\\
&=\int_EfgI_{K_n}d(I_{K_{n+1}}\cdot\mu)=\int_EfgI_{K_n}d\mu,
\end{align*}
which means the Revuz measure of $I_{K_n}\cdot A_{I_{K_{n+1}\cdot\mu}}$ is also $I_{K_n}\cdot \mu$.
Hence, $I_{K_n}\cdot A_{I_{K_{n+1}}\cdot\mu}$ and $A_{I_{K_n}\cdot\mu}$ are $\mathcal{O}$-equivalent. ( Here and henceforth $I_{K_n}\cdot A_{I_{K_{n+1}}\cdot\mu}(t):=\int_0^tI_{K_n}(X_u) dA_{I_{K_{n+1}}\cdot\mu}(u).$)

Let $\Gamma^1$ be a defining set of $A_{I_{K_1}\cdot\mu}.$  For each $n\geq 2,$ we may take a common defining set
$\Gamma^n$ such that $I_{K_{n-1}}\cdot A_{I_{K_{n}}\cdot\mu}$ and $A_{I_{K_{n-1}}\cdot\mu}$ are identical on $\Gamma^n.$  Without loss of generality we may assume the $\Gamma^n \subset \Gamma^{n-1}$ for each $n\geq 2.$  Let $\tau_{\Gamma^n}(\omega)=\inf\{t\geq0~|~ (t, \omega) \notin \Gamma^n\}$ and  $\tau_{\Gamma^{\infty}}(\omega):=\inf_{n\geq 1}\tau_{\Gamma^n}(\omega).$  Then by the right continuity of $\{\mathcal{M}_t\},$ we have $\tau_{\Gamma^{\infty}}\in \mathcal{T}.$ Moreover, since
$\{\tau_{\Gamma^{\infty}}< \zeta\} \subset \cup_{n\geq 1} \{\tau_{\Gamma^n}<\zeta\},$ therefore $P^h_{\nu}(\{\tau_{\Gamma^{\infty}}< \zeta\})=0$ for all $\nu\in\mathcal{S}_0.$

We define
$\tau_{K_n}(\omega)=\inf\{t\geq 0~|~ X_t(\omega) \notin K_n\}$
for $n\geq 1.$ Similar to the argument of  \cite[IV.Proposition 5.30 (i)]{MR92}, we can show that  there exists an $\mathcal{E}$-exceptional set $N$, such that for any $x\in E\setminus N$, $P_x^{h}(\lim_{n\rightarrow \infty}\tau_{K_n}<\zeta)=0$. Therefore, if we let $\tau_{K_{\infty}}:=\lim_{n\rightarrow \infty}\tau_{K_n},$ then $P^h_{\nu}(\{\tau_{K_{\infty}}< \zeta\})=0$ for all $\nu\in\mathcal{S}_0.$
We now set $\eta=\tau_{\Gamma^{\infty}}\wedge \tau_{K_{\infty}}$ and define
\begin{equation*}
\Gamma:= \{(t,\omega)~|~ t< \eta(\omega)\}\cup \{(t,\omega)~|~ t\geq \zeta (\omega),~ \eta(\omega)\geq \zeta (\omega)\}.
\end{equation*}
We can check that $\Gamma$ satisfies Definition \ref{def-PCAF2} (a),(b), and (c). Moreover,  if $t< \eta(\omega),$  then
$A_{I_{K_{n}}\cdot\mu}(t,\omega)= I_{K_{n}}\cdot A_{I_{K_{n+l}}\cdot\mu}(t,\omega)$ for all $n,l\geq 1.$ Set $\tau_{K_0}(\omega):=0$ and define
\begin{equation}\label{def-A for smooth}
A(t,\omega)=\left\{
\begin{array}{ll}
A_{I_{K_n}\cdot\mu}(t,\omega), &0\leq t<\eta(\omega),\tau_ {K_{n-1}}(\omega)\leq t<\tau_{K_n}(\omega)\\
A_{\eta-}(\omega), & t\geq\eta(\omega)>0\\
0,      & \eta(\omega)=0 .
\end{array}
\right.
\end{equation}
Then we can check that $A$ satisfies Definition \ref{def-PCAF2} (d) and (e). Therefore, $A$ is an $\mathcal{O}$-PCAF. By Part (i) of Theorem \ref{thm-revuz cor}, there exists an unique smooth measure $\mu_A,$  such that for any  $\gamma$-coexcessive ($\gamma>\alpha$) function $g\in D(\mathcal{E})$  and any bounded function $f\in \mathcal{B}(E)^+,$ it holds that
\begin{equation*}
\lim\limits_{\beta\to+\infty}\beta(g,U^{\beta+\gamma}_Af)_m=
\int_Efg\mu_A(dx).
\end{equation*}
But for any $n\geq 1,$ we have
\begin{align*}
&\int_EfgI_{K_n}\mu_A(dx)=\lim\limits_{\beta\to+\infty}
\beta(g,U^{\beta+\gamma}_A
fI_{K_n})_m\\
&=\lim\limits_{\beta\to+\infty}\lim\limits_{l\to+\infty}\beta(g,
hE_{\cdot}^h[I_{\{\eta\geq \zeta\}}\int^{\tau_{K_{n+l}}}_0e^{-(\beta-\alpha)t}
\frac{fI_{K_n}}{h}(X_t)I_{(t<\zeta)}dA_t])_m\\
&=\lim\limits_{\beta\to+\infty}\lim\limits_{l\to+\infty}\beta(g,
hE_{\cdot}^h[\int^{\tau_{K_{n+l}}}_0e^{-(\beta-\alpha)t}
\frac{f}{h}(X_t)I_{(t<\zeta)}dA_{I_{K_{n}}\cdot\mu}(t)])_m\\
&=\int_EfgI_{K_n}\mu(dx).
\end{align*}
Therefore, $\mu_A=\mu,$ i.e., $\mu$ is the Revuz measure of $A.$ Suppose that $B$ is another $\mathcal{O}$-PCAF whose Revuz measure is $\mu.$ Then $I_{K_n}\cdot B$ is Revuz corresponding to $I_{K_n}\cdot \mu$ for all $n\geq 1.$ Hence $B$ is $\mathcal{O}$-equivalent to $A.$

\end{proof}

\subsection{Optional measure $\mathbb{Q}_{x}^{A}(\cdot)$ generated by A}
\label{independence}
In this subsection we are going to show that the
the Revuz correspondence defined by (\ref{eq: revuz cor}) and (\ref{UbetaA}) is independent of $h\in \mathcal{H}$ (see Corollary \ref{Revuzindependence}).
To this end we introduce an optional measure $\mathbb{Q}_{x}^{A}(\cdot)$ generated by an $\mathcal{O}$-PCAF $A$, which we believe will have interest by its own and
will be useful in the further study of stochastic analysis related to positivity preserving coercive forms.

\begin{definition}\label{optionalmeasure}
Let $A$ be an $\mathcal{O}$-PCAF.  For $x\in E,$  we define a $\sigma$-finite measure $\mathbb{Q}_{x}^{A}(\cdot)$ on $\mathcal{O}$ by setting:
\begin{equation}\label{QxA}
\mathbb{Q}_{x}^{A}(H):=h(x)E_x^h[\int_0^{\infty}I_H(t,\cdot)\frac{e^{\alpha t} I_{\{t<\zeta\}}}{h(X_t)}dA_t], ~~\forall ~H\in \mathcal{O}.
\end{equation}
We call $\mathbb{Q}_{x}^{A}(\cdot)$ an optional measure generated by $A.$

For $\nu \in \mathcal{S}_0,$ we write
\begin{equation}\label{QnuA}
\mathbb{Q}_{\nu}^{A}(H):=\int_E \mathbb{Q}_{x}^{A}(H)\nu(dx)
=E_{h\cdot\nu}^h[\int_0^{\infty}I_H(t,\cdot)\frac{e^{\alpha t} I_{\{t<\zeta\}}}{h(X_t)}dA_t], ~~\forall ~H\in \mathcal{O}.
\end{equation}

\end{definition}

\begin{lemma}\label{QxB}
  Let  $B$ be an $\mathcal{O}$-PCAF which is $\mathcal{O}$-equivalent to $A,$  and $\mathbb{Q}_{x}^{B}(\cdot)$ be defined by (\ref{QxA}) with $A$ replaced by $B.$ Then there exists an $\mathcal{E}$-exceptional set $N,$ such that $\mathbb{Q}_{x}^{B}(\cdot)=\mathbb{Q}_{x}^{A}(\cdot)$ for all $x\in E\setminus N,$ and hence  $\mathbb{Q}_{\nu}^{B}(\cdot)=\mathbb{Q}_{\nu}^{A}(\cdot)$ for all $\nu\in\mathcal{S}_0.$
 \end{lemma}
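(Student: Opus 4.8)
The plan is to reduce the claim to the classical uniqueness statement for PCAFs of the $h$-associated process $\mathbf{M}^h$. Recall that by Definition \ref{def-PCAF2}(ii), $\mathcal{O}$-equivalence of $A$ and $B$ means that they share a common defining set $\Gamma$ on which their restrictions coincide. First I would fix $h\in\mathcal{H}$ and the associated process $\mathbf{M}^h$, and pass from $A$ and $B$ to their $h$-companions $A^h$ and $B^h$ via formula (\ref{AAh}), i.e. $A^h_t(\omega)=\int_0^t h(X_s)^{-1}\,dA_s(\omega)$ for $\omega\in\Lambda_\Gamma$ and $A^h_t=0$ otherwise, and similarly for $B^h$. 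By Proposition \ref{classic}, $A^h$ and $B^h$ are PCAFs of $\mathbf{M}^h$ in the classical sense, with common defining set $\Lambda_\Gamma$ and some common $\mathcal{E}$-exceptional set $N$ (after intersecting the two exceptional sets and, if necessary, enlarging $N$ so that $P^h_x((\Lambda_\Gamma)^c)=0$ for $x\in E\setminus N$, as in the proof of Proposition \ref{classic}).

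Next I would observe that, since $A$ and $B$ are identical on the common defining set $\Gamma$, they are in particular identical on $\Lambda_\Gamma$ (which is contained in $\Gamma$ by property (c) of Definition \ref{def-PCAF2}), hence $A^h_t(\omega)=B^h_t(\omega)$ for all $t\geq 0$ and all $\omega\in\Lambda_\Gamma$. Thus $A^h$ and $B^h$ are literally equal, not merely equivalent. Consequently, for every $x\in E\setminus N$,
\begin{align*}
\mathbb{Q}^{A}_{x}(H)
&=h(x)E_x^h\Big[\int_0^{\infty}I_H(t,\cdot)\frac{e^{\alpha t}I_{\{t<\zeta\}}}{h(X_t)}\,dA_t\Big]
=h(x)E_x^h\Big[I_{\Lambda_\Gamma}\int_0^{\infty}I_H(t,\cdot)\,e^{\alpha t}I_{\{t<\zeta\}}\,dA^h_t\Big]\\
&=h(x)E_x^h\Big[I_{\Lambda_\Gamma}\int_0^{\infty}I_H(t,\cdot)\,e^{\alpha t}I_{\{t<\zeta\}}\,dB^h_t\Big]
=\mathbb{Q}^{B}_{x}(H),\qquad\forall\,H\in\mathcal{O}.
\end{align*}
Here the first rewriting uses $dA_t=h(X_t)\,dA^h_t$ on $\Lambda_\Gamma$ together with $P^h_x((\Lambda_\Gamma)^c)=0$ for $x\notin N$; the same identity for $B$ gives the last step. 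Integrating against $\nu\in\mathcal{S}_0$ (which charges no $\mathcal{E}$-exceptional set, in particular not $N$) yields $\mathbb{Q}^{B}_{\nu}=\mathbb{Q}^{A}_{\nu}$ at once.

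I do not anticipate a serious obstacle: the only mild subtlety is bookkeeping the exceptional set. One must make sure the single set $N$ simultaneously handles (i) the defining-set equivalence inherited from Proposition \ref{classic}, (ii) the null property $P^h_x((\Lambda_\Gamma)^c)=0$ needed to drop the indicator $I_{\Lambda_\Gamma}$, and (iii) measurability of $x\mapsto\mathbb{Q}^A_x(H)$ off $N$. All three are of the same nature as the standard constructions for PCAFs of right processes, so I would simply take $N$ to be the union of the finitely (countably) many exceptional sets arising, which is again $\mathcal{E}$-exceptional. With that, the lemma follows.
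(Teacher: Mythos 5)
Your proof is correct and follows essentially the same route as the paper: insert the indicator $I_{\Lambda_\Gamma}$ of the common defining set (justified because $P^h_x((\Lambda_\Gamma)^c)=0$ off an $\mathcal{E}$-exceptional set $N$, by the argument in Proposition \ref{classic}), and observe that $A$ and $B$ coincide there. The detour through $A^h$ and $B^h$ is harmless but not needed — the paper replaces $dB_t$ by $dA_t$ directly on $\Lambda_\Gamma$.
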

\begin{proof}
Let $\Gamma$ be a common defining set for $B$ and $A,$ and $\Lambda_{\Gamma}$ be specified by Definition \ref{def-PCAF2} (c). Then there exists an $\mathcal{E}$-exceptional set $N$ such that $P^h_x((\Lambda_{\Gamma})^c)=E^h_x[I_{\{ \tau_{\Gamma}<\zeta\}}]=0$ for $x\in E\setminus N$ (cf. the proof of Proposition \ref{classic}). Then,
for any $H\in \mathcal{O},$ we have
\begin{align*}
\mathbb{Q}_{x}^{B}(H)&=h(x)E_x^h[I_{\Lambda_{\Gamma}}
\int_0^{\infty}I_H(t,\cdot)\frac{e^{\alpha t} I_{\{t<\zeta\}}}{h(X_t)}dB_t]\\
&=h(x)E_x^h[I_{\Lambda_{\Gamma}}
\int_0^{\infty}I_H(t,\cdot)\frac{e^{\alpha t} I_{\{t<\zeta\}}}
{h(X_t)}dA_t]=\mathbb{Q}_{x}^{A}(H).
\end{align*}
\end{proof}
In the proof of Theorem \ref{Qindependenth} below we shall make use of the predictable $\sigma$-field $\mathcal{P}$ related to $\{\mathcal{M}_t\}$. Recall that $\mathcal{P}$ is the $\sigma$-field on $[[0, \infty[[:=\mathds{R}_+\times \Omega$ generated by all the left continuous $\{\mathcal{M}_t\}$-adapted processes. It is known that (cf. \cite[Theorem 3.21]{HWY92}) $\mathcal{P} \subset \mathcal{O}$ and $\mathcal{P}$ is the $\sigma$-field generated by the following sets
\begin{equation}\label{mathcalP}
\{\{0\}\times F~|~ F\in \mathcal{M}_{0}\}\bigcup \{[p,q)\times F~|~ 0< p<q<\infty,~ p,q\in \mathds{Q}_+, ~F\in \mathcal{M}_{p-}\},
\end{equation}
where $\mathds{Q}_+$ stands for all the nonnegative rational numbers and $\mathcal{M}_{p-}= \bigvee_{s<p}\mathcal{M}_{s}.$

Recall that $\mathcal{H}$ is the collection of all strictly positive $\mathcal{E}$-quasi-continuous $\alpha$-excessive functions.
\begin{theorem}\label{Qindependenth}
 The optional measure  $\mathbb{Q}_{x}^{A}(\cdot)$ defined by (\ref{QxA}) is independent of $h\in \mathcal{H}$ in the following sense.

(i) Let $\mathbb{Q}_{x}^{'A}(\cdot)$ be defined by (\ref{QxA}) with $h$  replaced by another $h'\in \mathcal{H}.$  Then there exists an $\mathcal{E}$-exceptional set $N$ such that $\mathbb{Q}_{x}^{'A}(\cdot)=\mathbb{Q}_{x}^{A}(\cdot)$ for all $x\in E\setminus N.$

(ii) Consequently, let $\mathbb{Q}_{\nu}^{'A}(\cdot)$ be defined by (\ref{QnuA}) with $h$ replaced by another $h'\in \mathcal{H}.$  Then for any $\nu \in \mathcal{S}_0,$ we have $\mathbb{Q}_{\nu}^{'A}(\cdot)=\mathbb{Q}_{\nu}^{A}(\cdot).$
\end{theorem}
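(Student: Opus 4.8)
The plan is to reduce the assertion to the $h$-independence already established for the (unexpanded) distribution flows in Theorem \ref{thm-independent}, using the fact that an $\mathcal{O}$-PCAF $A$ can be approximated from inside by finite sums built out of its increments over dyadic intervals, and that on each such interval the integrand is a predictable process. First I would observe that by Lemma \ref{QxB} the measure $\mathbb{Q}_x^A(\cdot)$ depends (up to an $\mathcal{E}$-exceptional set) only on the $\mathcal{O}$-equivalence class of $A$, so it suffices to compare the two expressions \eqref{QxA} (one with $h$, one with $h'$) on a generating family of optional sets $H$. Both $\mathbb{Q}_x^A$ and $\mathbb{Q}_x^{'A}$ are $\sigma$-finite measures on $\mathcal{O}$, so by a monotone class argument it is enough to check equality on the sets $[[T,\infty[[$ with $T\in\mathcal{T}$ (which generate $\mathcal{O}$ by \eqref{defoptionalfieldall}), or, after a further reduction, on stochastic intervals of the form $[[S,T[[$ with $S\le T$ in $\mathcal{T}$.

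The key computation is this: for $S\le T$ in $\mathcal{T}$ and $H=[[S,T[[$,
\[
\mathbb{Q}_x^A([[S,T[[)=h(x)E_x^h\Big[\int_{[S,T)}\frac{e^{\alpha t}I_{\{t<\zeta\}}}{h(X_t)}\,dA_t\Big].
\]
Approximating $A$ by its dyadic Riemann sums, $\int_{[S,T)} f(t)\,dA_t = \lim_n \sum_k f(t^n_{k})\big(A_{t^n_{k+1}}-A_{t^n_k}\big)$ along the dyadic partition clipped to $[S,T)$, and using the additivity property \eqref{def-A} (d)--(e) of the $\mathcal{O}$-PCAF together with the strong Markov property of $\mathbf{M}^h$, each term $h(x)E_x^h[\,e^{\alpha t^n_k}h(X_{t^n_k})^{-1}(A_{t^n_{k+1}}-A_{t^n_k})\,;\,t^n_k<\zeta\,]$ can be rewritten, via the definition of $\overline{Q}_{x,t^n_k}$ in \eqref{eq-distribution} together with Lemma \ref{thm-sigmazeta} and the expanded distribution flow of Definition \ref{def-exdistribution}, purely in terms of the distribution flow $(\overline{Q}_{x,\sigma})_{\sigma\in\mathcal T}$ and the increments of $A$ — quantities which by Theorem \ref{thm-independent} (and Proposition \ref{pro-exdistribution} (ii)) do not depend on $h$. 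Passing to the limit in $n$ then gives $\mathbb{Q}_x^A([[S,T[[)=\mathbb{Q}_x^{'A}([[S,T[[)$ for q.e.\ $x$. The predictable $\sigma$-field description \eqref{mathcalP} is the right bookkeeping device here: the integrands $I_{[[S,T[[}(t)\,e^{\alpha t}/h(X_t)$ are not predictable as written, but on each dyadic sub-interval the evaluation $e^{\alpha t^n_k}/h(X_{t^n_k})$ is $\mathcal{M}_{t^n_k}$-measurable, so the whole Riemann sum is a predictable process, and $h$-independence of its $\mathbb{Q}$-expectation follows from $h$-independence of the expanded distribution flow restricted to $\mathcal{M}_{t^n_k}$.

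Part (ii) is then immediate by integrating the conclusion of (i) against $\nu\in\mathcal{S}_0$, since $\nu$ charges no $\mathcal{E}$-exceptional set. The main obstacle I anticipate is making the Riemann-sum approximation of $\int f\,dA_t$ rigorous at the level of the $\sigma$-finite measures $\mathbb{Q}_x^A$: one must justify the interchange of the limit with the expectation $E_x^h$ (e.g.\ by monotone convergence applied to $f\ge 0$, exhausting $\Omega$ along the nest $(F_k)$ and the defining set $\Gamma$ of $A$ so that all the quantities in sight are finite), and one must check that the continuity of $t\mapsto A_t$ on $[0,\tau_\Gamma)$ guarantees convergence of the dyadic sums $\overline{Q}_{x,t}$-a.e. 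Once the approximation is under control, the $h$-independence is a direct transfer of Theorem \ref{thm-independent} and Proposition \ref{pro-exdistribution} (ii) through the linear (in $dA$) structure of \eqref{QxA}.
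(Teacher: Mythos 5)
Your reduction to predictable rectangles and the use of Lemma \ref{QxB} are both in the spirit of the paper's proof, but the core step — rewriting each Riemann-sum term in $h$-independent language — does not go through. After invoking the Markov property of $\mathbf{M}^h$, the term
\[
h(x)E_x^h\Big[\tfrac{e^{\alpha t^n_k}}{h(X_{t^n_k})}\big(A_{t^n_{k+1}}-A_{t^n_k}\big);\,t^n_k<\zeta\Big]
=\overline{Q}_{x,t^n_k}\big[\,E^h_{X_{t^n_k}}\![A_{\delta_n}];\,t^n_k<\zeta\,\big],
\]
so while the outer integral is against the $h$-independent measure $\overline{Q}_{x,t^n_k}$, the inner function $y\mapsto E^h_y[A_{\delta_n}]$ still depends on $h$: the increment $A_{\delta_n}$ is $\mathcal{M}_{\delta_n}$-measurable but \emph{not} $\mathcal{M}_{0}$-measurable, so its $E^h_y$-expectation is not a $\overline{Q}_{y,\cdot}$-integral and is not covered by Theorem \ref{thm-independent} or Proposition \ref{pro-exdistribution}. (For instance, if $A_t=\int_0^t f(X_s)\,ds$ then $E^h_y[A_{\delta_n}]=\frac{1}{h(y)}\int_0^{\delta_n}e^{-\alpha s}Q_s(hf)(y)\,ds$, which involves $h$ in a non-cancelling way.) Trying to shift the weight to $1/h(X_{t^n_{k+1}})$ instead introduces the spurious ratio $h(X_{t^n_{k+1}})/h(X_{t^n_k})$. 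So the "direct transfer of Theorem \ref{thm-independent} through the linear structure of \eqref{QxA}" does not, by itself, establish $h$-independence.

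The paper resolves this by approximating $A$ not through Riemann sums in $t$ but through the Revuz side: after localizing $\mu\in\mathcal{S}$ to $\mu\in\mathcal{S}_0$ via the nest $(K_n)$, it revisits the construction of Lemma \ref{them-RevuzS0} and shows that $A$ is the limit of absolutely continuous additive functionals $t\mapsto\int_0^t f_{k_i}(X_s)\,ds$ whose densities $f_{k_i}=\frac1k\sum g_{n_l}$ are built \emph{solely} from the kernels $Q_t$ and the potential $U_\beta\mu$, hence are $h$-independent Borel functions on $E$. Running the \emph{same} approximating sequence through the $h'$-construction produces an $\mathcal{O}$-PCAF $A'$ which the paper shows is $\mathcal{O}$-equivalent to $A$; and for the absolutely continuous approximants the optional measure on a predictable rectangle equals $\int_0^\infty\overline{Q}_{\nu,u}\big(I_F\,I_{[s\wedge T_k,\,t\wedge T_k)}(u)\,f_{k_i}(X_u)\big)\,du$, which is manifestly $h$-free by Theorem \ref{thm-independent}. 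The monotone-class passage from predictable rectangles to $\mathcal{P}$, then to $[[T,\infty[[$ and to all of $\mathcal{O}$, is then the clean bookkeeping step — it is \emph{not} where the $h$-independence is manufactured, which is the part your proposal left unjustified. You would need to supply the $\mathcal{S}_0$-reduction and the absolutely-continuous approximation to close the gap; the dyadic Riemann approximation of $\int f\,dA$ alone does not suffice.
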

\begin{proof}
Let $A$ be an $\mathcal{O}$-PCAF. Then by Theorem \ref{thm-revuz cor} there exists $\mu \in \mathcal{S}$ satisfying (\ref{eq: revuz cor}). We assume first that $\mu\in \mathcal{S}_0.$ Then by Lemma \ref{lem-RevizS0} $A$ and $\mu$ satisfy (\ref{RevuzS0}) w.r.t. $h$. Suppose that $h'$ is another strictly positive $\mathcal{E}$-quasi-continuous $\alpha$-excessive function. Following the procedure of Lemma \ref{them-RevuzS0} we may construct another $\mathcal{O}$-PCAF $A'$ such that $A'$ and $\mu$ satisfy (\ref{RevuzS0}) w.r.t. $h'$.  More precisely,  let $g_n$ be the same as specified by (\ref{gn}), then
(\ref{Rgn}) holds also true when $h$ is replaced by $h'$. Moreover, we can choose the same subsequence $\{n_l\}$ such that $(R^{h'}_{\beta-\alpha}\frac{f_k}{h'})_{k\in\mathds{N}}$ with $f_k:=\frac{1}{k}\sum_{l=1}^{k}g_{n_l}$ converges  strongly in $(\mathcal{E}^{h'},D(\mathcal{E}^{h'}))$,
and take the same subsequence $k_i$ such that
\begin{align*}
 \mathcal{E}^{h'}_{\beta-\alpha}&(R^{h'}_{\beta-\alpha}\frac{f_{k_{i+1}}-f_{k_{i}}}
 {h'}, R^{h'}_{\beta-\alpha}\frac{f_{k_{i+1}}-f_{k_{i}}}{h'})= \\ & \mathcal{E}^h_{\beta-\alpha}(R^h_{\beta-\alpha}\frac{f_{k_{i+1}}-f_{k_{i}}}{h},
 R^h_{\beta-\alpha}\frac{f_{k_{i+1}}-f_{k_{i}}}{h})<2^{-6i}.
\end{align*}
Repeating the construction in Lemma \ref{them-RevuzS0}, we denote
\begin{equation}\label{A'htk}
\tilde{A'}^{h'}_k(t,\omega):=\int^t_0e^{-(\beta-\alpha)s}\frac{f_k}{h'}(X_s)ds.
\end{equation}
Define
\begin{align*}\label{Gammat}
\Gamma'_t=\bigcap_{m\geq 1}\bigcup_{l\geq m} \{\omega \in \Omega~|~  \tilde{A'}^{h'}_{k_i}(u,\omega) \ &\mbox{converges uniformly}\\
&\mbox{for} \ u\in [0,t+\frac{1}{l}]~ \mbox{when}~ i \rightarrow \infty\},
\end{align*}
\begin{align*}
\tilde{\Gamma'}= \{(t,\omega)~| ~t \in \mathds{R}_+ ,~ \omega \in \Gamma'_t\},~~~\tau_{\tilde{\Gamma'}}(\omega)=\inf \{t\geq0|~(t,\omega) \notin \tilde{\Gamma'}\},
\end{align*}
and
\begin{equation*}
\Gamma':=\tilde{\Gamma'} \cup~\{(t, \omega)~|~ t \geq \zeta(\omega),~ \tau_{\tilde{\Gamma'}}(\omega)\geq  \zeta(\omega)\}.
\end{equation*}

Let $\tau_{\Gamma'}(\omega):=\inf\{t\geq0~|~ (t, \omega) \notin \Gamma'\}$
and define
\begin{equation}\label{def-tildeA'}
\tilde{A}'_{t}(\omega):=\left\{
\begin{array}{ll}
\lim\limits_{i\to+\infty}\tilde{A'}^{h'}_{k_i}(t,\omega), &0\leq t<\tau_{\tilde{\Gamma'}}(\omega),\\
\tilde{A'}_{\tau_{\tilde{\Gamma'}}}(\omega)_{-}, & t\geq \tau_{\tilde{\Gamma'}}(\omega)>0,\\
0,      & \tau_{\Gamma'}(\omega)=0 .
\end{array}
\right.
\end{equation}
Define further
\begin{equation}\label{def-A'}
A'_{t}:=
\int^t_0e^{(\beta-\alpha)u}h'(X_u)d\tilde{A}'_u, ~~\forall ~ t\geq 0.
\end{equation}
Then $A'$ is an $\mathcal{O}$-PCAF with defining set $\Gamma'$ satisfying (\ref{RevuzS0}) with $h$ replaced by $h'$.

We claim that $A'$ and $A$ are in fact
$\mathcal{O}$-equivalent. To verify this we define  $\tilde{\Gamma}=\Gamma\cap\Gamma'$
 and $\tau_{\tilde{\Gamma}}(\omega):=\inf\{t\geq0~|~ (t, \omega) \notin \tilde{\Gamma}\}.$  Let $t<\tau_{\tilde{\Gamma}}(\omega)$, then both  $\tilde{A'}^{h'}_{k_i}(s,\omega)$ and $\tilde{A}^h_{k_i}(s,\omega)$ converge uniformly for $s \in [0,t]$. Comparing (\ref{Ahtk}) and (\ref{A'htk}), we have $$\tilde{A'}^{h'}_{k_i}(s,\omega)=\int^s_0\frac{h}{h'}(X_u)d\tilde{A}^h_{k_i}
(u,\omega),~~\forall~s<\tau_{\tilde{\Gamma}}(\omega).$$
Consequently, for any $T>0$ we have
  $$\int^t_0h'(X_u)I_{\{h(X_u)<T,~ h'(X_u)< T\}}d\tilde{A}_u'(\omega)
=\int^t_0h(X_u)I_{\{h(X_u)<T,~ h'(X_u)< T\}}d\tilde{A}_u(\omega).$$
Letting $T\rightarrow \infty,$ we get $A'_t(\omega)=A_t(\omega)$ for  $t<\tau_{\tilde{\Gamma}}(\omega).$ Therefore, $A'$ and $A$ are $\mathcal{O}$-equivalent.

Let $\mathbb{Q}_{x}^{'A'}(\cdot)$ be the optional measure constructed with  $A'$ and $h'$. We are going to show that $\mathbb{Q}_{x}^{'A'}(\cdot)$ and $\mathbb{Q}_{x}^{A}(\cdot)$ are identical. To this end we  take an  $\mathcal{E}$-nest $(F_k)_{k\geq 1}$  consisting of compact sets such that $h\in C(\{F_k\})$ and  $h'\in C(\{F_k\}).$  Let $\tau_{F_k}=\inf\{t\geq 0~|~ X_t\notin F_k\}$ and $\tau_k=\inf\{t\geq0: A_t> k ~\mbox{or}~ A'_t> k\}.$ Define $T_k=\tau_{F_k}\wedge\tau_k\wedge\tau_{\Gamma}\wedge\tau_{\Gamma'}
\wedge k, $ where $\tau_{\Gamma}$  and $\tau_{\Gamma'}$ are specified by  Definition \ref{def-PCAF2} (c).   Then, because both $h$ and $h'$ are strictly positive and finite on each compact set $F_k,$ and $\nu(F_k)<\infty$ for all $\nu\in \mathcal{S}_0$ and $k\geq 1,$ we have $\mathbb{Q}_{\nu}^{'A'}([[0,~T_k[[)<\infty$ and $\mathbb{Q}_{\nu}^{A}([[0,~T_k[[)<\infty$ for all $\nu\in \mathcal{S}_0$ and $k\geq 1.$

Suppose that $H=[s,t)\times F$ with $F\in \mathcal{M}_{s-}$ and $s,t\in \mathds{Q}_+,~0<s<t<\infty.$  For fixed $\nu\in \mathcal{S}_0$ and fixed $k\geq 1,$ we have by (\ref{def-A}),(\ref{def-tildeA}),(\ref{Ahtk}) and (\ref{eq: relation Mmu and F}),
\begin{align*}
\mathbb{Q}^{A}_{\nu}(H\cap[[0,T_k[[)&=E_{h\cdot\nu}^h[\int_0^{\infty}I_F
I_{[[s\wedge T_k,t\wedge T_k))}(u)\frac{e^{\alpha u} I_{\{u<\zeta\}}}{h(X_u)}dA_u]\\
&=E_{h\cdot\nu}^h[\int_0^{\infty}I_F
I_{[[s\wedge T_k,t\wedge T_k))}(u)e^{\beta u} I_{\{u<\zeta\}}d\tilde{A}_u]\\
&=\lim_{i\rightarrow \infty}E_{h\cdot\nu}^h[\int_0^{\infty}I_F
I_{[[s\wedge T_k,t\wedge T_k))}(u)e^{\alpha u} I_{\{u<\zeta\}}\frac{f_{k_i}(X_u)}{h(X_u)}du]\\
&=\lim_{i\rightarrow \infty}\int^{+\infty}_0\overline{Q}_{\nu,u}(I_F
I_{[[s\wedge T_k,t\wedge T_k))}(u)f_{k_i}(X_u))du.
\end{align*}
On the other hand, by (\ref{def-A'}),(\ref{def-tildeA'}),(\ref{A'htk}) and (\ref{eq: relation Mmu and F}), we get
\begin{align*}
\mathbb{Q}^{'A'}_{\nu}(H\cap[[0,T_k[[)&=E_{h'\cdot\nu}^{h'}[\int_0^{\infty}I_F
I_{[[s\wedge T_k,t\wedge T_k))}(u)\frac{e^{\alpha u} I_{\{u<\zeta\}}}{h'(X_u)}dA'_u]\\
&=E_{h'\cdot\nu}^{h'}[\int_0^{\infty}I_F
I_{[[s\wedge T_k,t\wedge T_k))}(u)e^{\beta u} I_{\{u<\zeta\}}d\tilde{A'}_u]\\
&=\lim_{i\rightarrow \infty}E_{h'\cdot\nu}^{h'}[\int_0^{\infty}I_F
I_{[[s\wedge T_k,t\wedge T_k))}(u)e^{\alpha u} I_{\{u<\zeta\}}\frac{f_{k_i}(X_u)}{h'(X_u)}du]\\
&=\lim_{i\rightarrow \infty}\int^{+\infty}_0\overline{Q}_{\nu,u}(I_F
I_{[[s\wedge T_k,t\wedge T_k))}(u)f_{k_i}(X_u))du.
\end{align*}
Therefore, for all $F\in \mathcal{M}_{s-}$ and $s,t\in \mathds{Q}_+,~0<s<t<\infty,$ it holds that
\begin{equation}\label{stF}
\mathbb{Q}^{'A'}_{\nu}(([s,t)\times F)\cap[[0,T_k[[)=\mathbb{Q}^{A}_{\nu}(([s,t)\times F)\cap[[0,T_k[[).
 \end{equation}
 It is trivial that
 \begin{equation}\label{0F}
 \mathbb{Q}^{'A'}_{\nu}((\{0\}\times F)\cap[[0,T_k[[)=\mathbb{Q}^{A}_{\nu}((\{0\}\times F)\cap[[0,T_k[[)=0, ~\forall~F\in \mathcal{M}_{0}.
\end{equation}

Applying monotone class argument, by (\ref{mathcalP}) we get $\mathbb{Q}^{'A'}_{\nu}(H\cap[[0,T_k[[)=\mathbb{Q}^{A}_{\nu}(H\cap[[0,T_k[[)$ for all $H\in \mathcal{P}.$
Therefore,  for all $T\in \mathcal{T}$ we have
\begin{align*}
\mathbb{Q}^{'A'}_{\nu}([[T,\infty[[\cap[[0,T_k[[)&=
\mathbb{Q}^{'A'}_{\nu}(]]T,\infty[[\cap[[0,T_k[[)\\
&=\mathbb{Q}^{A}_{\nu}(]]T,\infty[[\cap[[0,T_k[[)=
\mathbb{Q}^{A}_{\nu}([[T,\infty[[\cap[[0,T_k[[).
\end{align*}
Applying monotone class argument again, by (\ref{defoptionalfieldall}) we get
\begin{equation}\label{H0Tk}
\mathbb{Q}^{'A'}_{\nu}(H\cap[[0,T_k[[)=\mathbb{Q}^{A}_{\nu}(H\cap[[0,T_k[[) ,~~\forall ~H\in \mathcal{O}.
\end{equation}

Denote by $T_{\infty}=\lim_{k\rightarrow \infty}T_{k}.$  Similar to the argument of  \cite[IV. Proposition 5.30 (i)]{MR92}, we can show that  there exists an $\mathcal{E}$-exceptional set $N$, such that for any $x\in E\setminus N$, $P_x^{h}(\lim_{k\rightarrow \infty}\tau_{F_k}<\zeta)=0$. By Definition \ref{def-PCAF2} (d),  we have $\lim_{k\rightarrow \infty}\tau_{k}\geq \tau_{\Gamma}\wedge \tau_{\Gamma'}\wedge \zeta.$
From the proof of Proposition \ref{classic}, we see that $E^h_x[I_{\{ \tau_{\Gamma}<\zeta\}}]=0$ and $E^{h'}_x[I_{\{ \tau_{\Gamma'}<\zeta\}}]=0$ for q.e. $x\in E.$ Making use of the above  facts, we can show that
\begin{equation}\label{T-infty}
E^h_{\nu}[I_{\{T_{\infty}< \zeta\}}]=E^{h'}_{\nu}[I_{\{T_{\infty}< \zeta\}}]=0,~~ \forall~ \nu\in \mathcal{S}_0.
\end{equation}
Therefore, for any $H\in \mathcal{O}$ we have
\begin{align}\label{limTk}
\lim_{k\rightarrow \infty}\mathbb{Q}^{A}_{\nu}(H\cap[[0,T_k[[)&=
\lim_{k\rightarrow \infty}E_{h\cdot\nu}^h[I_{\{T_{\infty} \geq \zeta\}}\int_0^{T_{k}}I_H(u,\cdot)\frac{e^{\alpha u} I_{\{u<\zeta\}}}{h(X_u)}dA_u] \nonumber\\
=&E_{h\cdot\nu}^h[I_{\{T_{\infty} \geq \zeta\}}\int_0^{\infty}I_H(u,\cdot)\frac{e^{\alpha u} I_{\{u<\zeta\}}}{h(X_u)}dA_u]=\mathbb{Q}^{A}_{\nu}(H).
\end{align}
Similarly, for any $H\in \mathcal{O}$ we have
\begin{equation}\label{limTk'}
\lim_{k\rightarrow \infty}\mathbb{Q}^{'A'}_{\nu}(H\cap[[0,T_k[[)=\mathbb{Q}^{'A'}_{\nu}(H).
\end{equation}
Consequently by (\ref{H0Tk}) we conclude that $\mathbb{Q}^{A}_{\nu}(\cdot)=\mathbb{Q}^{'A'}_{\nu}(\cdot)$ for all $\nu\in \mathcal{S}_0$.

 Note that $E$ is a Lusin space, therefore  $\mathcal{B}(E)$ is countably generated which implies that $\mathcal{F}^0_s$ is countably generated for any $s\in \mathds{Q}_+.$ Then for each $s,t\in \mathds{Q}_+,~0<s<t<\infty,$ by what we have proved we may take an $\mathcal{E}$-exceptional set $N_{st}$ such that for all $x\in E\setminus N_{st}$ and $F\in \mathcal{F}^0_r,~r<s,$ it holds that
\begin{equation}\label{xstF}
\mathbb{Q}^{'A'}_{x}(([s,t)\times F)\cap[[0,T_k[[)=\mathbb{Q}^{A}_{x}(([s,t)\times F)\cap[[0,T_k[[).
 \end{equation}
 By virtue of Lemma \ref{thm-relation M and F} (i), we see that in fact (\ref{xstF}) holds for all $F\in\mathcal{M}_r,~r<s,$ and hence it is true for all $F\in\mathcal{M}_{s-}.$  Because (\ref{0F}) is true for any $x\in E,$ therefore if we set $N_1=\bigcup_{s,t\in \mathds{Q}_+,s<t} N_{st},$ then applying twice monotone class arguments we can get
 \begin{equation}\label{xH0Tk}
\mathbb{Q}^{'A'}_{x}(H\cap[[0,T_k[[)=\mathbb{Q}^{A}_{x}(H\cap[[0,T_k[[) ,~~\forall ~H\in \mathcal{O},~x\in E\setminus N_1.
\end{equation}
By (\ref{T-infty}), we can take an $\mathcal{E}$-exceptional set $N_2$ such that
\begin{equation}\label{xT-infty}
E^h_{x}[I_{\{T_{\infty}< \zeta\}}]=E^{h'}_{x}[I_{\{T_{\infty}< \zeta\}}]=0,~~ \forall~ x\in E\setminus N_2.
\end{equation}
Following the arguments of (\ref{limTk}) and (\ref{limTk'}), we get
$\mathbb{Q}^{A}_{x}(\cdot)=\mathbb{Q}^{'A'}_{x}(\cdot)$ for all $x\in E\setminus (N_1\cup N_2).$  Because $A$ and $A'$ are $\mathcal{O}$-equivalent,  by Lemma \ref{QxB} there exists an $\mathcal{E}$-exceptional set $N_3$ such that $\mathbb{Q}^{'A}_{x}(\cdot)=\mathbb{Q}^{'A'}_{x}(\cdot)$ for all $x\in E\setminus N_3.$  We now define $N=N_1\cup N_2\cup N_3,$ then $N$ is an $\mathcal{E}$-exceptional set and $\mathbb{Q}^{A}_{x}(\cdot)=\mathbb{Q}^{'A'}_{x}(\cdot)
=\mathbb{Q}^{'A}_{x}(\cdot)$ for all $x\in E\setminus N.$ Thus the theorem is proved  in the case that $\mu \in \mathcal{S}_0$ where $\mu$ is Revuz corresponding to $A$ by (\ref{eq: revuz cor}) w.r.t. $h$.

We now extend the above results to the general situation. Let $A$ be an $\mathcal{O}$-PCAF. Suppose that $\mathbb{Q}^{A}_{x}(\cdot)$ is defined by (\ref{QxA}) with some $h\in \mathcal{H}$ and $\mathbb{Q}^{'A}_{x}(\cdot)$ is defined by (\ref{QxA}) with another $h'\in \mathcal{H}.$  Suppose that $\mu \in \mathcal{S}$ Revuz corresponding to $A$ by (\ref{eq: revuz cor}) w.r.t. $h$.  We take an $\mathcal{E}$-nest $(K_n)_{n\geq1}$ consisting of compact sets such that $I_{K_n}\cdot\mu\in\mathcal{S}_0$ for each $n\geq1$.  Then $I_{K_n}\cdot A$ is Revuz corresponding to $I_{K_n}\cdot \mu$ w.r.t. $h$.  By what we have proved, for each $n\geq 1$, there exists an $\mathcal{E}$-exceptional set $N_n$ such that $\mathbb{Q}^{'I_{K_n}\cdot A}_{x}(\cdot)=\mathbb{Q}^{I_{K_n}\cdot A}_{x}(\cdot)$ for  $x\in E\setminus N_n.$  Let $N=\cup_{n\geq 1}N_n.$  Then for $x\in E\setminus N$ we have
$$\mathbb{Q}^{'A}_{x}(H)=\lim_{n\rightarrow \infty}\mathbb{Q}^{'I_{K_n}\cdot A}_{x}(H)=\lim_{n\rightarrow \infty}\mathbb{Q}^{I_{K_n}\cdot A}_{x}(H)=\mathbb{Q}^{A}_{x}(H), ~\forall ~ H\in \mathcal{O}.$$
Therefore, Theorem \ref{Qindependenth}  is true in general case.
\end{proof}

\begin{corollary}\label{Revuzindependence}
The Revuz correspondence specified by (i) and (ii) of Theorem \ref{thm-revuz cor} is independent of $h\in \mathcal{H}.$
\end{corollary}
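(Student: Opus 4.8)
The plan is to reduce the statement to Theorem~\ref{Qindependenth} by rewriting the left-hand side of the Revuz characterisation (\ref{eq: revuz cor}) in terms of the optional measure $\mathbb{Q}^{A}_{x}(\cdot)$, which has just been shown to be independent of $h$. Throughout, fix $h,h'\in\mathcal{H}$ and an $\mathcal{O}$-PCAF $A$; write $U^{\beta}_{A}$ (resp. $U^{\prime\beta}_{A}$) for the operator (\ref{UbetaA}) formed with $h$ (resp. $h'$), and $\mathbb{Q}^{A}_{x}$ (resp. $\mathbb{Q}^{\prime A}_{x}$) for the optional measures (\ref{QxA}) formed with $h$ (resp. $h'$).

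First I would observe that, since the canonical process $(X_{t})_{t\ge 0}$ is cadlag and $\{\mathcal{M}_{t}\}$-adapted (cf. Proposition~\ref{pro-X_t measureble}), it is an optional process, so that $(t,\omega)\mapsto X_{t}(\omega)$ is $\mathcal{O}/\mathcal{B}(E_{\Delta})$-measurable; hence for every bounded $f\in p\mathcal{B}(E)$ and every $\beta>\alpha$ the process $\phi_{\beta,f}(t,\omega):=e^{-\beta t}f(X_{t}(\omega))$ is a bounded nonnegative optional process that does not involve $h$. Extending $\mathbb{Q}^{A}_{x}(\cdot)$ from sets in $\mathcal{O}$ to nonnegative optional functions in the usual way, a direct comparison of (\ref{QxA}) and (\ref{UbetaA}) gives, for every $x\in E$,
\[
\mathbb{Q}^{A}_{x}(\phi_{\beta,f})
=h(x)E^{h}_{x}\!\left[\int_{0}^{\infty}e^{-(\beta-\alpha)t}\tfrac{f}{h}(X_{t})\,I_{\{t<\zeta\}}\,dA_{t}\right]
=U^{\beta}_{A}f(x),
\]
and likewise $\mathbb{Q}^{\prime A}_{x}(\phi_{\beta,f})=U^{\prime\beta}_{A}f(x)$. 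By Theorem~\ref{Qindependenth}(i) there is an $\mathcal{E}$-exceptional set $N$, depending only on $A$ (and on $h,h'$), with $\mathbb{Q}^{A}_{x}(\cdot)=\mathbb{Q}^{\prime A}_{x}(\cdot)$ on all of $\mathcal{O}$ for every $x\in E\setminus N$; applying this to the integrands $\phi_{\beta,f}$ yields $U^{\beta}_{A}f(x)=U^{\prime\beta}_{A}f(x)$ for all $x\in E\setminus N$, all $\beta>\alpha$, and all bounded $f\in p\mathcal{B}(E)$. Since $N$ is $\mathcal{E}$-exceptional, hence $m$-null, we conclude $U^{\beta}_{A}f=U^{\prime\beta}_{A}f$ $m$-a.e.

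Next I would feed this into (\ref{eq: revuz cor}). For a $\gamma$-coexcessive ($\gamma>\alpha$) function $g\in D(\mathcal{E})$ --- an object intrinsic to $(\mathcal{E},D(\mathcal{E}))$ and thus unrelated to the choice of $h$ --- and any bounded $f\in p\mathcal{B}(E)$, the above $m$-a.e. identity gives $\beta(g,U^{\beta+\gamma}_{A}f)_{m}=\beta(g,U^{\prime(\beta+\gamma)}_{A}f)_{m}$ for every $\beta>0$, so the limits as $\beta\to\infty$ agree. Hence the smooth measure $\mu_{A}$ that Theorem~\ref{thm-revuz cor}(i) attaches to $A$ through $\lim_{\beta}\beta(g,U^{\beta+\gamma}_{A}f)_{m}=\int_{E}fg\,d\mu_{A}$ produces the same value of $\int_{E}fg\,d\mu_{A}$ whether the left side is built with $h$ or with $h'$; since these integrals, as $f$ ranges over bounded nonnegative Borel functions and $g$ over (strictly positive) $\gamma$-coexcessive functions in $D(\mathcal{E})$, determine $\mu_{A}$ uniquely, $\mu_{A}$ is independent of $h$. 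This settles the $h$-independence of the correspondence of Theorem~\ref{thm-revuz cor}(i). Part~(ii) then follows formally: given $\mu\in\mathcal{S}$, let $A$ be an $\mathcal{O}$-PCAF with Revuz measure $\mu$ relative to $h$ (Theorem~\ref{thm-revuz cor}(ii)); by the $h$-independence just proved, $\mu$ is also the Revuz measure of $A$ relative to $h'$, and by the uniqueness part of Theorem~\ref{thm-revuz cor}(ii) applied with $h'$, $A$ coincides up to $\mathcal{O}$-equivalence with the $\mathcal{O}$-PCAF produced from $\mu$ via $h'$. Thus the map $\mu\mapsto A$ is $h$-free as well.

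As for the difficulty: essentially all the analytic content is already absorbed into Theorem~\ref{Qindependenth}, and what remains here is bookkeeping --- the (routine) measurability of $\phi_{\beta,f}$ as an optional integrand, the identity $\mathbb{Q}^{A}_{\cdot}(\phi_{\beta,f})=U^{\beta}_{A}f$, and the observation that the exceptional set furnished by Theorem~\ref{Qindependenth} is uniform in $f$ and $\beta$, which it is since that theorem asserts equality of the two measures on the whole of $\mathcal{O}$. I do not anticipate a genuine obstacle beyond this.
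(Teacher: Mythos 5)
Your proof is correct and follows essentially the same route as the paper: rewrite $U^{\beta}_{A}f(x)$ as $\mathbb{Q}^{A}_{x}(H)$ with $H(t)=e^{-\beta t}f(X_{t})$ an $h$-free optional integrand, invoke Theorem~\ref{Qindependenth} to get $\mathbb{Q}^{A}_{x}=\mathbb{Q}^{\prime A}_{x}$ off an $\mathcal{E}$-exceptional set, and conclude $U^{\beta}_{A}f=U^{\prime\beta}_{A}f$ q.e., hence the Revuz characterisation (\ref{eq: revuz cor}) and, by uniqueness, the whole correspondence are $h$-independent. You have simply spelled out the details (optionality of $\phi_{\beta,f}$, uniformity of the exceptional set, and the formal deduction for part (ii)) that the paper leaves implicit.
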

\begin{proof}
We denote by $U^{'\beta}_A$ the formula (\ref{UbetaA}) with $h$ replaced by $h'$. For any $f\in \mathcal{B}(E)^+$, if we set $H(t) = e^{-\beta t}f(X_t),$
then,
\begin{align*}
U^{\beta}_Af(x)&=h(x)E^h_x[\int^{+\infty}_0e^{-(\beta-\alpha)t}
\frac{f}{h}(X_t)I_{(t<\zeta)}dA_t]\\
&=\mathbb{Q}^{A}_{x}(H)=\mathbb{Q}^{'A}_{x}(H)=U^{'\beta}_Af(x) ~q.e. \  x\in E.
\end{align*}
Therefore, the corollary is true.
\end{proof}

\section*{Akonowledgments}
We are indebted to M. Fukushima who brought our attention to the work
 of pseudo Hunt processes introduced in \cite{Oshima13}, which stimulated this research. We are grateful to Y. Oshima who sent us his manuscript \cite{Oshima13} which helps this research. We thank Mufa Chen, Zhenqing Chen and Michael Roeckner for their comments and discussions. The initial idea of this research was mentioned at the Workshop on Probability Theory with Applications (Dec 19-21, 2015, Macau), and the main results of this research were announced at the International Conference on Stochastic Partial Differential Equations and Related Fields (October 10-14, 2016, Bielefeld), we would like to thank the organizers of the above two stimulating conferences for their kind invitation and hospitality.

\end{document}